\newtheoremstyle{theoremstyle}
  {10pt}      
  {5pt}       
  {\itshape}  
  {}          
  {\bfseries} 
  {:}         
  {.5em}      
  {}          
\newtheoremstyle{examplestyle}
  {10pt}      
  {5pt}       
  {}          
  {}          
  {\bfseries} 
  {:}         
  {.5em}      
  {}          
\theoremstyle{theoremstyle}
\newtheorem{theorem}{Theorem}[section]
\newtheorem*{theorem*}{Theorem}
\newtheorem{lemma}[theorem]{Lemma}
\newtheorem*{proposition*}{Proposition}
\newtheorem{corollary}[theorem]{Corollary}
\newtheorem*{corollary*}{Corollary}
\newtheorem*{question*}{Question}
\theoremstyle{examplestyle}
\newtheorem{definition*}{Definition}
\newtheorem{remark}[theorem]{Remark}
\newtheorem{remark*}{Remark}
\newtheorem{convention*}{Convention}
\newtheorem{notation*}{Notation}
\newtheorem*{tha*}{Theorem A}
\newtheorem*{thb*}{Theorem B}
\newtheorem*{thc*}{Theorem C}
\newtheorem*{thd*}{Theorem D}
\newcommand{\comment}[1]{}
\newcommand{\nor}{\mathrm{nor}}
\newcommand{\N}{\mathbb{N}}
\newcommand{\Z}{\mathbb{Z}}
\newcommand{\R}{\mathbb{R}}
\newcommand{\Sph}{\mathbb{S}}
\DeclareMathOperator{\tr}{trace}
\newcommand{\tp}{^{\mathrm{T}}}
\newcommand{\bmat}{\left(\begin{smallmatrix}}
\newcommand{\emat}{\end{smallmatrix}\right)}
\newcommand{\SO}{\mathrm{SO}}
\newcommand{\SU}{\mathrm{SU}}
\newcommand{\Gtwo}{\mathrm{G}_2}
\begin{document}

\title[Harmonic self-maps of $\SU(3)$]{Harmonic self-maps of $\SU(3)$}
\author{Anna Siffert$^1$}%
\subjclass[2010]{Primary 58E20; Secondary 34B15, 55M25}%
\address{Max-Planck-Institut f\"ur Mathematik\\
 Vivatsgasse 7\\
53111 Bonn\\
Germany}
\email{siffert@mpim-bonn.mpg.de}

\begin{abstract}
By constructing solutions of a singular boundary value problem we prove the existence of a countably infinite family of harmonic self-maps of $\SU(3)$ with non-trivial, i.e. $\neq 0,\pm 1$, Brouwer degree.

\end{abstract}

\maketitle
\section*{Introduction}

\thispagestyle{empty}
\footnotetext[1]{The author would like to thank Deutsche Forschungsgemeinschaft for supporting this work with the grant SI 2077/1-1.
Furthermore, I would like to thank the Max Planck Institute for Mathematics for the support and the excellent working conditions.}
The energy of a smooth map $\varphi:M\rightarrow N$ between two Riemannian manifolds $(M,g)$ and $(N,h)$ is defined by
$$E(\varphi)=\int_M\lvert d\varphi\rvert^2\omega_g,$$
where $\omega_g$ denotes the volume measure on $M$.
A smooth map is called harmonic if it is a critical point of the energy functional, i.e., satisfies the Euler-Lagrange equations 
\begin{align*}
\tau(\varphi)=0,
\end{align*}
where $\tau(\varphi):=\mbox{trace}\nabla d\varphi$ is the so-called tension field of $\varphi$.
Finding solutions of this elliptic, semi-linear partial differential equation of second order is difficult in general. 

\medskip

By imposing suitable symmetry conditions, the Euler-Lagrange equations sometimes reduce to ordinary differential equations. 
This is the case for the following situation which is dealt with in this paper:
we consider
the cohomogeneity one action
\begin{align*}
\SU(3)\times\SU(3)\rightarrow\SU(3),\hspace{1cm} (A,B)\mapsto ABA^{\tp}
\end{align*}
of $G=\SU(3)$ on itself, whose principal isotropy group is given by $H=\SO(2)$. 
For any smooth map $r: ]0,\pi/2[\rightarrow\R$ we define the map
\begin{align*}
\psi_r:G/H\times ]0,\pi/2[\rightarrow G/H\times\R,\hspace{1cm} (gH,t)\rightarrow (gH,r(t)),
\end{align*}
which is equivariant with respect to the above action.
For these maps the Euler-Lagrange equations of the energy functional reduce to
\begin{align*}
\ddot r(t)=-\csc^22t\left(2\sin4t\cdot\dot r(t)+4\sin^2t\cdot\sin2r(t)-8\cos^3t\cdot\sin r(t)\right).
\end{align*}
We prove that each solution of this ordinary differential equation which satisfies
$r(0)=0$ and $r(\tfrac{\pi}{2})=(2\ell+1)\tfrac{\pi}{2}, \ell\in\Z$, yields a harmonic self-map of $\SU(3)$.
The above ordinary differential equation and boundary value problem are henceforth referred to as ODE and BVP, respectively. 

\smallskip

The goal of this paper is the construction of solutions of the BVP and the examination of their properties. Thereby we construct and examine harmonic 
self-maps of $\SU(3)$.

\subsection*{Brouwer degree}
The Brouwer degree of $\psi_r$ is determined in terms of $\ell$ only: from Theorem\,3.4 in \cite{puttmann} we deduce that for any solution $r$ of the BVP with $r(\tfrac{\pi}{2})=(2\ell+1)\tfrac{\pi}{2}$ the Brouwer degree of $\psi_r$ is given by
$\mbox{deg}\,\psi_{r}=2\ell+1$. 

\smallskip 

By an intricate examination of the BVP
we find restrictions for $\ell$ and thus for the possible Brouwer degrees of $\psi_r$. 

\begin{tha*}
\label{tea}
For each solution $r$ of the BVP we have $\mbox{deg}\,\psi_{r}\in\lbrace\pm 1, \pm 3, \pm5,\pm 7\rbrace$.
\end{tha*}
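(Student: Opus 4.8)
The plan is to reduce Theorem A to a quantitative statement about the ODE by combining the rigidity of its behaviour near the singular endpoints $t=0$ and $t=\pi/2$ with a monotonicity-type estimate on the interior. First I would analyse the singular BVP at $t=0$: writing the ODE in the form $\ddot r=-\csc^2 2t\,(2\sin 4t\,\dot r+\dots)$ and expanding the coefficients in Taylor series, one sees that $t=0$ is a regular singular point, so a solution with $r(0)=0$ is governed (to leading order) by the indicial equation, forcing $r(t)=c_1 t+O(t^2)$ for some slope $c_1>0$ (the case $c_1=0$ giving the trivial solution, which we discard, and $c_1<0$ being symmetric). The analogous local analysis at $t=\pi/2$, using the substitution $s=\pi/2-t$ and the boundary condition $r(\pi/2)=(2\ell+1)\pi/2$, pins down the admissible local behaviour there. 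The output of this step is that solutions of the BVP form, for each $\ell$, at most a discrete family parametrised by the initial slope $c_1$.

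Next I would set up a shooting/comparison argument on the interior interval. The idea is to track a suitable ``winding'' or ``phase'' quantity associated with $r$ — morally, how many times $r$ crosses the odd multiples of $\pi/2$, or equivalently an Abresch–Brezis–Coron–type monotone functional — and to bound its total variation over $]0,\pi/2[$ using the explicit structure of the potential term $4\sin^2 t\,\sin 2r-8\cos^3 t\,\sin r$. Concretely, one can test the equation against an appropriate weight: multiplying by a carefully chosen function of $t$ and integrating, the singular factor $\csc^2 2t$ and the $\dot r$-term can be arranged into an exact derivative, leaving an integral inequality that forces the oscillation of $r$ to be controlled. Since $\deg\psi_r=2\ell+1$ and $r$ moves from $0$ to $(2\ell+1)\pi/2$, bounding the oscillation bounds $|\ell|$; the numerics $\ell\in\{-4,\dots,3\}$ (i.e. degree in $\{\pm1,\pm3,\pm5,\pm7\}$) should fall out of making the constants in this inequality sharp.

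The main obstacle, I expect, is the interior estimate: getting a bound on $|\ell|$ that is tight enough to give exactly $7$ (and not, say, $9$ or $11$) as the largest admissible degree. A crude energy or maximum-principle argument will likely overshoot, so the delicate part is choosing the right test function / monotone functional adapted to the specific coefficients $\sin 4t$, $\sin^2 t$, $\cos^3 t$ appearing in the ODE, and then carefully estimating the resulting weighted integrals — possibly splitting $]0,\pi/2[$ into subintervals where different terms dominate and patching the estimates together. A secondary technical point is justifying the limiting behaviour at both singular endpoints rigorously (existence and regularity of the local solution branches, and continuous dependence on $c_1$), so that the shooting argument is well-posed; this is where the regular-singular-point analysis from the first step is used in earnest.
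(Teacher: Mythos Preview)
Your proposal is a plausible-sounding outline, but it is not a proof, and at the level of actual mechanism it diverges substantially from what the paper does. Two points are worth flagging.

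First, the concrete apparatus is missing. You speak of ``a suitable winding/phase quantity'', ``an appropriate weight'', and say the bound ``should fall out of making the constants sharp'', but you never produce the functional or the inequality. The paper's argument is much more explicit and rather different in spirit: after the change of variable $x=\log\tan t$, it builds two Lyapunov functions (one for large positive $x$, one for large negative $x$) of the form $W(x)=\tfrac12 r'(x)^2+h(x,r(x))$ and shows they are monotone along solutions. Monotonicity is then leveraged \emph{pointwise}, not via an integrated test-function identity: it gives explicit upper bounds on $|r'(x)|$ in terms of the elementary functions $f,g,i$ of $x$ alone, and shows that once $r$ crosses certain levels with the wrong sign of $r'$ it must run off to $\pm\infty$. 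This pins down two finite thresholds $d^{-}<0<d^{+}$ such that $|r(x)|<2\pi$ for $x\le d^{-}$ and such that the value $r(d^{+})$ alone determines which odd multiple of $\pi/2$ can be the limit.

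Second, the final numerical bound is not obtained by ``making constants sharp'' in an abstract inequality. The paper literally integrates the pointwise derivative bounds $|r'(x)|\le g(x)$ (resp.\ $-g(x)$, and a constant cap where $g$ is large) over the explicit interval $[d^{-},d^{+}]$, evaluates the resulting one-variable integrals numerically, and finds $|r(d^{+})-r(d^{-})|<\tfrac{3\pi}{2}$, hence $|r(d^{+})|<\tfrac{7\pi}{2}$. Combined with the trapping lemmas this forces $\lim_{x\to\infty}r(x)\in\{\pm\tfrac{\pi}{2},\pm\tfrac{3\pi}{2},\pm\tfrac{5\pi}{2},\pm\tfrac{7\pi}{2}\}$. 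So the specific cutoff $7$ is the output of a concrete computation, not of a sharpened energy estimate; your proposal gives no indication of how any test-function argument would land on that particular number. The regular-singular-point analysis you sketch at the endpoints is correct in spirit but is used in the paper for the \emph{existence} theorem (Theorem~B), not for the degree bound; for Theorem~A what matters near the endpoints is the Lyapunov-based trapping, which your outline does not contain.
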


Numerical experiments indicate that for all solutions $r$ of the BVP the Brouwer degree of $\psi_r$ is $\pm 1$ or $\pm 3$, i.e., that the cases $\pm 5$ and $\pm 7$ do not arise.

\smallskip

These considerations and results can be found in Section\,\ref{sec2}.

\medskip

\subsection*{{Construction of solutions}}
In order to find solutions of the BVP we use a shooting method at the degenerate point $t=0$. 
This is possible since for each $v\in\R$ there exists a unique solution of the initial value problem at $t=0$.

\begin{thb*}
For each $v\in\R$ the initial value problem $r(t)_{\lvert t=0}=0, \dot r(0):=\tfrac{d}{dt}r(t)_{\lvert t=0}=v$
has a unique solution $r_v$.
\end{thb*}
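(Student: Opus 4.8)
The plan is to write the ODE as a first‑order system and apply a fixed‑point argument adapted to the regular singular point at $t=0$. Rewriting the equation, the coefficient $\csc^2 2t$ forces a singularity of order $t^{-2}$ as $t\to 0$; after expanding $\sin 4t$, $\sin^2 t$ and $\cos^3 t$ in Taylor series one sees that the singular part organizes itself so that the indicial equation has roots $0$ and $1$. Concretely, near $t=0$ the equation has the shape $\ddot r = -\tfrac{1}{t}\,\dot r + \tfrac{c}{t^2}\,r + (\text{higher order})$ for a suitable constant $c$ arising from the $\sin 2r(t)$ and $\sin r(t)$ terms linearized at $r=0$; I would first compute $c$ explicitly and check that the indicial roots are the nonnegative integers $0$ and $1$, which is what makes the initial data $r(0)=0$, $\dot r(0)=v$ the natural (and only) free parameter.

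The key steps, in order, are as follows. First, substitute $r(t)=v\,t + s(t)$ with $s(t)=o(t)$ and derive the integral equation that $s$ must satisfy, obtained by integrating the ODE twice against the appropriate Green's kernel for the operator $\ddot{\,\cdot\,}+\tfrac1t\dot{\,\cdot\,}-\tfrac{c}{t^2}$; the kernel is elementary since the homogeneous solutions are powers of $t$. Second, set up a weighted Banach space of continuous functions on $[0,\varepsilon]$ with norm measuring decay like $t^{1+\alpha}$ for a small $\alpha>0$, and show the integral operator maps a small ball into itself and is a contraction there, using that the nonlinearity $\sin 2r-$ (rest) is smooth and vanishes to first order at $r=0$ so that the quadratic remainder gains the needed power of $t$. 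This yields a unique local solution $r_v$ on some $[0,\varepsilon]$ depending a priori on $v$. Third, invoke standard ODE theory on $]0,\pi/2[$, where the equation is regular, to extend $r_v$ uniquely as far as it stays finite; since the statement only claims existence and uniqueness of the solution of the initial value problem (not that it reaches $\pi/2$), it suffices to note that two solutions agreeing to first order at $0$ agree on their common interval of existence by the uniqueness in the fixed‑point argument together with uniqueness for the regular Cauchy problem.

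The main obstacle I expect is the singular fixed‑point step: one must choose the weight $\alpha$ and the radius of the ball compatibly so that (a) the Green's kernel estimates close up — this requires $\alpha$ small enough that the integrals $\int_0^t \tau^{-1}(\tau/t)^{?}\,d\tau$ converge with the right power of $t$ — and (b) the contraction constant is $<1$ on $[0,\varepsilon]$ for $\varepsilon$ small, which is where one uses that the nonlinear terms are Lipschitz with a constant controlled by the sup of $r$. A secondary technical point is bookkeeping the three trigonometric source terms: $2\sin 4t\,\dot r$, $4\sin^2 t\,\sin 2r$ and $-8\cos^3 t\,\sin r$ each contribute to the linear-in-$r$ part near $0$, and one must verify that after dividing by $\sin^2 2t\sim 4t^2$ their combined linear coefficient is exactly the $c$ whose indicial roots are $0,1$; getting a wrong $c$ would break the whole scheme, so I would double‑check this expansion carefully, expanding to the order needed to also control the first genuinely nonlinear correction.
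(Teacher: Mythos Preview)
Your approach---build the Green's kernel for the singular linear part and run a weighted contraction argument---is a valid route, but it is not the one the paper takes. The paper introduces the variable $s=t^2$ and the Euler operator $\theta=s\,\tfrac{d}{ds}$, which converts the $t^{-2}$ singularity into an $s^{-1}$ singularity and puts the equation in the form $\dot y=\tfrac{1}{s}M_{-1}(y)+M(s,y)$. It then invokes the Malgrange theorem (in the version from Foscolo--Haskins): one only has to check that $d_{y_0}M_{-1}$ has no eigenvalue in $\{1,2,3,\dots\}$, and the paper computes the eigenvalues to be $\tfrac12$ and $-1$. So the paper's proof is a black-box citation after a clever change of variable, whereas yours essentially reproves by hand the special case of Malgrange's theorem that is needed here. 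Both buy the same conclusion; the paper's version is shorter, yours is more self-contained.

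There is, however, a concrete error in your preliminary expansion which you should fix before proceeding. Near $t=0$ one has $\csc^2 2t\sim\tfrac{1}{4t^2}$, $2\sin 4t\sim 8t$, $8\cos^3 t\,\sin r\sim 8r$, and $4\sin^2 t\,\sin 2r=O(t^2 r)$, so the singular linear part of the ODE is
\[
\ddot r+\tfrac{2}{t}\,\dot r-\tfrac{2}{t^2}\,r=0,
\]
not $\ddot r+\tfrac{1}{t}\dot r-\tfrac{c}{t^2}r=0$ as you wrote. The indicial roots are therefore $1$ and $-2$ (equivalently $\tfrac12$ and $-1$ in the $s$-variable, matching the paper's eigenvalues), not $0$ and $1$. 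This does not break your scheme---the ansatz $r(t)=v\,t+s(t)$ with $s$ in a space weighted by $t^{1+\alpha}$ is exactly right for the root $1$, and the decaying homogeneous solution $t^{-2}$ is harmless---but you flagged yourself that ``getting a wrong $c$ would break the whole scheme,'' and you will indeed need the correct coefficients when you write down the Green's kernel (built from $t$ and $t^{-2}$, with Wronskian $\sim t^{-4}$) and close the contraction estimates.
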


We prove that we cannot increase the initial velocity $v$ arbitrarily without increasing the number
of intersections of $r_v$ and $\pi$, the so-called \textit{nodal number}.
This is one of the main ingredients for the proof of the following theorem.

\begin{thc*}
\label{tea}
For each $k\in\N$ there exists a solution of the BVP with nodal number $k$.
\end{thc*}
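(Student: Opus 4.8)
The plan is to run the shooting method from $t=0$, using the family $r_v$ from Theorem B, and to exploit a continuity/monotonicity dichotomy on the nodal number together with suitable asymptotic control near the singular endpoint $t=\pi/2$. First I would recall that by Theorem B the map $v\mapsto r_v$ is well-defined, and I would argue (via continuous dependence on initial data for the regularized ODE, together with uniqueness at the degenerate point) that $r_v(t)$ depends continuously on $v$ on compact subsets of $]0,\pi/2[$, and that one can extend $r_v$ continuously up to $t=\pi/2$ for generic $v$. A solution of the BVP is then a value of $v$ for which $r_v(\pi/2)=(2\ell+1)\pi/2$ for some $\ell\in\Z$; the nodal number of such a solution is the number of times $r_v$ crosses the value $\pi$.

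The core of the argument is to define, for each $v$ in the (open) set where $r_v$ extends to $\pi/2$ and avoids the multiples of $\pi/2$ there, the integer-valued function $N(v)$ counting intersections of $r_v$ with the line $r=\pi$, and to show: (i) $N$ is locally constant away from the discrete \emph{bifurcation set} of $v$'s for which $r_v(\pi/2)\in\pi\Z$ (i.e.\ where an intersection point escapes through the endpoint); (ii) as $v$ crosses such a bifurcation value, $N$ jumps by exactly one; and (iii) $N(v)\to\infty$ as $v\to\infty$, while $N(v)=0$ for small $v$ (for $v$ near $0$, $r_v$ stays close to the zero solution and never reaches $\pi$). Given these, for each $k\in\N$ there is a bifurcation value $v_k$ at which the nodal count passes from $k-1$ to $k$ or from $k$ to $k-1$; at $v_k$ an intersection with $r=\pi$ disappears into the endpoint, which forces $r_{v_k}(\pi/2)\in\pi\Z$. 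But $r_{v_k}(\pi/2)$ must in fact be an \emph{odd} multiple of $\pi/2$ (this is the BVP condition), so one needs the finer statement that the endpoint value at a bifurcation is always of the form $(2\ell+1)\pi/2$ — this follows because the even multiples of $\pi/2$, namely the points in $\pi\Z$, are precisely the values at which an intersection with $r=\pi$ exits, and a separate parity/continuity analysis shows the relevant crossing is with $r=\pi$ itself (which lies in $\pi\Z$), so the limiting endpoint value is $\pi$ times an integer of the correct parity; here I would lean on the monotonic "no intersections lost without gaining endpoint contact" principle and a careful bookkeeping of how many crossings lie below $\pi$ versus above.

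The key monotonicity input — that one cannot increase $v$ without eventually increasing the nodal number — was flagged in the text as already established; I would use it to guarantee that $N(v)$ is genuinely unbounded and that the bifurcation values $v_k$ accumulate only at $v=\infty$, so that each $k$ is realized. To produce an actual BVP solution with nodal number exactly $k$, I would take the bifurcation value $v_k$ itself: at $v_k$ the solution $r_{v_k}$ extends to $t=\pi/2$ with $r_{v_k}(\pi/2)=(2\ell+1)\pi/2$, and by construction its number of intersections with $r=\pi$ on $]0,\pi/2[$ equals $k$ (the escaping intersection is counted as a boundary crossing in the limit, or conversely the remaining interior crossings number exactly $k$ — whichever normalization makes the bookkeeping consistent).

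The main obstacle I expect is step (ii)–(iii): controlling the behaviour of $r_v$ near the singular endpoint $t=\pi/2$ uniformly in $v$, and proving that the nodal number jumps by exactly one (not more) at each bifurcation, i.e.\ that intersections with $r=\pi$ are "lost" one at a time and only through the endpoint, never through a tangency in the interior that would change the count by two or fail to change it. Ruling out interior tangencies requires a sign analysis of $\ddot r$ at points where $r=\pi$ and $\dot r=0$ using the ODE $\ddot r=-\csc^2 2t\,(2\sin 4t\cdot\dot r+4\sin^2 t\cdot\sin 2r-8\cos^3 t\cdot\sin r)$; at $r=\pi$ one has $\sin 2r=0$ and $\sin r=0$, so the right-hand side vanishes when $\dot r=0$ too, meaning $r\equiv\pi$ is itself a solution — hence by uniqueness no nonconstant $r_v$ can be tangent to $r=\pi$ in the interior, which is exactly what makes every crossing transverse and the count well-behaved. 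Assembling this uniqueness-driven transversality with the continuity and unboundedness of $N(v)$ then yields, for every $k\in\N$, a value $v$ giving a BVP solution of nodal number $k$.
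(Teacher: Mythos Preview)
Your overall framework---shooting from $t=0$, tracking the nodal number $N(v)$, using transversality of crossings with $r=\pi$ via the uniqueness of the constant solution $r\equiv\pi$, and invoking the unboundedness of $N(v)$ as $v\to\infty$---matches the paper's architecture. The paper likewise takes $v_k=\sup\{v:\mathfrak{N}(r_v)=k\}$ and shows $r_{v_k}$ is a BVP solution with nodal number $k$.

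The genuine gap is your argument for why $r_{v_k}$ satisfies the BVP condition $r_{v_k}(\tfrac{\pi}{2})=(2\ell+1)\tfrac{\pi}{2}$. You say that at a bifurcation the escaping intersection forces $r_{v_k}(\tfrac{\pi}{2})\in\pi\Z$, and then claim a ``parity/continuity analysis'' yields an odd multiple of $\tfrac{\pi}{2}$. These two statements are incompatible: $\pi\Z$ and $\{(2\ell+1)\tfrac{\pi}{2}\}$ are disjoint. Your picture---that the endpoint value at the critical $v_k$ equals $\pi$---is exactly what does \emph{not} happen, and no parity bookkeeping can repair this. You also implicitly assume that $r_v(\tfrac{\pi}{2})$ exists and varies continuously for all $v$, but for many $v$ the solution diverges to $\pm\infty$, so the ``bifurcation set'' $\{v:r_v(\tfrac{\pi}{2})\in\pi\Z\}$ is not well-defined as you describe it.

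What the paper does instead is a trichotomy at $x\to\infty$ (their Lemma~2.6): every solution of the ODE has $\lim_{x\to\infty}r(x)\in\tfrac{\pi}{2}\Z$ or $\lim_{x\to\infty}r(x)=\pm\infty$. Then two structural lemmas eliminate the unwanted cases for $r_{v_k}$. First, Lemma~2.8 shows that if $\lim_{x\to\infty}r(x)=j\pi$ for some $j\in\Z$, then $r$ oscillates infinitely often around $j\pi$; for $j=1$ this forces infinite nodal number, contradicting $\mathfrak{N}(r_{v_k})=k$, and the stripe lemmas (Lemmas~2.5 and 2.7) handle the remaining $j$. Second, the possibility $\lim_{x\to\infty}r_{v_k}(x)=\pm\infty$ is ruled out by a separate contradiction argument (Step~4 of the paper's proof) that uses the behaviour of $r_v$ for $v$ just above $v_k$ together with Lemma~2.5. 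The oscillation lemma is the key idea you are missing: it is precisely what prevents the endpoint value from landing in $\pi\Z$ and forces it into $(2\Z+1)\tfrac{\pi}{2}$.
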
 

Infinitely many of the solutions constructed in Theorem\,C have Brouwer degree of absolute value greater or equal to three.

\smallskip

These result are all contained in Section\,\ref{sec3}.

\medskip

\subsection*{{Limit configuration}}
We prove that the solutions of the BVP converge on every closed interval $I\subset (0,\tfrac{\pi}{2})$
against a limit configuration when the initial velocity goes to infinity:
we show that for large initial velocities $r_v$ becomes arbitrarily close to $\pi$ on $I$.  

\begin{thd*}
For every closed interval $I\subset (0,\tfrac{\pi}{2})$ and each $\epsilon>0$ there exists a velocity $v_0$ such that $\lvert r_v(t)-\pi\rvert<\epsilon$ for all $t\in I$ and $v\geq v_0$.
\end{thd*}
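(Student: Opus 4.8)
The plan is to analyze the ODE on a fixed closed interval $I=[a,b]\subset(0,\pi/2)$, where the coefficients $\csc^2 2t$, $\sin 4t$, $\sin^2 t$ and $\cos^3 t$ are all smooth and bounded, so the equation is uniformly non-degenerate on $I$. The guiding intuition is that $r\equiv\pi$ is the (essentially unique) bounded obstruction the solutions must hug for large $v$: indeed, plugging $r=\pi$ into the right-hand side gives $4\sin^2 t\cdot\sin 2\pi-8\cos^3 t\cdot\sin\pi=0$, so the constant $\pi$ is \emph{not} itself a solution (the $\dot r$ term vanishes but we are not claiming $\pi$ solves the ODE), yet it is the value around which $\sin 2r$ and $\sin r$ simultaneously change sign in a way that traps trajectories. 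First I would recall from Theorem C and its proof that increasing $v$ increases the nodal number (the number of times $r_v$ crosses $\pi$); the key quantitative input I would extract is that, for large $v$, $r_v$ reaches a neighborhood of $\pi$ very quickly, i.e. there is $t_1(v)\to 0$ with $r_v(t_1(v))$ close to $\pi$ and $\dot r_v(t_1(v))$ controlled.

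The main step is a trapping/barrier argument on $I$. I would compare $r_v$ with sub- and super-solutions of the ODE that are close to $\pi$. Concretely, write $r=\pi+\rho$ and linearize: $\sin 2r=\sin 2\rho\approx 2\rho$ and $\sin r=-\sin\rho\approx-\rho$, so the ODE becomes, to leading order,
\begin{align*}
\ddot\rho=-\csc^2 2t\left(2\sin 4t\cdot\dot\rho+\bigl(8\sin^2 t+8\cos^3 t\bigr)\rho+O(\rho^2)\right).
\end{align*}
On $I$ this is a regular linear second-order equation for $\rho$ with bounded coefficients, and the zeroth-order coefficient $8(\sin^2 t+\cos^3 t)$ is strictly positive. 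This sign is exactly what makes $\pm\epsilon$ into barriers: if $\rho$ tries to exceed a small constant, the forcing term pushes $\ddot\rho$ back toward zero. I would make this precise by a maximum-principle / Grönwall estimate: assuming $|\rho(a)|$ and $|\dot\rho(a)|$ are small (which holds for $v$ large by the previous paragraph applied with $a$ in place of a point near $0$), one shows $|\rho(t)|$ stays below $\epsilon$ throughout $I$, with the bound depending only on $I$ and the initial data at $a$, not on $v$ directly.

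The genuine obstacle is the transition from the degenerate point $t=0$ to the regular regime: I must show that for $v\to\infty$ the solution $r_v$ is \emph{already} $\epsilon$-close to $\pi$ with small derivative by the time $t$ reaches the left endpoint $a$ of $I$. Near $t=0$ the $\csc^2 2t$ singularity dominates and the naive linearization is invalid; instead I would rescale time (for instance $s=\log t$ or a power substitution adapted to the $t^{-2}$ singularity) to turn the initial segment into an autonomous-like problem whose fast dynamics, driven by large $v$, carries $r_v$ from $0$ up past $\pi$ and then the restoring term $8\cos^3 t\cdot\sin r$ (dominant as $t\to0$, since $\cos^3 t\to1$ while $\sin^2 t\to0$) forces the trajectory to oscillate ever more tightly around $\pi$. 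A comparison argument using the conserved-type quantity or an energy for the leading-order equation $\ddot r\approx-\,\mathrm{const}\cdot t^{-2}(\cdots)$ should show the oscillation amplitude about $\pi$ decays as $v\to\infty$ on any interval bounded away from $0$. Once that "entry estimate" at $t=a$ is in hand, the barrier argument of the second paragraph propagates $\epsilon$-closeness across all of $I=[a,b]$, completing the proof. I expect the entry estimate near the singular point to require the most care, and it is likely where one would invoke the sharper asymptotic analysis of $r_v$ developed for Theorems B and C.
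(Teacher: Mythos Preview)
Your overall architecture is right and matches the paper's: obtain an ``entry estimate'' showing that for large $v$ the pair $(r_v-\pi,\dot r_v)$ is already small at the left endpoint of $I$, then propagate smallness across $I$. However, the barrier reasoning you give for the propagation step is wrong. The linearized equation $\ddot\rho+p(t)\dot\rho+q(t)\rho=0$ with $q>0$ is \emph{oscillatory}, not dissipative; the positivity of $q$ does not make the constants $\pm\epsilon$ into barriers (think of the harmonic oscillator: at a positive maximum $\ddot\rho<0$, but that is just concavity and says nothing about amplitude). What does work---and what the paper actually does---is a straight Gr\"onwall bound on the distance $\rho_v=\sqrt{\varphi_v^2+(\varphi_v')^2}$ with $\varphi_v=r_v-\pi$: from the ODE one gets $\rho_v\rho_v'\le c\rho_v^2$ on any compact $x$-interval, hence $\rho_v(x)\le e^{c(x-T_-)}\rho_v(T_-)$, so smallness at $T_-$ propagates with only an exponential loss, harmless on a fixed interval. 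You mention Gr\"onwall alongside the maximum principle, so you are close, but the stated mechanism (``the forcing term pushes $\ddot\rho$ back toward zero'') is not the reason it works.

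The real content is the entry estimate, which you correctly flag as the hard part but leave vague. The paper works in $x=\log\tan t$ (your proposed $s=\log t$ is essentially this near $t=0$) and uses two concrete ingredients you do not name. First, a Lyapunov function $W_-(x)=\tfrac12 r'^2-\tfrac{1+\tanh x}{2}\cos^2 r+\sqrt2(1-\tanh x)^{3/2}\cos^2(r/2)$ that is strictly \emph{decreasing} for $x\le 0$; this turns a pointwise estimate into one on a whole left half-line. Second, and decisively, the convergence (from Gastel) $\varphi_v(x-\log v)\to\psi(x)$, where $\psi$ solves the autonomous limit $\psi''+\psi'+2\sin\psi=0$ and satisfies $\psi(x)\to 0$ as $x\to+\infty$. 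This is precisely what guarantees, for large $v$, a point $T_-$ far to the left with $\rho_v(T_-)$ as small as one likes. Your ``energy for the leading-order equation'' is gesturing at this, but the argument genuinely needs the specific limiting ODE and its asymptotics, not a generic comparison. There is also a final wrinkle the paper handles and you do not: a priori $r_v(T_-)$ could be near $-\pi$ rather than $\pi$, and ruling this out uses the structure established in Lemma~\ref{dp} together with the existence of solutions of the BVP with odd nodal number.
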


As a consequence we prove that the Brouwer degree of solutions $r_v$ of the BVP with \lq large\rq\hspace{0.06cm}initial velocity can only be $\pm 1$ or $\pm 3$. 

\smallskip

These results can be found in Section\,\ref{sec4}.

\bigskip

The paper is organized as follows: a short introduction to those aspects of harmonic maps needed in the present paper can be found in Section\,\ref{sec1}.
We provide the preliminaries in Section\,\ref{sec2}, where we in particular consider the Brouwer degree of the maps $\psi_r$ and prove Theorem\,A.
In Section\,\ref{sec3} we deal with the construction of solutions of the BVP and prove Theorem\,B and Theorem\,C.
Finally, in Section\,\ref{sec4} we investigate the behavior of those solutions of the initial value problem with large initial velocities and prove that they converge against a limit configuration, i.e., we show Theorem\,D.

\section{Harmonic maps between Riemannian manifolds}
\label{sec1}
Initiated by a paper of Eells and Sampson \cite{eells3}, the study of harmonic maps between Riemannian manifolds became an active research area, 
 see e.g. \cite{BC,eellsl,eells2,ga,smith} and the references therein. 
In this section we give a short and therefore incomplete introduction to harmonic maps between Riemannian manifolds.
The focus lies on those techniques, papers and results which we use as inspiration for some proofs contained in this work.
 For an elaborate introduction to harmonic maps we refer the reader to \cite{er}.

\smallskip

\textbf{Problem.}
In order to construct harmonic maps one has to find solutions $\varphi$ of the semi-linear, elliptic partial differential equation $\tau(\varphi)=0$.
Note that there is no general solution theory for these partial differential equations.

\medskip

\textbf{Central question.} The central question is whether every homotopy class 
 of maps between Riemannian manifolds admits a harmonic representative.
If the target manifold is compact and all its sectional curvatures are nonnegative Eells and Sampson gave a positive answer to this question.
However, if the target manifold also admits positive sectional curvatures the answer to this question is only known for some special cases. See e.g. \cite{ga2}
for a list of those homotopy groups of spheres which can be represented by harmonic maps.

\medskip

\textbf{Reduction by imposing symmetry.} By imposing symmetry conditions on the solution $\varphi$ of the partial differential equation $\tau(\varphi)=0$ one can sometimes reduce this problem to an easier problem, for example to finding solutions of an ordinary differential equation. 
For the general reduction theory we refer the reader to \cite{er}.

\smallskip

One special situation for which the Euler-Lagrange equations reduce to an ordinary differential equation is the following: the equivariant homotopy classes of equivariant self-maps of compact cohomogeneity one manifolds whose orbit space is a closed interval form an infinite family. 
In \cite{ps} P\"uttmann and the author reduced the problem of finding harmonic representatives of these homotopy classes to solving singular boundary value problems for nonlinear second order ordinary differential equations.\\
Note that the case under consideration, namely self-maps of $\SU(3)$ which are equivariant with respect to the cohomogeneity one action given in the introduction, clearly fits in this scheme. 

\medskip

\textbf{Harmonic maps between cohomogeneity one manifolds.}
We give a short survey of those results of \cite{ps} which are relevant for this paper.

\smallskip

\textit{Notation.} Let $G$ be a compact Lie group which acts with cohomogeneity one on the Riemannian manifold $(M,g)$ such that the orbit space is isometric to $\lbrack 0,1\rbrack$.
We denote by $\gamma$ a fixed normal geodesic.
The isotropy groups at the regular points are constant and will be denoted by $H$. Furthermore, let $M_{(H)}$ be the regular part of $M$
and $W$ the Weyl group, i.e., the subgroup of the elements of $G$ that leave $\gamma$ invariant modulo the subgroup of elements that fix $\gamma$ pointwise. Throughout this paper we assume that $\gamma$ is closed which is equivalent to the statement that $W$ is finite. Let $Q$ be a given biinvariant metric on $G$. Denote by $\mathfrak{g}$ and $\mathfrak{h}$ the lie algebras of $G$ and $H$, respectively, and let 
$\mathfrak{n}$ be the orthonormal complement of $\mathfrak{h}$ in $\mathfrak{g}$. Define the metric endomorphisms $P_t : \mathfrak{n} \to \mathfrak{n}$ by
$$Q(X_1, P_t\cdot X_2) = \langle X_1^{\ast},X_2^{\ast} \rangle_{\vert\gamma(t)},$$ where $X_i\in\mathfrak{g}$ and $X_i^{\ast}$
 is the associated action field on $M$.

\smallskip

\textit{Maps.} 
It was proved in \cite{puttmann} that the assignment $g\cdot\gamma(t)\mapsto g\cdot\gamma(kt)$ leads to a well defined smooth self-map of $M$, the $k$-map, if $k$ is of the form $k = j\lvert W\rvert/2+1$ where $j\in2\Z$. This is even true for any integer $j$ if the isotropy group at $\gamma(1)$ is a subgroup of the isotropy group at $\gamma(\lvert W\rvert/2+1)$.
In \cite{ps} P\"uttmann and the author examined the harmonicy of the so-called reparametrized $k$-maps $\psi : M_{(H)} \to M$ given by
\begin{gather*}
  \psi (g\cdot \gamma(t)) = g\cdot \gamma(r(t))
\end{gather*}
where $r : \;]0,1[\; \to \;]0,k[$ is a smooth function with $\lim_{t\to 0} r(t) = 0$ and $\lim_{t\to 1}r(t) = k$. 

\smallskip

\textit{Tension field.} 
In \cite{ps} it was shown that for the reparametrized $k$-maps the normal and the tangential component of the tension field 
are given by
\begin{gather*}
  \tau^{\nor}_{\vert\gamma(t)} = \ddot r(t) + \tfrac{1}{2}\dot r(t) \tr P_t^{-1}\dot P_t
    - \tfrac{1}{2} \tr P_t^{-1} (\dot P)_{r(t)}.
\end{gather*}
and
\begin{gather*}
\tau^{\tan}_{\vert\gamma(t)} = \bigl(P_{r(t)}^{-1} \sum_{i=1}^n [E_i,P_{r(t)}E_i]\bigr)^{\ast}_{\vert\gamma(r(t))},
\end{gather*}
respectively,
where $E_1,\ldots,E_n \in \mathfrak{n}$ are such that $E^{\ast}_{1\vert\gamma(t)},\ldots, E^{\ast}_{n\vert\gamma(t)}$ form an orthonormal basis of $T_{\gamma(t)}(G\cdot\gamma(t))$.

\begin{remark}
\begin{enumerate}
\item For the cohomogeneity one action $$\SU(3)\times\SU(3)\rightarrow\SU(3), (A,B)\mapsto ABA^{\tp},$$ the identity
$\tau^{\tan}=0$ holds trivially. 
It is proved in Section\,\ref{sec2} that the equation $\tau^{\nor}=0$ reduces to the ODE.
\item The Euler Lagrange equations associated to the cohomogeneity one action
\begin{align*}
{\SO(m_0\!+\!1)\times\SO(m_1\!+\!1)}\times\Sph^{m_0+m_1+1}\rightarrow\Sph^{m_0+m_1+1}, (A,B,v)\mapsto \left(\begin{smallmatrix}
A&0\\
0&B
\end{smallmatrix} \right)v
\end{align*}
are given by
\begin{align*}
\ddot r(t)=\left((m_1\!-\!m_0)\csc2t-(m_0\!+\!m_1)\cot2t\right)\dot r(t)-m_1\tfrac{\sin2r(t)}{2\cos^2t}+m_0\tfrac{\sin2r(t)}{2\sin^2t}.
\end{align*}
Each solution of this ordinary differential equation which satisfies
$r(0)=0$ and $r(\tfrac{\pi}{2})=(2\ell+1)\tfrac{\pi}{2}, \ell\in\Z$, yields a harmonic self-map of $\Sph^{m_0+m_1+1}$.
This boundary value problem was considered in \cite{si2}.
\end{enumerate}
\end{remark}

\section{Preliminaries}
\label{sec2}
This preparatory section is structured as follows: after proving in the first two subsections that each solution of the BVP yields a harmonic self-map of $\SU(3)$, we introduce the variable $x=\log\tan t$ in the third subsection.
It turns out that this variable is more convenient than the variable $t$ for several of our subsequent considerations.
In the fourth and fifth subsection we provide some restrictions for solutions $r$ of the BVP.
Finally, in the sixth subsection we prove Theorem\,A, i.e., that each solution $r$ of the BVP has Brouwer degree $\pm1, \pm3, \pm 5$ or $\pm 7$.

\smallskip

Throughout this section let $r$ be a solution of the ODE.

\subsection{Deduction of the ODE}
The expressions $\tau^{\nor}$ and $\tau^{\tan}$ depend on $P_t$ only, see Section\,\ref{sec1}. Hence, it is sufficient to determine this endomorphism
and plug it into these identities.
Let $\SU(3)$ be endowed with the metric $\langle A_1,A_2\rangle=\mbox{tr}(A_1\overline{A_2}^{\tp})$.
A normal geodesic $\gamma$ is given by
\begin{align*}
\gamma(t)=\left(\begin{smallmatrix}
\cos t&-\sin t&0\\
 \sin t & \cos t&0\\
0&0&1
\end{smallmatrix} \right).
\end{align*}
We consider the basis $\left\{\mu_{i}\right\}_{i=1}^7$ of $\frak{n}$ given by $\mu_1=\mbox{diag}\left(i, i, -2i\right)$, $\mu_2=\mbox{diag}\left(i, -i, 0\right),$
\begin{align*}
\mu_3=\left(\begin{smallmatrix}
0&-i&\\
-i&0&\\
&&0
\end{smallmatrix} \right), \mu_4=\left(\begin{smallmatrix}
&&1\\
&0&\\
-1&&
\end{smallmatrix} \right), \mu_5=\left(\begin{smallmatrix}
0&&\\
&0&-1\\
&1&0
\end{smallmatrix} \right), \mu_6=\left(\begin{smallmatrix}
&&-i\\
&0&\\
-i&&
\end{smallmatrix} \right),
\mu_7=\left(\begin{smallmatrix}
0&&\\
&0&-i\\
&-i&0
\end{smallmatrix} \right).      
\end{align*}
A straightfoward calculation yields
\begin{align*}
P_t=4\,\mbox{diag}\left(1,\cos^2t,\cos^2t,\sin^2(t/2),\sin^2(t/2),\cos^2(t/2),\cos^2(t/2)\right).
\end{align*}
By plugging $P_t$ into the equations  $\tau^{\tan}=0$ and $\tau^{\nor}=0$ we get
that $\tau^{\nor}=0$ is equivalent to the ODE and the identity $\tau^{\tan}=0$ is satisfied trivially.

\subsection{Initial value problem}
We prove that each solution $r$ of the BVP is smooth, i.e., we deal with the degenerate ends of the interval of definition.

\smallskip

In what follows we deal with the initial value problem at the left degenerate end of the interval $\left(0,\frac{\pi}{2}\right)$.
In order to solve this initial value problem we use a theorem of Malgrange in the version that can be found in \cite{haskins}. 

\smallskip

\noindent\textbf{Theorem of Malgrange (Theorem 4.7 in \cite{haskins}):}
\textit{Consider the singular initial value problem
\begin{align} 
\label{sing}
\dot y=\tfrac{1}{t}M_{-1}(y)+M(t,y),\hspace{1cm}y(0)=y_0,
\end{align}
where $y$ takes values in $\R^k$, $M_{-1}:\R^k\rightarrow\R^k$ is a smooth function of $y$ in a neighborhood of
$y_0$ and $M:\R\times\R^k\rightarrow\R^k$ is smooth in $t$, $y$ in a neighborhood of $(0,y_0)$. Assume that 
\begin{enumerate}
\renewcommand{\labelenumi}{(\roman{enumi})}
\item $M_{-1}(y_0) = 0$,
\item$h\mbox{Id}-d_{y_0}M_{-1}$ is invertible for all $h\in\N$, $h\geq1$.
\end{enumerate}
Then there exists a unique solution $y(t)$ of (\ref{sing}). Furthermore $y$ depends continuously on $y_0$ satisfying (i) and (ii).}

\smallskip

Next we finally solve the initial value problem at $t=0$.

\begin{theorem}
\label{ivp}
For each $v\in\R$ the initial value problem $r(t)_{\lvert t=0}=0, \dot r(0):=\tfrac{d}{dt}r(t)_{\lvert t=0}=v$
has a unique solution.
\end{theorem}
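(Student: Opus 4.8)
First I would rewrite the second-order ODE
\begin{align*}
\ddot r(t)=-\csc^2 2t\left(2\sin 4t\cdot\dot r(t)+4\sin^2 t\cdot\sin 2r(t)-8\cos^3 t\cdot\sin r(t)\right)
\end{align*}
as a system for $y=(r,\dot r)=:(y_1,y_2)\in\R^2$. The delicate point is the behaviour of the coefficients as $t\to 0$: since $\csc^2 2t\sim 1/(4t^2)$ and $\sin 4t\sim 4t$, the coefficient of $\dot r$ behaves like $-2/t$, while $\csc^2 2t\cdot(4\sin^2 t\cdot\sin 2r-8\cos^3 t\cdot\sin r)\sim \bigl(\sin 2r-2\sin r\bigr)/t^2$, which is $O(r^2)/t^2$ near $r=0$ and hence, along a solution with $r(0)=0$, of size $O(1)$. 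The goal is to exhibit the system in the Malgrange normal form $\dot y=\tfrac1t M_{-1}(y)+M(t,y)$.

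\textbf{Second step: identify $M_{-1}$ and $M$.} Expanding the trigonometric coefficients in powers of $t$ (Taylor series of $\csc^2 2t\sin 4t$ and $\csc^2 2t$ times the smooth functions of $r$), the $1/t$-singular part of $\dot y_2$ comes only from the $\dot r$-term and equals $-\tfrac{2}{t}y_2$; all other contributions, including the $\sin 2r-2\sin r$ combination divided by $t^2$, must be reorganised so that the residual $1/t$-part vanishes. Concretely I expect
\begin{align*}
M_{-1}(y_1,y_2)=\bigl(0,\,-2y_2\bigr),
\end{align*}
with everything else absorbed into a function $M(t,y)$ that is smooth in $(t,y)$ near $(0,y_0)$; here one uses that $\sin 2r-2\sin r$ vanishes to second order at $r=0$, so that dividing by $t^2$ and substituting $r=y_1$ leaves a function smooth in $t$ and $y_1$ provided one writes $y_1=t\,w$ — i.e. one may need the standard trick of introducing $w=r/t$ as an auxiliary variable, enlarging the system to $\R^3$, in order to make the $t^{-2}$ term manifestly smooth. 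Either way, the outcome is a system of the form (\ref{sing}) with $y_0=(0,v)$ (or $(0,v,v)$ in the enlarged system, since $r/t\to\dot r(0)=v$).

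\textbf{Third step: verify the hypotheses (i) and (ii).} One checks $M_{-1}(y_0)=0$, which holds because $M_{-1}(0,v)=(0,-2v)$ — wait, this is not zero, so in fact the correct normalisation must place the $-2y_2/t$ term differently: the genuine singular coefficient after careful expansion is $-\csc^2 2t\cdot 2\sin 4t=-4\cot 2t\sim -2/t$, and acting on $\dot r$ this is a term $-\tfrac2t\dot r$; to fit Malgrange one shifts to variables where $y_0$ is the appropriate fixed point, so I would instead use $y_1=r$, $y_2=t\dot r$ (or $r/t$), making $M_{-1}$ linear with $M_{-1}(y_0)=0$ automatic. Then $d_{y_0}M_{-1}$ is a constant matrix whose eigenvalues I compute (I expect eigenvalues like $0$ and $-2$, or $1$ and $-1$ depending on the chosen variables); condition (ii) requires that no positive integer $h\ge 1$ is an eigenvalue, which holds as long as the eigenvalues are $\le 0$ (or otherwise avoid $\N_{\ge1}$). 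Once (i) and (ii) hold, Malgrange gives a unique solution $y(t)$ with $y(0)=y_0$, and unwinding the change of variables yields the unique $r=r_v$ with $r(0)=0$, $\dot r(0)=v$; smoothness of $r$ on $[0,\pi/2)$ follows since Malgrange produces a smooth (indeed analytic-in-data) solution. The main obstacle is the bookkeeping in the second step: getting the Taylor expansions of the singular coefficients right and choosing the variable substitution ($r/t$ versus $t\dot r$) that simultaneously kills the spurious $t^{-2}$ singularity and puts $M_{-1}$ in a form satisfying both (i) and the non-resonance condition (ii).
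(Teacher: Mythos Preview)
Your overall plan---recast the ODE as a singular first-order system and invoke Malgrange---is exactly what the paper does. But the proposal as written is not a proof: it contains a concrete computational slip and then stops before any verification is carried out. The slip is in your identification of the zeroth-order singular term. One has $\csc^2 2t\cdot 4\sin^2 t=\sec^2 t$, which is smooth, whereas $\csc^2 2t\cdot 8\cos^3 t=\tfrac{2\cos t}{\sin^2 t}\sim \tfrac{2}{t^2}$; hence the only genuinely singular contribution from these terms is $\tfrac{2\cos t}{\sin^2 t}\sin r\sim \tfrac{2\sin r}{t^2}$, \emph{not} $(\sin 2r-2\sin r)/t^2$, and it does not vanish to higher order at $r=0$. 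So in the naive variables $(r,\dot r)$ the system carries an honest $1/t^2$ singularity and Malgrange in the quoted form does not apply. Your own suggested remedy $w=r/t$ is the right one and, once actually executed, does work: with $y=(r/t,\dot r)$ and $y_0=(v,v)$ one finds $M_{-1}(y)=(y_2-y_1,\;2y_1-2y_2)$, so $M_{-1}(y_0)=0$ and $d_{y_0}M_{-1}=\bigl(\begin{smallmatrix}-1&1\\2&-2\end{smallmatrix}\bigr)$ has eigenvalues $0$ and $-3$ (neither of your guesses); hypotheses (i) and (ii) then hold and the theorem follows.

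The paper organises the same idea differently: it substitutes $s=t^2$, works with the Euler operator $\theta=s\,\tfrac{d}{ds}$, and rewrites the equation as a first-order system in $(r,\theta r)$. The linearisation at $s=0$ is $\bigl(\begin{smallmatrix}0&1\\1/2&-1/2\end{smallmatrix}\bigr)$ with eigenvalues $\tfrac12$ and $-1$, so Malgrange applies. Passing to $s=t^2$ converts the $1/t^2$ behaviour into a clean $1/s$ structure from the outset, so no auxiliary dependent variable is needed; your $r/t$ substitution achieves the same end by modifying the dependent rather than the independent variable.
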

\begin{proof}
We introduce the variable $s=t^2$ and the operator $\theta=s\tfrac{d}{ds}$. Clearly, $\tfrac{d}{dt}=\tfrac{2}{\sqrt{s}}\theta$ and 
$\tfrac{d^2}{dt^2}=-\tfrac{2}{s}\theta+\tfrac{4}{s}\theta^2$. In terms of $s$ and $\theta$ the ODE is given by
\begin{align*}
\theta^2r=\tfrac{1}{2}\theta r-s\csc^2(2\sqrt{s})\left(\tfrac{\sin(4\sqrt{s})}{\sqrt{s}}\theta r+\sin^2\sqrt{s}\sin(2r)-8\cos^3\sqrt{s}\sin r\right)=:\psi
\end{align*}
Next we rewrite this ODE as a first order system
\begin{align*}
\theta (r)=\theta r,\hspace{1cm}\theta(\theta r)=\psi
\end{align*}
and we compute the partial derivatives of the right hand sides with respect to $r$ and $\theta r$ at $s=0$.
We thus obtain
\begin{align*}
\begin{pmatrix}
\tfrac{\partial}{\partial r}\theta r&\tfrac{\partial}{\partial \theta r}\theta r\\
\tfrac{\partial}{\partial r}\psi&\tfrac{\partial}{\partial \theta r}\psi
\end{pmatrix}_{\lvert s=0}=\begin{pmatrix}
0&1\\
\tfrac{1}{2}& -\tfrac{1}{2} 
\end{pmatrix}.
\end{align*}
Since the eigenvalues of this matrix are given by $\tfrac{1}{2}$ and $-1$, the Theorem of Malgrange states 
 that a formal
power series solution of this equation converges to a unique solution in a neighborhood of $s = 0$.
This solution depends continuously on $v$.
\end{proof}

Similarly we deal with the initial value problem at $t=\tfrac{\pi}{2}$.
All together we thus obtain the following theorem.

\begin{theorem}
Each solution of the BVP yields a harmonic self-map of $\SU(3)$.
\end{theorem}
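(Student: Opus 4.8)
The plan is to combine the tension-field formulas from Section~\ref{sec1} with the computation of $P_t$ just carried out, and with the regularity analysis at the two degenerate ends of the interval. Recall from Section~\ref{sec1} that for a reparametrized $k$-map $\psi_r$ the tension field splits as $\tau(\psi_r) = \tau^{\nor} + \tau\ta$, and that both components are expressed entirely in terms of the metric endomorphisms $P_t$. Since we have shown $P_t = 4\,\diag(1,\cos^2 t,\cos^2 t,\sin^2(t/2),\sin^2(t/2),\cos^2(t/2),\cos^2(t/2))$, plugging this into the formula for $\tau\ta$ gives $\tau\ta \equiv 0$ (the brackets $[E_i, P_{r(t)} E_i]$ vanish because $P_{r(t)}$ is diagonal in a basis adapted to the relevant root space decomposition, so this is the trivial identity noted in the Remark), and plugging it into the formula for $\tau^{\nor}$ shows that $\tau^{\nor} = 0$ is exactly the ODE. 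Hence a function $r$ solving the ODE gives $\tau(\psi_r) = 0$ on the regular part $M_{(H)} = G/H \times\,]0,\pi/2[$.

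The remaining point is that a solution $r$ of the BVP, a priori only smooth on the open interval, actually extends to define a \emph{smooth} map on all of $\SU(3)$, not merely on the regular part; once that is known, the tension field vanishes on a dense open set and hence everywhere by continuity, so $\psi_r$ is genuinely harmonic. First I would treat the left end $t=0$: by Theorem~\ref{ivp} the initial value problem $r(0)=0$, $\dot r(0)=v$ has a unique solution, and the Malgrange machinery used in its proof yields a convergent power series, so $r$ is smooth as a function of $s=t^2$ near $0$; this is precisely the parity/regularity condition needed for $g\cdot\gamma(t)\mapsto g\cdot\gamma(r(t))$ to descend smoothly across the singular orbit at $\gamma(0)$, since near that orbit the normal coordinate enters only through $t^2$. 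Then I would treat the right end $t=\pi/2$ by the analogous change of variables (setting $\sigma = (\pi/2 - t)^2$ and shifting $r$ by the boundary value $(2\ell+1)\pi/2$), recasting the ODE as a singular first-order system with a suitable residue matrix, checking the eigenvalue hypothesis (no positive integer eigenvalues), and invoking Malgrange again to get smoothness in $\sigma$; this is the content of the sentence ``Similarly we deal with the initial value problem at $t=\pi/2$.'' Combining, $r$ is smooth in the appropriate squared variable at each end, which is exactly the criterion from \cite{puttmann,ps} guaranteeing that the reparametrized $k$-map $\psi_r$ is a smooth self-map of $\SU(3)$.

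Putting the pieces together: $\psi_r$ is a well-defined smooth self-map of $\SU(3)$, its tension field vanishes on the dense open regular part because $r$ solves the ODE, and hence $\tau(\psi_r)\equiv 0$ by continuity, i.e.\ $\psi_r$ is harmonic. The main obstacle is the regularity/extension at the degenerate ends, particularly at $t=\pi/2$ where the target point $\gamma(r(\pi/2)) = \gamma((2\ell+1)\pi/2)$ sits on a (possibly different) singular orbit and one must verify both that the isotropy containment needed for the $k$-map to be defined holds there and that the Malgrange eigenvalue condition (invertibility of $h\,\mathrm{Id} - d_{y_0}M_{-1}$ for all integers $h\geq 1$) is satisfied after the change of variables; the interior harmonicity is then a direct, if lengthy, substitution and the smooth-gluing statement is quoted from \cite{ps}.
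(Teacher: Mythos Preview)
Your proposal is correct and follows essentially the same route as the paper: the paper's ``proof'' of this theorem is simply the sentence ``All together we thus obtain the following theorem,'' which packages exactly the ingredients you list---$\tau\ta\equiv 0$ trivially, $\tau^{\nor}=0$ is the ODE (Section~2.1), Malgrange at $t=0$ (Theorem~\ref{ivp}), and the analogous Malgrange argument at $t=\pi/2$. You are, if anything, more explicit than the paper about the smooth-extension mechanism and the isotropy/eigenvalue checks at the right end, which the paper leaves entirely to ``Similarly.''
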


\subsection{The variable $x$}
In terms of $x=\log\tan t$ the BVP transforms into
\begin{align*}
r''(x)-\tanh x\cdot r'(x)+\tfrac{1+\tanh x}{2}\sin2r(x)-\tfrac{1}{\sqrt{2}}(1-\tanh x)^{\frac{3}{2}}\sin r(x)=0,
\end{align*}
with $\lim_{x\rightarrow -\infty}r(x)=0$ and $\lim_{x\rightarrow \infty}r(x)=\frac{(2\ell+1)\pi}{2}$, $\ell\in\Z$. 
We thus have moved the endpoint of the interval of definition to $+\infty$ and $-\infty$, respectively.
This boundary value problem will henceforth also be referred to as BVP; it will become clear from the context whether we consider the variable $t$ or the variable $x$.


\subsection{Behavior of $r$ for positive $x$}
This subsection is structured as follows: after fixing some notation we introduce a Lyapunov function $W$ which turns out to be an important tool.
Afterwards we give a bound for the first derivative of each solution $r$ of the BVP. Finally, we give some restrictions for the solutions $r$ of the BVP, e.g.,
we prove that each solution of the ODE satisfies $\lim_{x\rightarrow\infty}r(x)=\ell \tfrac{\pi}{2}$ for a $\ell\in\Z$ or $\lim_{x\rightarrow\infty}r(x)=\pm\infty$.

\bigskip

\textit{Notation.} 
For the following considerations it is helpful to introduce the functions $f, g, i:\R\rightarrow\R$ and $h:\R^2\rightarrow\R$ by
\begin{align*}
f&:x\mapsto (1+\tanh x-\sqrt{2}(1-\tanh x)^{\frac{3}{2}})^{\frac{1}{2}},\\
g&:x\mapsto \coth x\left(\tfrac{1}{2}(1+\tanh x)+\tfrac{1}{\sqrt{2}}(1-\tanh x)^{\frac{3}{2}}\right),\\
h&:(x,r)\mapsto\tfrac{1+\tanh x}{2}\sin^2r-\sqrt{2}(1-\tanh x)^{\frac{3}{2}}\sin^2\tfrac{r}{2},\\
i&:x\mapsto (-1+\tanh x+2\sqrt{2}(1+\tanh x)^{\frac{3}{2}})^{\frac{1}{2}}.
\end{align*}

\bigskip

\textit{Lyapunov function.} Introduce $W:\R\rightarrow\R$ by
\begin{align*}
W(x)=\tfrac{1}{2}r'(x)^2+\tfrac{1+\tanh x}{2}\sin^2r(x)-\sqrt{2}(1-\tanh x)^{\frac{3}{2}}\sin^2\tfrac{r(x)}{2},
\end{align*}
which turns out to be a Lyapunov function.

\begin{lemma}
\label{increase2}
Either the function $W$ is strictly increasing for $x\geq 0$ or $W\equiv 0$. Furthermore, $W\equiv 0$ if and only if $r\equiv 2k\pi$ for a $k\in\Z$.
\end{lemma}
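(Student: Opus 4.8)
The plan is to compute $W'(x)$ and show it is a sum of manifestly nonnegative terms, with enough rigidity that vanishing forces a very restrictive situation. Differentiating $W$ and substituting the BVP to eliminate $r''(x)$ is the natural first step: one has $W'(x)=r'(x)r''(x)+\tfrac{d}{dx}\bigl(\tfrac{1+\tanh x}{2}\bigr)\sin^2 r(x)+\tfrac{1+\tanh x}{2}\sin 2r(x)\,r'(x)-\tfrac{d}{dx}\bigl(\sqrt{2}(1-\tanh x)^{3/2}\bigr)\sin^2\tfrac{r(x)}{2}-\tfrac{1}{\sqrt{2}}(1-\tanh x)^{3/2}\sin r(x)\,r'(x)$, and replacing $r''(x)$ by $\tanh x\cdot r'(x)-\tfrac{1+\tanh x}{2}\sin 2r(x)+\tfrac{1}{\sqrt{2}}(1-\tanh x)^{3/2}\sin r(x)$ from the BVP cancels the two $r'(x)$-linear potential terms. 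What remains should be
\begin{align*}
W'(x)=\tanh x\cdot r'(x)^2+\operatorname{sech}^2 x\cdot\tfrac{1}{2}\sin^2 r(x)+\tfrac{3}{\sqrt{2}}\operatorname{sech}^2 x\,(1-\tanh x)^{1/2}\sin^2\tfrac{r(x)}{2},
\end{align*}
up to checking the exact coefficients from $\tfrac{d}{dx}\tanh x=\operatorname{sech}^2 x$ and $\tfrac{d}{dx}(1-\tanh x)^{3/2}=-\tfrac{3}{2}(1-\tanh x)^{1/2}\operatorname{sech}^2 x$. The key point is that for $x\geq 0$ every term is $\geq 0$: $\tanh x\geq 0$, $\operatorname{sech}^2 x>0$, and $1-\tanh x>0$.

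Next I would extract the dichotomy. Since $W'\geq 0$ on $[0,\infty)$, $W$ is nondecreasing there; it is either strictly increasing or constant on some subinterval. If $W$ is not strictly increasing on all of $[0,\infty)$, then $W'\equiv 0$ on a nontrivial interval, which forces all three nonnegative summands to vanish identically there. From the middle term, $\sin^2 r(x)\equiv 0$, so $r(x)\in\pi\Z$ is constant (being continuous), hence $r'(x)\equiv 0$; from the third term, $\sin^2\tfrac{r(x)}{2}\equiv 0$, which upgrades $r(x)\in 2\pi\Z$. A constant $r\equiv 2k\pi$ indeed solves the BVP (each term of the equation vanishes), and by the uniqueness in Theorem~\ref{ivp} applied at any interior point, $r\equiv 2k\pi$ on the whole line; in particular $W\equiv 0$ globally. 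Conversely, if $r\equiv 2k\pi$ then $W\equiv 0$ by direct substitution. This gives both assertions: $W$ strictly increasing for $x\geq 0$ unless $W\equiv 0$, and the characterization $W\equiv 0\iff r\equiv 2k\pi$.

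The main obstacle is making the ``strictly increasing versus constant'' step airtight: $W'\geq 0$ with $W'$ vanishing at isolated points would still leave $W$ strictly increasing, so I must argue that if $W$ fails to be strictly increasing then $W'$ vanishes on a whole interval — this follows because a nondecreasing function that is not strictly increasing is constant on some nondegenerate interval, on which its derivative is identically zero. After that, the implication $\sin r\equiv 0$ on an interval $\Rightarrow r$ globally constant relies on $r$ being real-analytic (it is, by the Malgrange construction and elliptic regularity in the interior, or simply because the ODE has analytic coefficients away from the singular ends), so that vanishing of $\sin r$ on an interval propagates; alternatively one invokes uniqueness of solutions of the second-order ODE with the data $r=2k\pi$, $r'=0$ at one interior point. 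I would also double-check that the constant $W$ value on such an interval is $0$: on it $r'=0$, $\sin^2 r=0$, $\sin^2\tfrac{r}{2}=0$, so $W=0$ there, and by continuity and monotonicity $W\equiv 0$ for $x\geq 0$, then globally by the propagation argument. The remaining work is purely the bookkeeping of the coefficients in $W'$, which I would present as a one-line ``a straightforward computation using the BVP yields'' followed by the displayed formula.
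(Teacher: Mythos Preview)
Your proof is correct and takes essentially the same approach as the paper: compute $W'$ via the ODE to obtain exactly that sum of three nonnegative terms, then use uniqueness to conclude $r\equiv 2k\pi$ once those terms vanish. The paper's dichotomy is marginally more direct --- it observes that vanishing of $W'$ at a \emph{single} point $x_0>0$ already forces $r'(x_0)=0$ and $r(x_0)=2k\pi$ (since $\tanh x_0$, $\operatorname{sech}^2 x_0$, and $(1-\tanh x_0)^{1/2}$ are all strictly positive there), so no interval-of-constancy step is needed; and for the uniqueness at an interior point you should invoke ordinary Picard--Lindel\"of rather than Theorem~\ref{ivp}, which concerns the singular endpoint $t=0$.
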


\begin{proof}
Using the ODE we obtain 
\begin{align*}
\tfrac{d}{dx}W(x)=\tanh x\cdot r'(x)^2+\mbox{sech}^2x\left(\tfrac{1}{2}\,\sin^2r(x)+\tfrac{3}{\sqrt{2}}(1-\tanh x)^{\frac{1}{2}}\sin^2\tfrac{r(x)}{2}\right)\geq 0
\end{align*}
for all $x\geq 0.$ Either $\tfrac{d}{dx}W(x)>0$ for all $x>0$ and then $W$ increases strictly or there exists a
$x_0>0$ such that $\tfrac{d}{dx}W(x_0)=0$. 
Thus $r'(x_0)=0$ and $r(x_0)=2k\pi$ for an $k\in\Z$. Hence the theorem of Picard-Lindel\"of yields $r\equiv 2k\pi$ and therefore $W\equiv 0$.
\end{proof}

\bigskip

\textit{Bounds for the first derivative of $r$.}
In the next lemma we prove that for each solution $r$ of the BVP the first derivative is bounded by a constant.

\begin{lemma}
\label{bounded2}
If $W(x_0)>1$ for one $x_0\geq 0$ then $\lim_{x\rightarrow\infty}r(x)=\pm\infty$.
In particular, if $\lvert r'(x_0)\lvert >(2(1+\sqrt{2}))^{\frac{1}{2}}$ for a point $x_0\geq 0$ then $\lim_{x\rightarrow\infty}r(x)=\pm\infty$.
\end{lemma}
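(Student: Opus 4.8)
The plan is to exploit the monotonicity of the Lyapunov function $W$ together with the explicit lower bound $h(x,r)\geq -(1+\sqrt 2)$ that is visible from the formula for $h$. First I would record that for all $x\geq 0$ the two weights $\tfrac{1+\tanh x}{2}$ and $\sqrt 2(1-\tanh x)^{3/2}$ are nonnegative and bounded by $1$ and $\sqrt 2$ respectively (the first since $\tanh x\in[0,1)$, the second since $1-\tanh x\in(0,1]$). Hence
\begin{align*}
W(x)-\tfrac12 r'(x)^2 = h(x,r(x)) \geq -\sqrt 2(1-\tanh x)^{3/2}\sin^2\tfrac{r(x)}{2} \geq -\sqrt 2,
\end{align*}
and more crudely $h(x,r)\geq -\sqrt 2 > -(1+\sqrt 2)$, so that $\tfrac12 r'(x)^2 = W(x)-h(x,r(x)) \geq W(x)-\sqrt 2$ always. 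In particular, if $W(x_0)>1$ at some $x_0\geq 0$, then by Lemma \ref{increase2} $W$ is strictly increasing on $[x_0,\infty)$ (the alternative $W\equiv 0$ is excluded since $W(x_0)>1>0$), so $W(x)\geq W(x_0)>1$ for all $x\geq x_0$; consequently $\tfrac12 r'(x)^2 \geq W(x_0)-\sqrt 2$ — but this only gives a positive lower bound on $|r'|$ when $W(x_0)>\sqrt 2$, so a slightly more careful argument is needed to squeeze out the threshold $1$.

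To get the sharp threshold I would instead argue that $r'$ cannot change sign for $x\geq x_0$. Suppose $r'(x_1)=0$ for some $x_1\geq x_0$. Then $W(x_1)=h(x_1,r(x_1))$, and I claim $h(x_1,r(x_1))\leq 1$, which contradicts $W(x_1)\geq W(x_0)>1$. Indeed, writing $c=\tfrac{1+\tanh x_1}{2}\in[\tfrac12,1)$ and $d=\sqrt 2(1-\tanh x_1)^{3/2}\geq 0$, and using $\sin^2 r = 4\sin^2\tfrac r2\cos^2\tfrac r2 = 4\sin^2\tfrac r2(1-\sin^2\tfrac r2)$, set $u=\sin^2\tfrac{r(x_1)}{2}\in[0,1]$; then $h = 4cu(1-u)-du = u(4c-4cu-d)$, a concave quadratic in $u$ on $[0,1]$ whose maximum over $u\in\R$ is $\tfrac{(4c-d)^2}{16c}\leq \tfrac{(4c)^2}{16c}=c<1$ when $4c-d\geq 0$, and is $\leq 0$ otherwise; either way $h(x_1,r(x_1))\leq c<1$. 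Hence no such $x_1$ exists, $r'$ has constant (nonzero) sign on $[x_0,\infty)$, and $r$ is strictly monotone there. It then remains to show $r$ is in fact unbounded, i.e. $\lim_{x\to\infty}r(x)=\pm\infty$.

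For unboundedness I would suppose, for contradiction, that $r$ stays bounded on $[x_0,\infty)$; being monotone it then has a finite limit $r_\infty$, and a standard argument (e.g. applied to $\tfrac12 r'^2 = W - h(x,r)$, where $W$ increases to a limit $W_\infty \in (1,\infty]$ and $h(x,r(x))$ converges since $\tanh x\to 1$, $r(x)\to r_\infty$) shows $r'(x)\to \pm\sqrt{2(W_\infty-h_\infty)}$; but if this limit is nonzero then $r$ is unbounded, a contradiction, and if it is zero then $W_\infty=h_\infty\leq 1$ by the quadratic estimate above, contradicting $W_\infty>1$. Here one must first rule out $W_\infty=\infty$ while $r$ bounded: since $r$ is bounded and $\tfrac12 r'^2 = W-h$, $W$ bounded above follows once $r'$ is bounded, and conversely $r$ bounded and monotone forces $r'$ to fail to stay bounded below away from zero only on a set where it integrates to something finite — cleaner is to note that if $r$ is bounded then $\int_{x_0}^\infty |r'|\,dx < \infty$ is \emph{not} automatic, so I would instead directly integrate the derivative inequality for $W$: from Lemma \ref{increase2}, $\tfrac{d}{dx}W \geq \tanh x\cdot r'^2 \geq \tanh x_0 \cdot r'^2$ for $x\geq x_0\geq $ (say) a fixed positive constant, which after integration bounds $\int r'^2$ in terms of $W$, again forcing a finite limit of $W$ and closing the loop. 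The second statement of the lemma is then immediate: $|r'(x_0)|^2 > 2(1+\sqrt 2)$ gives $W(x_0)=\tfrac12 r'(x_0)^2 + h(x_0,r(x_0)) > (1+\sqrt 2) - \sqrt 2 = 1$.

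The main obstacle is the last paragraph: handling the degenerate point $x_0=0$ (where $\tanh x_0=0$ so the coercive term $\tanh x\cdot r'^2$ in $\tfrac{d}{dx}W$ vanishes) and making the "monotone and bounded $\Rightarrow$ contradiction" step rigorous — one needs to move away from $0$ to a point where $\tanh x$ is bounded below, and carefully argue that the convergence $r'(x)\to 0$ forced by boundedness plus monotonicity is incompatible with $W$ having a limit strictly above $1$. Everything else is the elementary quadratic-in-$\sin^2\tfrac r2$ estimate, which is genuinely routine.
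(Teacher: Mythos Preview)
Your approach is correct in spirit but unnecessarily circuitous, and the acknowledged gap in your third paragraph can be removed by a one-line observation you have already essentially made. The paper's proof is your second-paragraph estimate applied \emph{pointwise} rather than only at zeros of $r'$: your quadratic bound shows $h(x,r)\leq \tfrac{1+\tanh x}{2}<1$ for \emph{every} $x\geq 0$ and every $r\in\R$ (the bound depends only on $x$ and $r$, not on whether $r'$ vanishes). Hence, since $W(x)\geq W(x_0)>1$ for all $x\geq x_0$ by Lemma~\ref{increase2}, one obtains directly
\[
r'(x)^2 \;=\; 2W(x)-2h(x,r(x)) \;\geq\; 2W(x_0)-2 \;>\; 0
\]
for all $x\geq x_0$. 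This is a uniform positive lower bound on $|r'|$, so $r'$ keeps a constant sign and $|r(x)-r(x_0)|\geq \sqrt{2W(x_0)-2}\,(x-x_0)\to\infty$. Your entire third paragraph --- the monotone-plus-bounded contradiction, the worry about $W_\infty=\infty$, the difficulty at $x_0=0$ --- simply disappears.

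Incidentally, the quadratic-in-$u$ computation is also more work than needed: since the second term of $h$ is $-\sqrt2(1-\tanh x)^{3/2}\sin^2\tfrac r2\leq 0$ for $x\geq 0$, one has immediately $h(x,r)\leq \tfrac{1+\tanh x}{2}\sin^2 r\leq 1$. Your derivation of the ``in particular'' clause from $h\geq -\sqrt 2$ is correct and matches the paper's.
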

\begin{proof}
If $W(x_0)>1$ for an $x_0\geq 0$ then Lemma\,\ref{increase2} implies
$W(x)\geq W(x_0)>1$ for all $x\geq x_0$. Since $W(x)=\tfrac{1}{2}r'(x)^2+h(x,r(x))$ we have
$$r'(x)^2 \geq 2W(x_0)-2h(x,r(x))\geq 2W(x_0)-2>0$$
for $x\geq x_0$. This establishes the first claim. Since $\lvert r'(x_0)\lvert >(2(1+\sqrt{2}))^{\frac{1}{2}}$ implies $W(x_0)>1$, the 
second claim is an immediate consequence of this.
\end{proof}

In the next lemma we improve the result of the previous lemma for those $x\geq 0$ for which $g(x)<(2(1+\sqrt{2}))^{\frac{1}{2}}$.

\begin{lemma}
\label{bound}
If $\lvert r'(x_0) \rvert>g(x_0)$ for an $x_0>0$ then $\lim_{x\rightarrow\infty}\pm r(x)=\infty$.
\end{lemma}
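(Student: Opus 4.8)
The plan is to bootstrap from Lemma~\ref{bounded2} using the function $g$ in place of the constant $(2(1+\sqrt{2}))^{1/2}$, exploiting that $g(x)\to(2(1+\sqrt 2))^{1/2}$-type bounds are sharp only in the limit while $g(x)$ may drop below it for finite $x>0$. Suppose $|r'(x_0)|>g(x_0)$ for some $x_0>0$; by symmetry (replacing $r$ by $-r$, which preserves the ODE) we may assume $r'(x_0)>g(x_0)>0$. The first step is to show that the set $\{x\ge x_0 : r'(x)>g(x)\}$ is all of $[x_0,\infty)$, i.e. that once $r'$ exceeds $g$ it can never come back down to it. To do this I would compare the ODE
\begin{align*}
r''(x)=\tanh x\cdot r'(x)-\tfrac{1+\tanh x}{2}\sin 2r(x)+\tfrac{1}{\sqrt 2}(1-\tanh x)^{3/2}\sin r(x)
\end{align*}
with the derivative of $g$. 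At a putative first point $x_1>x_0$ where $r'(x_1)=g(x_1)$ we would have $r'(x)>g(x)$ on $[x_0,x_1)$, hence $r''(x_1)\le g'(x_1)$. The key inequality to verify is that at such a point $r''(x_1)>g'(x_1)$, giving a contradiction; concretely one checks
\begin{align*}
\tanh x\cdot g(x)-\tfrac{1+\tanh x}{2}\sin 2r+\tfrac{1}{\sqrt 2}(1-\tanh x)^{3/2}\sin r > g'(x)
\end{align*}
for all $r\in\R$ and all $x>0$, using $|\sin 2r|\le 1$ and $|\sin r|\le 1$ to bound the oscillatory terms and then reducing to a one-variable inequality in $x$. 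Since $g(x)=\coth x\bigl(\tfrac12(1+\tanh x)+\tfrac{1}{\sqrt2}(1-\tanh x)^{3/2}\bigr)$ was presumably engineered precisely so that $\tanh x\cdot g(x)$ dominates $g'(x)$ plus the maximal size of the nonlinear terms, this should come down to an elementary (if slightly tedious) estimate; this verification is where I expect the real work to lie.

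Once we know $r'(x)>g(x)>0$ for all $x\ge x_0$, the second step is to conclude $r(x)\to+\infty$. It suffices to note that $g(x)$ does not go to $0$ too fast: since $\tanh x\to 1$ we have $g(x)\to \tfrac12\cdot 2 = 1$ as $x\to\infty$ (using $\coth x\to 1$, $\tanh x\to1$, $(1-\tanh x)^{3/2}\to0$), so in particular there is $x_2\ge x_0$ and $c>0$ with $g(x)\ge c$ for $x\ge x_2$, whence $r'(x)\ge c$ on $[x_2,\infty)$ and therefore $r(x)\ge r(x_2)+c(x-x_2)\to+\infty$. (Even without the limit computation, $g>0$ on $(0,\infty)$ and is continuous, so it is bounded below on any $[x_2,\infty)$ provided $\liminf g>0$, which the limit $g(x)\to1$ confirms.) The symmetric case $r'(x_0)<-g(x_0)$ gives $r(x)\to-\infty$ by the same argument applied to $-r$. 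This yields the dichotomy $\lim_{x\to\infty}\pm r(x)=\infty$ as claimed, and strengthens Lemma~\ref{bounded2} because on the region where $g(x)<(2(1+\sqrt2))^{1/2}$ the hypothesis $|r'(x_0)|>g(x_0)$ is weaker than $|r'(x_0)|>(2(1+\sqrt2))^{1/2}$.

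The main obstacle, as indicated, is the differential inequality $r''(x_1)>g'(x_1)$ at a hypothetical crossing point: one must compute $g'$ explicitly and show that $\tanh x\cdot g(x)-g'(x)$ exceeds $\tfrac{1+\tanh x}{2}+\tfrac{1}{\sqrt2}(1-\tanh x)^{3/2}$ (the worst-case contribution of the trigonometric terms) for every $x>0$. Substituting $u=\tanh x\in(0,1)$ should turn this into a rational/algebraic inequality in $u$ that can be cleared of denominators and checked directly, possibly after splitting $(0,1)$ into a couple of subintervals. A secondary, much smaller point is to make the reduction-by-symmetry precise: the ODE in the variable $x$ is odd in $r$ (every term changes sign under $r\mapsto -r$), so $-r$ solves it whenever $r$ does, which legitimizes treating only the case $r'(x_0)>g(x_0)>0$.
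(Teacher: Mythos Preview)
Your approach is correct and will go through, but you have missed a simplification that makes the ``main obstacle'' evaporate. By the very definition of $g$ one has
\[
\tanh x\cdot g(x)=\tfrac{1+\tanh x}{2}+\tfrac{1}{\sqrt{2}}(1-\tanh x)^{3/2},
\]
which is exactly the supremum over $r$ of $\tfrac{1+\tanh x}{2}\sin 2r-\tfrac{1}{\sqrt{2}}(1-\tanh x)^{3/2}\sin r$. Hence at a putative first crossing $r'(x_1)=g(x_1)$ the ODE gives $r''(x_1)\ge 0$, and your needed inequality $r''(x_1)>g'(x_1)$ reduces to $g'(x_1)<0$, i.e.\ to the strict monotonicity of $g$ on $(0,\infty)$; no tedious estimate in $u=\tanh x$ is required.

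The paper exploits this same identity but organizes the argument slightly differently and more directly: rather than tracking the first zero of $r'-g$, it tracks the first zero of $r''$. From the ODE, $r''(x)>0$ is equivalent (for $x>0$) to $r'(x)>\coth x\bigl(\tfrac{1+\tanh x}{2}\sin 2r(x)-\tfrac{1}{\sqrt{2}}(1-\tanh x)^{3/2}\sin r(x)\bigr)$, and the right-hand side is $\le g(x)$. Thus $r'(x_0)>g(x_0)$ forces $r''(x_0)>0$; if $x_1$ were the first point with $r''(x_1)=0$, then $r'$ would be nondecreasing on $[x_0,x_1]$ while $g$ is strictly decreasing, so $r'(x_1)\ge r'(x_0)>g(x_0)\ge g(x_1)$, whence $r''(x_1)>0$ after all. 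This yields $r''>0$ and $r'\ge r'(x_0)>0$ on $[x_0,\infty)$, so $r\to+\infty$. The content is the same as yours (both hinge on $g$ being an exact envelope for the nonlinearity and on $g$ being decreasing), but the paper's version never needs to mention $g'$ or integrate $g$; your route buys nothing extra here, though it would be the natural one if $g$ were not monotone.
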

\begin{proof}
We can assume without loss of generality $r'(x_0)>g(x_0)$ for an $x_0>0$.
If $-r'(x_0)>g(x_0)$ for an $x_0>0$, we consider $-r$ instead of $r$.
Consequently,
\begin{align*}
r'(x_0)> g(x_0)\geq \coth x_0\left(\tfrac{1}{2}(1+\tanh x_0)\sin 2r(x_0)-\tfrac{1}{\sqrt{2}}(1-\tanh x_0)^{\frac{3}{2}}\sin r(x_0)\right).
\end{align*}
Since for $x>0$ the inequality $r''(x)>0$  is equivalent to
\begin{align*}
r'(x)>\coth x\left(\tfrac{1}{2}(1+\tanh x)\sin2r(x)-\tfrac{1}{\sqrt{2}}(1-\tanh x)^{\frac{3}{2}}\sin r(x)\right),
\end{align*}
we get $r''(x_0)>0$. Assume that there exists a point $x_1>x_0$ such that $r''(x)>0$ for all $x\in\left[x_0,x_1\right)$
and $r''(x_1)=0$. Since $g$ decreases on the positive $x$-axis, we get 
$r'(x)\geq r'(x_0)> g(x_0)\geq g(x)$
for $x\in\lbrack x_0,x_1\rbrack$. Therefore
\begin{align*}
r'(x_1)>g(x_1)\geq\coth x_1\big(\tfrac{1}{2}(1+\tanh x_1)\sin2r(x_1)-\tfrac{1}{\sqrt{2}}(1-\tanh x_1)^{\frac{3}{2}}\sin r(x_1)\big).
\end{align*}
Hence $r''(x_1)>0$, which contradicts our assumption.
Consequently, we have $r''(x)>0$ for all $x\geq x_0$ and thus $r'(x)\geq r'(x_0)>0$ for $x\geq x_0$. Hence $\lim_{x\rightarrow\infty}r(x)=\infty$, which establishes the claim.

\smallskip

The second claim follows from the first by considering $-r$ instead of $r$.
\end{proof}

\bigskip 

\textit{Restrictions for $r$.}
Let $d^{+}>0$ be the unique positive solution of $f(x)=g(x)$.
It is straightforward to verify that $f$ increases strictly on the positive $x$-axis, while $g$ decreases strictly in this domain.
Hence $f(x)\geq g(x)$ for all $x\geq d^{+}$.

\smallskip

The next lemma states that the graph of each solution of the BVP has to be contained in a stripe of height $3\pi$. 

\begin{lemma}
\label{dp} 
\begin{compactenum}[(i)]
\item If there exists a point $x_0\geq d^{+}$ with $r(x_0)=(4k+1)\tfrac{\pi}{2}$, $k\in\Z$, and $r'(x_0)\geq 0$ then
$\lim_{x\rightarrow\infty}r(x)=\infty$.
\item If there exists a point $x_0\geq d^{+}$ with $r(x_0)=(4k+3)\tfrac{\pi}{2}$, $k\in\Z$, and $r'(x_0)\leq 0$ then
$\lim_{x\rightarrow\infty}r(x)=-\infty$.
\end{compactenum}
\end{lemma}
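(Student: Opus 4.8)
The plan is to prove (i) and then obtain (ii) by applying (i) to $-r$, exactly as in Lemma~\ref{bound}. So fix a solution $r$ of the ODE and suppose $x_0\geq d^{+}$ with $r(x_0)=(4k+1)\tfrac{\pi}{2}$ and $r'(x_0)\geq 0$. The key observation is that on the value-strip $r\in\bigl((4k+1)\tfrac{\pi}{2},(4k+3)\tfrac{\pi}{2}\bigr)$ we have $\sin 2r>0$ (since $2r\in(2k\pi+\pi,2k\pi+3\pi)$ after the shift, i.e.\ $\sin 2r=-\sin(2r-\pi)$ with $2r-\pi$ running over a full period\,--- one must be a bit careful: on $(4k+1)\tfrac\pi2<r<(4k+3)\tfrac\pi2$ write $r=(4k+1)\tfrac\pi2+u$ with $u\in(0,\pi)$; then $\sin r=\cos((4k+1)\tfrac\pi2 - \tfrac\pi2)\cdots$; the correct signs are $\sin r$ has the sign of $(-1)^{\text{even}}$ on part of the interval and $\sin 2r$ changes sign at $u=\tfrac\pi2$). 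The cleaner route is to track the quantity $r'(x)-g(x)$ rather than the sign of $r''$ directly.

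First I would establish a one-sided invariance statement along the lines of Lemma~\ref{bound}: I claim that if $r'(x_0)\geq g(x_0)$ is replaced by the boundary situation at a half-integer multiple of $\pi/2$ where $r'\geq 0$, then in fact $r$ enters the region where $r'(x)>g(x)$ and stays there. Concretely, at $x_0$ we have $r(x_0)=(4k+1)\tfrac{\pi}{2}$, so $\sin 2r(x_0)=0$ and $\sin r(x_0)=(-1)^{k}\cdot(\pm1)$; the right-hand side of the $r''>0$ criterion from Lemma~\ref{bound}, namely $\coth x_0\bigl(\tfrac12(1+\tanh x_0)\sin 2r(x_0)-\tfrac1{\sqrt2}(1-\tanh x_0)^{3/2}\sin r(x_0)\bigr)$, reduces to $\mp\tfrac1{\sqrt2}\coth x_0(1-\tanh x_0)^{3/2}$, which is $\leq 0$ precisely when $\sin r(x_0)\geq 0$. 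So I need the parity bookkeeping to confirm that at $r=(4k+1)\tfrac\pi2$ with $r'\ge 0$ entering upward, the relevant sign makes $r''(x_0)\ge 0$; then $r$ increases and I can run the barrier argument of Lemma~\ref{bound} verbatim: as long as $r\in\bigl((4k+1)\tfrac\pi2,(4k+3)\tfrac\pi2\bigr)$ one checks $r''>0$ cannot first vanish, because $g$ is decreasing and $f(x)\ge g(x)$ for $x\ge d^{+}$ forces $r'(x)\ge r'(x_0)\ge f(x_0)\ge f(x)\ge$ the bound coming from $h$, keeping $r'-g>0$. The role of $d^{+}$ and the inequality $f\ge g$ on $[d^{+},\infty)$ is exactly to supply the lower barrier that prevents $r'$ from dropping back to $g$.

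The subtle point \,--- and the main obstacle \,--- is the transition at the upper edge $r=(4k+3)\tfrac{\pi}{2}$: a priori $r$ could approach this value and one must show it crosses it (or that, combined with $r'>0$ there, one is back in case (i) shifted by one, and the argument repeats, giving $r\to\infty$). Here I would argue that once $r''>0$ has been maintained up to the first time $r$ hits $(4k+3)\tfrac\pi2$, we have $r'$ bounded below by the positive constant $r'(x_0)$ (in fact by $f(d^{+})=g(d^{+})>0$), so $r$ actually reaches $(4k+3)\tfrac\pi2$ in finite $x$, with $r'>0$ there; but $r(x_1)=(4k+3)\tfrac\pi2=(4(k+1)+(-1))\tfrac\pi2$, i.e.\ $=(4k'+ 3)\tfrac\pi2$ is \emph{not} of the form $(4k'+1)\tfrac\pi2$, so one cannot directly re-invoke (i). Instead I would show $\sin 2r$ and $\sin r$ have signs at $(4k+3)\tfrac\pi2$ that again give $r''>0$ when $r'>0$, so the open region $\bigl((4k+3)\tfrac\pi2,(4k+5)\tfrac\pi2\bigr)$ is also forward-invariant under "$r'-g>0$," and by induction $r'(x)\ge r'(x_0)>0$ for all $x\ge x_0$, hence $\lim_{x\to\infty}r(x)=\infty$. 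The height-$3\pi$ phrasing in the lemma's preamble is then a corollary when this is combined with the companion statement (ii) and an analogous lower-edge lemma: a solution of the BVP cannot have $r$ leave a strip of three consecutive $\pi/2$-steps without escaping to $\pm\infty$, which is incompatible with the BVP's finite boundary value $(2\ell+1)\tfrac\pi2$.

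In summary: the skeleton is (1) compute the signs of $\sin r(x_0)$, $\sin 2r(x_0)$ at the relevant quarter-period points to get $r''(x_0)\ge 0$ from the Lemma~\ref{bound} criterion; (2) invoke monotonicity of $g$, monotonicity of $f$, and $f\ge g$ on $[d^{+},\infty)$ to show the first time $r''$ would vanish leads to a contradiction, so $r''>0$ and $r'\ge r'(x_0)$ persist; (3) check that crossing each successive quarter-period keeps the sign condition, so the conclusion $r'\ge r'(x_0)>0$ — hence $r\to\infty$ — propagates; (4) deduce (ii) by replacing $r$ with $-r$. I expect step (1), the careful sign bookkeeping at $(4k+1)\tfrac\pi2$ versus $(4k+3)\tfrac\pi2$, together with step (3)'s propagation across edges, to be where essentially all the real content sits; the rest mirrors Lemma~\ref{bound}.
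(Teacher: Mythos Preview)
Your reduction of (ii) to (i) via $r\mapsto -r$ is correct and matches the paper. The difficulty is entirely in (i), and there your argument has a genuine gap.

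The claim ``$r'(x)\ge r'(x_0)\ge f(x_0)$'' is unjustified: the hypothesis gives only $r'(x_0)\ge 0$, while $f(x_0)>0$ for $x_0\ge d^{+}$. Likewise $g(x_0)>0$, so at the starting point you do \emph{not} have $r'(x_0)>g(x_0)$, and you cannot launch the barrier argument of Lemma~\ref{bound} directly. Your sign analysis does work on the strip $\tfrac{\pi}{2}<r<\pi$: there $\sin 2r<0$ and $\sin r>0$, so the right-hand side of the $r''>0$ criterion is negative and $r'\ge 0$ suffices. But once $r$ crosses $\pi$, on $(\pi,\tfrac{3\pi}{2})$ one has $\sin 2r>0$ and $\sin r<0$, so the right-hand side becomes \emph{positive}; maintaining $r''>0$ now requires a quantitative lower bound $r'>g$, which you have not produced. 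So step~(3) of your skeleton fails at the very first edge $r=\pi$, and the claimed lower bound ``$r'\ge f(d^{+})=g(d^{+})$'' has no source.

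The paper supplies exactly this missing lower bound via the Lyapunov function $W$ of Lemma~\ref{increase2}. First (as you do) one uses the ODE at $r(x_0)=\tfrac{\pi}{2}$, $r'(x_0)\ge 0$ to get $r''(x_0)>0$, and the sign argument then drives $r$ up to a first point $x_1>x_0$ with $r(x_1)=\pi$ and $r'(x_1)>0$. Now the energy comparison $W(x_1)\ge W(x_0)$ gives, after evaluating $h$ at $r=\tfrac{\pi}{2}$ and $r=\pi$,
\[
r'(x_1)^2 \ \ge\ (1+\tanh x_0) + 2^{3/2}(1-\tanh x_1)^{3/2} - \sqrt{2}(1-\tanh x_0)^{3/2}\ \ge\ f(x_0)^2.
\]
Since $x_0\ge d^{+}$ one has $f(x_0)\ge g(x_0)>g(x_1)$, hence $r'(x_1)>g(x_1)$, and \emph{now} Lemma~\ref{bound} applies at $x_1$ to give $\lim_{x\to\infty}r(x)=\infty$. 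In short: the ODE sign argument only gets you from $\tfrac{\pi}{2}$ to $\pi$; the Lyapunov comparison converts the potential energy stored at $r=\tfrac{\pi}{2}$ into the kinetic bound $r'(x_1)\ge f(x_0)$ needed to trigger Lemma~\ref{bound}. This is the step your proposal is missing.
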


\begin{proof}
It is sufficient to prove the first statement:
if $r(x_0)=(4k+3)\frac{\pi}{2}$, $k\in\Z$, and $r'(x_0)\leq 0$ for an $x_0\geq d^{+}$ then $-r(x_0)=-(4k+3)\frac{\pi}{2}=(4(-k-1)+1)\frac{\pi}{2}$ and $-r'(x_0)\geq 0$.
Applying the first result to $-r$ thus yields the second statement.

\smallskip

Assume that there exists a point $x_0\geq 0$ with $r(x_0)=(4k+1)\frac{\pi}{2}$, $k\in\Z$, and $r'(x_0)\geq 0$.
If $r$
is a solution of the ODE, so are $r+2\pi j$, $j\in\Z$. Consequently, we may assume without loss of generality that $k=0$. 
Since $r(x_0)=\tfrac{\pi}{2}$ and $r'(x_0)\geq 0$ the ODE implies $r''(x_0)>0$. Consequently, there exists a point $x_2>x_0$ such that $\frac{\pi}{2}<r(x_2)<\pi$ and $r'(x_2)>0$.
The ODE thus implies the existence of a point $x_1>x_0$ with $r(x_1)=\pi$ and $r'(x_1)\geq 0$. Since $r'(x_1)=0$ would imply $r\equiv \pi$ we have $r'(x_1)>0$. Thus by Lemma\,\ref{increase2} we have $W(x_1)\geq W(x_0)$. This in turn implies
\begin{align*}
r'(x_1)^2\geq (1+\tanh x_0)+2^{3/2}(1-\tanh x_1)^{\frac{3}{2}}-\sqrt{2}(1-\tanh x_0)^{\frac{3}{2}}\geq f(x_0)^2.
\end{align*}
Since $r'(x_1)>0$ we get $r'(x_1)\geq f(x_0)\geq g(x_0)>g(x_1)$ and thus
Lemma\,\ref{bound} establishes the claim.
\end{proof}

In the following lemma we prove that $\lim_{x\rightarrow\infty}r(x)$ can only attain certain values.

\begin{lemma}
\label{limit3}
Either $\lim_{x\rightarrow\infty}r(x)=k\tfrac{\pi}{2}$ for an $k\in\Z$ or $\lim_{x\rightarrow\infty} r(x)=\pm\infty$.
\end{lemma}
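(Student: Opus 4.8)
The plan is to show that the limit $L := \lim_{x\to\infty} r(x)$, if it exists in $\R$, must be a multiple of $\pi/2$, and to rule out the remaining possibility that $r$ oscillates without limit. First I would invoke Lemma \ref{bounded2}: if $W$ ever exceeds $1$, then $r'(x)^2$ is bounded below by a positive constant for all large $x$, forcing $r(x)\to\pm\infty$, and we are in the second case. So from now on assume $W(x)\le 1$ for all $x\ge 0$ — equivalently $W$ is bounded (by Lemma \ref{increase2} it is monotone, hence convergent, to some $W_\infty \in [0,1]$). In particular $r'(x)^2 = 2W(x) - 2h(x,r(x))$ stays bounded, and since as $x\to\infty$ one has $\tanh x \to 1$ so $h(x,r)\to \sin^2 r$, we get that $r'(x)^2 - (2W_\infty - 2\sin^2 r(x))$ tends to $0$.

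Next I would argue that $r$ actually has a limit in this regime. The key is the energy-type identity behind Lemma \ref{increase2}: $\frac{d}{dx}W = \tanh x\, r'(x)^2 + \mathrm{sech}^2 x\,(\cdots)$, and since $W$ converges, $\int^\infty \tanh x\, r'(x)^2\,dx < \infty$, hence $\int^\infty r'(x)^2\,dx < \infty$ (the tail integral; $\tanh x \to 1$). Combined with the boundedness of $r''$ — which follows from the ODE in the $x$-variable once $r$ and $r'$ are bounded, noting all coefficients $\tanh x$, $(1-\tanh x)^{3/2}$ are bounded — a standard Barb\u{a}lat-type argument gives $r'(x)\to 0$. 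Then from $r'(x)^2 \to 2W_\infty - 2\sin^2 r(x)$ we deduce $\sin^2 r(x) \to W_\infty$, so $r(x)$ stays in a bounded set (it cannot run to $\pm\infty$ while $r'\to 0$ and $\sin^2 r$ converges to a constant $<1$ unless that constant allows it; if $W_\infty < 1$ then $\sin^2 r$ is bounded away from $1$ eventually, pinning $r$ near points where $\sin^2 = W_\infty$, and if $W_\infty = 1$ a finer look at the ODE is needed). Since $r$ is bounded with $r'\to 0$ and is a solution of an autonomous-at-infinity second-order ODE, its $\omega$-limit set is a connected union of equilibria of the limiting equation $r'' + \sin 2r = 0$ restricted to $r'=0$, i.e.\ a subset of $\{k\pi/2\}$; being connected and discrete it is a single point, so $L = k\pi/2$ for some $k \in \Z$.

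Finally I would assemble the two cases: if $\limsup|r'| $ forces $W>1$ somewhere we get $r\to\pm\infty$; otherwise the above gives $r\to k\pi/2$. I expect the main obstacle to be the last step — turning "$r'\to 0$ and $r$ bounded" into "$r$ converges to a single equilibrium" — because a priori the $\omega$-limit set could be an interval of equilibria; here one must use that the equilibria $k\pi/2$ of the limiting ODE are isolated, together with the fact that $W_\infty = \sin^2(k\pi/2) \in \{0,1\}$ forces either $L \in \pi\Z$ (when $W_\infty = 0$) or $L \in \pi/2 + \pi\Z$ (when $W_\infty = 1$), each of which is discrete; connectedness of $\omega$-limit sets of bounded trajectories then closes the argument. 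The case $W_\infty = 1$, where $\sin^2 r \to 1$ but $r'$ could in principle linger, is the delicate sub-case and may require re-examining the sign of $r''$ near $r = (2k+1)\pi/2$ via the ODE, as was done in Lemma \ref{dp}.
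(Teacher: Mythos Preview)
Your approach is correct and shares the paper's backbone --- the monotone Lyapunov function $W$ --- but differs in how the two branches are handled. The paper simply dichotomizes on whether $\lim_{x\to\infty} r'(x) = 0$: if yes, it notes that $\lim \sin^2 r$ exists, asserts that $\lim r$ then exists and is finite, and reads off $\sin 2L = 0$ directly from the ODE (since $r''(x) \to -\sin 2L$ would contradict $r' \to 0$ unless $L = k\pi/2$); if no, it argues $W' \not\to 0$, hence $W \to \infty$, hence $r'^2 \to \infty$ and $r \to \pm\infty$. Your Barb\u{a}lat argument (via $\int^\infty r'^2 < \infty$ from the convergence of $W$, together with $r''$ bounded --- note that boundedness of $r''$ only needs $r'$ bounded, not $r$, since the nonlinear terms in the ODE are $\sin r$ and $\sin 2r$) is more careful than the paper's case split, which glosses over the possibility that $\lim r'$ fails to exist; this is a genuine improvement. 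On the other hand, your $\omega$-limit/asymptotically-autonomous machinery for the convergence of $r$ is heavier than needed: once $r' \to 0$ and $\sin^2 r \to W_\infty$, the level set $\{y : \sin^2 y = W_\infty\}$ is discrete with uniform positive gap, so a continuous $r$ that is eventually confined to any sufficiently small neighborhood of it must stay in a single component and hence converge; then the paper's one-line ODE argument forces $L \in \tfrac{\pi}{2}\Z$. In particular, your anticipated difficulty with $W_\infty = 1$ does not materialize: passing from a neighborhood of $(2k+1)\tfrac{\pi}{2}$ to one of $(2k+3)\tfrac{\pi}{2}$ would force $\sin^2 r$ through $0$, contradicting $\sin^2 r \to 1$.
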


\begin{proof}
If $r$ is constant then the ODE implies $r\equiv j\pi$ for a $j\in\Z$ and thus $\lim_{x\rightarrow\infty}r(x)=j\pi$.
Therefore we may assume that $r$ is non-constant.
Hence $W$ increases strictly by Lemma\,\ref{increase2}. In particular $\lim_{x\rightarrow\infty}W(x)$ exists, where this limit might possibly be $\infty$.

\smallskip

Let us first assume that $\lim_{x\rightarrow\infty} r'(x)=0$.
Then $\lim_{x\rightarrow\infty}W(x)=\lim_{x\rightarrow\infty}\sin^2r(x)$ exists, which in turn implies that $\lim_{x\rightarrow\infty}r(x)$ exists and is finite. Thus the ODE yields
$\lim_{x\rightarrow\infty}r''(x)=-\sin(2\lim_{x\rightarrow\infty}r(x))$.
Consequently, $\lim_{x\rightarrow\infty}r(x)=k\tfrac{\pi}{2}$ for an $k\in\Z$ since otherwise we would obtain a contradiction to
the assumption $\lim_{x\rightarrow\infty} r'(x)=0$.

\smallskip

Next we assume $\lim_{x\rightarrow\infty} r'(x)\neq 0$, which implies $\lim_{x\rightarrow\infty}\tfrac{d}{dx}W(x)\neq 0$. 
Since $W$ increases strictly, we get $\lim_{x\rightarrow\infty}W(x)=\infty$. This in turn implies $\lim_{x\rightarrow\infty} r'(x)^2=\infty$.
Thus for every $\epsilon>0$ there exists a point $x_0\in\R$ such that $\lvert r'(x)\lvert > \epsilon$ for all $x>x_0$.
Consequently, $\lim_{x\rightarrow\infty} r(x)=\pm\infty$.
\end{proof}

The next lemma should be considered as completion of Lemma\,\ref{dp}:
 we deal with the cases where there exists a point $x_0\geq d^{+}$ with\\\hspace{1cm} (1)
$r(x_0)=(4k+1)\tfrac{\pi}{2}$, $k\in\Z$, and $r'(x_0)<0$;\\\hspace{1cm} 
(2) $r(x_0)=(4k+3)\tfrac{\pi}{2}$, $k\in\Z$, and $r'(x_0)>0$.

\begin{lemma}
\label{limit2}
The following two statements hold:
\begin{compactenum}[(1)]
\item If there exists a point $x_0\geq d^{+}$ with $r(x_0)=(4k+1)\tfrac{\pi}{2}$, $k\in\Z$, and $r'(x_0)<0$ then
$\lim_{x\rightarrow\infty}r(x)=\pm\infty$ or $\lim_{x\rightarrow\infty}r(x)=(4k+1)\tfrac{\pi}{2}$. 
\item If there exists a point $x_0\geq d^{+}$ with $r(x_0)=(4k+3)\tfrac{\pi}{2}$, $k\in\Z$, and $r'(x_0)>0$ then
$\lim_{x\rightarrow\infty}r(x)=\pm\infty$ or $\lim_{x\rightarrow\infty}r(x)=(4k+3)\tfrac{\pi}{2}$.
\end{compactenum}
\end{lemma}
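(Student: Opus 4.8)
By the symmetry $r\mapsto -r$ used already in Lemma \ref{dp} and Lemma \ref{bound}, it suffices to prove statement (1); statement (2) then follows by replacing $r$ with $-r$, which sends $(4k+3)\tfrac{\pi}{2}$ to $(4(-k-1)+1)\tfrac{\pi}{2}$ and reverses the sign of $r'(x_0)$. As in Lemma \ref{dp} we may also use the $2\pi j$-translation invariance of the ODE to reduce to the case $k=0$, so that $r(x_0)=\tfrac{\pi}{2}$ and $r'(x_0)<0$ at some $x_0\geq d^{+}$.

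The plan is to track the solution to the right of $x_0$ and show it stays trapped in the strip $0\le r\le \tfrac{\pi}{2}$ unless it escapes to $\pm\infty$. First I would record that at $x_0$ the ODE gives $r''(x_0)>0$ (same computation as in Lemma \ref{dp}: at $r=\tfrac{\pi}{2}$ the term $\tfrac{1+\tanh x}{2}\sin 2r$ vanishes and $-\tfrac{1}{\sqrt 2}(1-\tanh x)^{3/2}\sin r<0$, so $r''=\tanh x\, r' -(\text{that})$ has sign dictated carefully — actually I would instead use the criterion stated in the proof of Lemma \ref{bound}, that for $x>0$ one has $r''(x)>0$ iff $r'(x)>\coth x(\tfrac12(1+\tanh x)\sin 2r-\tfrac1{\sqrt2}(1-\tanh x)^{3/2}\sin r)$; at $r=\tfrac\pi2$ the right side is $>0$ while $r'(x_0)<0$, so in fact $r''(x_0)<0$, and $r$ is \emph{decreasing with concave-down profile} at $x_0$). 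So just after $x_0$ we have $r<\tfrac{\pi}{2}$ and $r'<0$. Now I distinguish cases on what happens next. Either $r'$ stays negative and bounded away from $0$ on $[x_0,\infty)$, forcing $r\to-\infty$; or $r'\to 0$ and $r$ decreases to a finite limit, which by Lemma \ref{limit3} must be of the form $m\tfrac\pi2$, and since $r$ has stayed $<\tfrac\pi2$ and (to be shown) $\ge 0$, the only option in the relevant range is $r\to 0$ — but I should check whether the hypotheses actually allow pinning it to $(4k+1)\tfrac\pi2=\tfrac\pi2$: the statement permits the limit to be $(4k+1)\tfrac\pi2$ itself, which happens exactly when $r'$ returns to $0$ at a point where $r$ has come back up to $\tfrac\pi2$. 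So the real dichotomy is: either $r$ re-crosses the level $\tfrac\pi2$ (with $r'\ge 0$ there, hence $r'>0$ since $r'=0$ would force $r\equiv\tfrac\pi2$, contradiction, and then Lemma \ref{dp}(i) gives $r\to\infty$), or $r$ never returns to $\tfrac\pi2$ and stays $<\tfrac\pi2$ forever, in which case by Lemma \ref{limit3} and monotonicity considerations $\lim r$ is finite $=m\tfrac\pi2$ with $m\le 1$; one then rules out the oscillatory finite limits using the Lyapunov function $W$ exactly as in Lemma \ref{limit3}, and if the limit is $\tfrac\pi2$ itself this is the allowed second alternative, while if it drops below it must, again via the $r''$-sign criterion and $W$, either settle at $0$ (allowed, since $0\equiv (4k+1)\tfrac\pi2$? no — $0$ is $\tfrac\pi2\cdot 0$, not of the form $(4k+1)\tfrac\pi2$).

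Let me reorganize the conclusion cleanly: I claim that for $x\ge x_0$, as long as $r'(x)\le 0$ one has $0< r(x)\le\tfrac\pi2$, and moreover if $r$ ever re-attains $\tfrac\pi2$ then Lemma \ref{dp}(i) applies and $r\to\infty$. So suppose $r$ never re-attains $\tfrac\pi2$; then either (a) $r$ decreases monotonically for all $x\ge x_0$, with a finite limit $L\in[0,\tfrac\pi2)$ by boundedness, or (b) $r'$ changes sign. In case (b), at the first point $x_1>x_0$ with $r'(x_1)=0$ we have $r(x_1)\in(0,\tfrac\pi2)$ with $r(x_1)\ne 0$ (else $r\equiv 0$, contradicting $r(x_0)=\tfrac\pi2$) and $r(x_1)\ne$ a multiple of $\pi$; but the $r''$-criterion at $r\in(0,\tfrac\pi2)$ with $r'=0$ gives $r''(x_1)<0$, so $x_1$ is a strict local max and $r$ continues to decrease — so in fact $r'$ cannot change from $-$ to $+$ while $0<r<\tfrac\pi2$, pushing us back to the monotone case (a). In case (a) with finite limit $L$, Lemma \ref{limit3} forces $L=m\tfrac\pi2$; since $0\le L<\tfrac\pi2$ we get $L=0$. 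It remains to eliminate $L=0$ unless it is compatible — but wait, the lemma's conclusion is "$\lim r=\pm\infty$ or $\lim r=(4k+1)\tfrac\pi2=\tfrac\pi2$"; the limit $0$ is neither. So I must show $L=0$ is impossible: near $r=0$ the ODE linearizes with the $\sin r$ term giving a definite sign, and $W$ is strictly increasing and positive (Lemma \ref{increase2}, since $r\not\equiv 2k\pi$), while $W(x)=\tfrac12 r'(x)^2+h(x,r(x))\to h(\infty,0)\cdot(\text{value})$; as $x\to\infty$, $\tanh x\to 1$, so $h(x,r)\to \sin^2 r\to 0$ and $r'\to 0$, giving $W\to 0$, contradicting $W$ strictly increasing and positive. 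Hence $L=0$ is impossible, so in case (a) we cannot have a finite limit in $[0,\tfrac\pi2)$ at all, meaning $r$ must eventually leave the strip — but it stays $\le\tfrac\pi2$ and $>0$ as long as $r'\le 0$, the contradiction being resolved only by $r\to-\infty$ (if $r$ exits below $0$) which by Lemma \ref{limit3} is $\lim r=-\infty$. Assembling: the outcomes are $r\to\infty$, $r\to-\infty$, or $\lim r=\tfrac\pi2$, which is statement (1) for $k=0$.

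The main obstacle I anticipate is the bookkeeping of signs in the "return to $\tfrac\pi2$" step versus the "stays below" step — in particular making airtight that $r'$ cannot oscillate indefinitely in the open strip $(0,\tfrac\pi2)$ without either re-hitting $\tfrac\pi2$ or converging, and the $W\to 0$ contradiction ruling out a spurious limit at $0$. The sign analysis via the $r''>0 \iff r'>\coth x(\cdots)$ criterion from the proof of Lemma \ref{bound}, together with strict monotonicity of $W$, should handle this, but the case split must be stated in the right order (first: does $r$ ever return to the level $(4k+1)\tfrac\pi2$ with nonnegative slope? if yes, invoke Lemma \ref{dp}(i); if no, use $W$ and Lemma \ref{limit3}), and I would want to double-check the boundary behavior $h(x,r)\to 0$ as $x\to\infty$ for $r$ near $0$ so that the $W$-contradiction is valid.
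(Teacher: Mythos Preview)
Your reductions (symmetry $r\mapsto -r$, translation by $2\pi j$, reducing to $k=0$) match the paper's, and your case ``$r$ re-attains $\tfrac{\pi}{2}$ with $r'\geq 0$, then apply Lemma~\ref{dp}'' is exactly the paper's first case. Your argument ruling out $\lim r=0$ via $W(x_0)>0$ and monotonicity of $W$ is also the paper's. But there are two problems.

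First, the attempted monotonicity argument is both unnecessary and incorrect. You claim that at any critical point $x_1$ with $r(x_1)\in(0,\tfrac{\pi}{2})$ one has $r''(x_1)<0$. From the ODE at $r'=0$,
\[
r''(x_1)=\sin r(x_1)\Bigl(-(1+\tanh x_1)\cos r(x_1)+\tfrac{1}{\sqrt2}(1-\tanh x_1)^{3/2}\Bigr),
\]
and for $r(x_1)$ close to $\tfrac{\pi}{2}$ the factor $\cos r(x_1)$ is small, so the bracket can be positive; your sign claim fails there. The paper does not attempt to prove monotonicity at all: its third case is simply ``$0<r(x)<\tfrac{\pi}{2}$ for all $x>x_0$'', after which Lemma~\ref{limit3} forces $\lim r\in\{0,\tfrac{\pi}{2}\}$ directly.

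Second, and more seriously, you do not actually handle the case where $r$ exits the strip through $0$. You write ``the contradiction being resolved only by $r\to-\infty$ (if $r$ exits below $0$)'', but this is an assertion, not an argument: Lemma~\ref{limit3} alone does not exclude the solution from, say, later settling at $-\tfrac{\pi}{2}$. This is precisely where the hypothesis $x_0\geq d^{+}$ does real work. The paper's argument is: if $r(x_2)=0$ with $r'(x_2)\leq 0$, then $W(x_2)\geq W(x_0)$ gives
\[
r'(x_2)^2\geq (1+\tanh x_0)-\sqrt{2}(1-\tanh x_0)^{3/2}+2^{3/2}(1-\tanh x_2)^{3/2}\geq f(x_0)^2,
\]
hence $-r'(x_2)\geq f(x_0)\geq g(x_0)>g(x_2)$ (the middle inequality being the \emph{definition} of $d^{+}$), and then Lemma~\ref{bound} forces $r\to-\infty$. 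Without this Lyapunov comparison and the $f\geq g$ inequality you have no mechanism to conclude divergence once $r$ crosses~$0$.

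So the clean trichotomy is: (i) $r$ returns to $\tfrac{\pi}{2}$ $\Rightarrow$ Lemma~\ref{dp}; (ii) $r$ hits $0$ $\Rightarrow$ $W$-comparison $+$ Lemma~\ref{bound}; (iii) $r$ stays in $(0,\tfrac{\pi}{2})$ $\Rightarrow$ Lemma~\ref{limit3} and rule out $0$ via $W(x_0)>0$. Drop the monotonicity detour and supply the missing argument for~(ii).
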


\begin{proof}
As in the proof of Lemma\,\ref{dp} one sees that the first claim implies the second claim.

\smallskip

In what follows we assume that there exists a point $x_0\geq d^{+}$ with $r(x_0)=(4k+1)\tfrac{\pi}{2}$, $k\in\Z$, and $r'(x_0)<0$.
If $r$ solves the ODE, so does $r+2\pi j$, $j\in\Z$. Thus we may assume $k=0$, i.e., $r(x_0)=\tfrac{\pi}{2}$.
Then either of the following three cases occurs:
\begin{compactenum}[(1)]
\renewcommand{\labelenumi}{(\roman{enumi})}
\item there exists an $x_1>x_0$ such that $r(x_1)=\tfrac{\pi}{2}$ and $r'(x_1)\geq0$,
\item there exists an $x_2>x_0$ such that $r(x_2)=0$ and $r'(x_2)\leq 0$.
\item we have $0<r(x)<\tfrac{\pi}{2}$ for all $x>x_0$,
\end{compactenum}

If the first case arises, then Lemma\,\ref{dp} implies 
$\lim_{x\rightarrow\infty}r(x)=\infty$. 
Next assume that the second case occurs.
Since $W$ increases strictly we get $W(x_0)<W(x_2)$ which implies $-r'(x_2)\geq f(x_0)>g(x_0)>g(x_2)$.
Thus Lemma\,\ref{bound} implies $\lim_{x\rightarrow\infty}r(x)=-\infty$. 

\smallskip

Finally, we deal with the third case. By Lemma\,\ref{limit3} we have $\lim_{x\rightarrow\infty}r(x)=\tfrac{\pi}{2}$ or $\lim_{x\rightarrow\infty}r(x)=0$. 
The latter case cannot occur:
from $x_0\geq d^{+}$ and $r(x_0)=\tfrac{\pi}{2}$ we deduce $h(x_0,r(x_0))>0$ and thus
$W(x_0)>0$. Consequently, Lemma\,\ref{increase2} implies that $\lim_{x\rightarrow\infty}r(x)=0$ is not possible and thus we have $\lim_{x\rightarrow\infty}r(x)=\tfrac{\pi}{2}$.

\smallskip

The second statement of the lemma is proved analogously.
\end{proof}

Using the preceding lemma we show that each solution of the ODE with $\lim_{x\rightarrow\infty}r(x)=k\pi$ oscillates infinitely many times around $k\pi$.
This result allows us later on to show that none of the constructed solutions $r$ of the BVP can satisfy $\lim_{x\rightarrow\infty}r(x)=k\pi$.

\begin{lemma}
\label{streifen}
If  $\lim_{x\rightarrow\infty}r(x)=k\pi$ for an $k\in\Z$ then $r$ oscillates infinitely many times around $k\pi$.
\end{lemma}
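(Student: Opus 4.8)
The plan is to argue by contradiction, linearizing the ODE about the level $r=k\pi$ and running a Sturm-type oscillation argument; the crucial feature is that the limiting linear equation $u''-u'+2u=0$ has the complex characteristic roots $\tfrac12\pm\tfrac{\sqrt7}{2}\,\mathrm i$, so its solutions have infinitely many zeros. First I would dispose of the trivial case: if $r$ is constant the ODE forces $r\equiv j\pi$, so we may assume $r$ is non-constant, as in the proof of Lemma~\ref{limit3}. Assume, for contradiction, that $r$ does not oscillate infinitely often around $k\pi$; then $r-k\pi$ changes sign only finitely many times, so there is an $x_0$ on which $r-k\pi$ has one weak sign. Since $r\mapsto 2k\pi-r$ sends solutions of the ODE to solutions and interchanges the two sides of $k\pi$, I may assume $r\geq k\pi$ on $[x_0,\infty)$. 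If $r(x_\ast)=k\pi$ at some interior point $x_\ast>x_0$, it is a minimum, so $r'(x_\ast)=0$, and the ODE then gives $r''(x_\ast)=0$ as well; since the constant $k\pi$ is also a solution, uniqueness (Picard--Lindel\"of) forces $r\equiv k\pi$, contradicting non-constancy. Hence $u:=r-k\pi$ satisfies $u>0$ on $(x_0,\infty)$ and, by hypothesis, $u(x)\to 0$ as $x\to\infty$.

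Next I would regard $u$ as a solution of a \emph{linear} second order equation. Writing $\sin 2r=\tfrac{\sin 2u}{u}\,u$ and $\sin r=(-1)^k\tfrac{\sin u}{u}\,u$, the ODE for $u$ becomes
\begin{align*}
u''-\tanh x\cdot u'+b(x)\,u=0,\qquad b(x):=\tfrac{1+\tanh x}{2}\cdot\tfrac{\sin 2u(x)}{u(x)}-\tfrac{(-1)^k}{\sqrt2}(1-\tanh x)^{\frac32}\cdot\tfrac{\sin u(x)}{u(x)},
\end{align*}
where $b$ is continuous on $[x_0,\infty)$ because $u>0$ there. Since $u(x)\to 0$ we have $\tfrac{\sin 2u}{u}\to 2$ and $\tfrac{\sin u}{u}\to 1$, and together with $\tanh x\to 1$ and $(1-\tanh x)^{3/2}\to 0$ this gives $b(x)\to 2$. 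Eliminating the first-order term by the substitution $u=(\cosh x)^{1/2}w$ turns the equation into
\begin{align*}
w''+q(x)\,w=0,\qquad q(x):=b(x)+\tfrac12-\tfrac34\tanh^2x,
\end{align*}
so $q(x)\to 2+\tfrac12-\tfrac34=\tfrac74>0$.

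To finish, since $q(x)\to\tfrac74$ there is $x_1\geq x_0$ with $q(x)\geq 1$ for all $x\geq x_1$; comparing $w''+q(x)w=0$ with $\zeta''+\zeta=0$ via the Sturm comparison theorem, $w$ has a zero between any two consecutive zeros of $\sin(x-x_1)$, hence infinitely many zeros in $[x_1,\infty)$. But $w=(\cosh x)^{-1/2}u$ has the same sign as $u>0$, so $w$ never vanishes on $(x_0,\infty)$ --- a contradiction. Therefore $r$ must oscillate infinitely many times around $k\pi$. I expect the one delicate step to be the exclusion of tangential contact with the level $k\pi$ (the implication $u(x_\ast)=u'(x_\ast)=0\Rightarrow u''(x_\ast)=0$, followed by uniqueness), which is also why the non-constancy hypothesis must be stated explicitly, since $r\equiv k\pi$ is a genuine solution when $k$ is even; the remaining ingredients (the asymptotics $b(x)\to 2$, the reduction to normal form, and the Sturm comparison) are routine.
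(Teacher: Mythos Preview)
Your proof is correct and takes a genuinely different route from the paper's. The paper argues directly with the nonlinearity: after invoking Lemmas~\ref{dp} and~\ref{limit2} to confine $r$ to the strip $((2k-1)\tfrac{\pi}{2},(2k+1)\tfrac{\pi}{2})$ for $x\geq d^{+}$, it shows (for $k=0$, say) that $r$ cannot approach $0$ from one side, because once $0<r<\tfrac{\pi}{3}$ and $r'<0$ at some point, the sign of the nonlinear term forces $r''<0$ there and beyond, so $r$ overshoots zero; the argument from below is symmetric, and the case $k=1$ is analogous. Your approach instead linearizes: you rewrite $u=r-k\pi$ as a solution of a genuine linear second-order equation with continuous coefficient $b(x)\to 2$, pass to normal form $w''+q(x)w=0$ with $q(x)\to\tfrac74$, and invoke Sturm comparison against $\zeta''+\zeta=0$ to force infinitely many sign changes of $w$, contradicting $w>0$. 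This is more conceptual, makes the role of the characteristic roots $\tfrac12\pm\tfrac{\sqrt7}{2}\,\mathrm i$ transparent (the same roots that appear in the asymptotic expansion in Lemma~\ref{nullstellen}), and, notably, does not depend on Lemmas~\ref{dp} or~\ref{limit2}. One small inaccuracy in your closing remark: you say $r\equiv k\pi$ is a solution ``when $k$ is even'', but in fact $r\equiv k\pi$ solves the ODE for every $k\in\Z$ (both $\sin 2k\pi$ and $\sin k\pi$ vanish), so your Picard--Lindel\"of step at $x_\ast$ is valid without any parity restriction.
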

\begin{proof}
By Lemma\,\ref{dp} and Lemma\,\ref{limit2} we have $(2k-1)\tfrac{\pi}{2}<r(x)<(2k+1)\tfrac{\pi}{2}$ for all $x\geq d^{+}$.
If $r$ is a solution of the ODE, so are the functions $r+2\pi j$, $j\in\Z$.
Hence we may assume without loss of generality that $k\in\lbrace 0,1\rbrace$.

\smallskip

Let us first consider $k=0$, i.e. we have $\lim_{x\rightarrow\infty}r(x)=0$ by assumption.\\ 
We start by proving that $r$ cannot converge against $0$ \lq from above\rq, i.e. there cannot exist an $x_0>0$
such that $r(x)\geq 0$ for all $x\geq x_0$ and $\lim_{x\rightarrow\infty}r(x)=0$.\\
We prove this by contradiction. Let $x_1>0$ such that $-\tfrac{1+\tanh x}{2}+\tfrac{1}{\sqrt{2}}(1-\tanh x)^{\frac{3}{2}}<0$ for all $x>x_1$.
By asumption there exists a $x_2>x_1$ such that $0<r(x_2)<\tfrac{\pi}{3}$ and $r'(x_2)<0$. The ODE thus implies
\begin{align*}
r''(x)&=\tanh x\, r'(x)+\left(\tfrac{1}{\sqrt{2}}(1-\tanh x)^{\frac{3}{2}}-(1+\tanh x)\cos r(x)\right)\sin r(x)\\
&\leq\tanh x\, r'(x)+\left(\tfrac{1}{\sqrt{2}}(1-\tanh x)^{\frac{3}{2}}-\tfrac{(1+\tanh x)}{2}\right)\sin r(x)<0
\end{align*}
for all $x\geq x_2$ for which $0<r(x)<\tfrac{\pi}{3}$.
Consequently, there exists an $x_3>x_2$ such that $r(x_3)=0$ and $r'(x_3)<0$.
Hence there exists a point $x_4>x_3$ with $r(x_4)<0$, which contradicts our assumption.\\
Similarly, we prove that $r$ cannot converge against $0$ \lq from below\rq, i.e. there cannot exist an $x_0>0$
such that $r(x)\leq 0$ for all $x\geq x_0$ and $\lim_{x\rightarrow\infty}r(x)=0$.
More precisely, we show that if there exists a $x_5>x_1$ such that $-\tfrac{\pi}{3}<r(x_5)<0$ and $r'(x_5)>0$ then the ODE implies that there exists an $x_6>x_5$ such that $r(x_6)=0$ and $r'(x_6)>0$.

\smallskip

Since we have $\lim_{x\rightarrow\infty}r(x)=0$ by assumption, the above considerations imply that $r$ oscillates infinitely many times around $0$. The case $k=1$ is treated similarly.
\end{proof}

\subsection{Behavior of $r$ for negative $x$}
In this subsection we prove that there exist a $d^-<0$  such that for each solution $r$ of the ODE with $\lim_{x\rightarrow -\infty}r(x)=0$ we have $-2\pi<r(x)<2\pi$ for all $x<d^-$.
The proofs of those results which are proved in analogy to the corresponding results of the preceding subsection are omitted.

\smallskip

In terms of $\phi(x)=r(-x)-\tfrac{3\pi}{2}$ the ODE transforms into
\begin{align}
\label{odez}
\phi''(x)-\tanh x\cdot\phi'(x)-\tfrac{1-\tanh x}{2}\sin2\phi(x)+\tfrac{1}{\sqrt{2}}(1+\tanh x)^{\frac{3}{2}}\cos\phi(x)=0.
\end{align}

For any solution $\phi$ of the ODE (\ref{odez}) introduce the function $W^{\phi}:\R\rightarrow\R$ by
\begin{align*}
W^{\phi}(x)=\tfrac{1}{2}\phi'(x)^2-\tfrac{1-\tanh x}{2}\sin^2\phi(x)+\sqrt{2}(1+\tanh x)^{\frac{3}{2}}\sin^2(\tfrac{1}{2}\phi(x)-\tfrac{3\pi}{4}),
\end{align*}
which turns out to be a Lyapunov function.

\begin{lemma}
\label{bound22}
The function $W^{\phi}$ increases strictly on the non-negative $x$-axis.
For any solution $\phi$ of the ODE (\ref{odez}) with $\lim_{x\rightarrow\infty}\phi(x)=-\tfrac{3\pi}{2}$ 
we have $\lvert \phi'(x)\lvert\leq 3$ for $x\geq 0$.
\end{lemma}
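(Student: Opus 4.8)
The plan is to mirror the structure used in Subsection 2.4 for the function $W$ on the positive $x$-axis, transplanting those arguments to the transformed ODE \eqref{odez} and the associated Lyapunov function $W^\phi$. First I would establish monotonicity of $W^\phi$: differentiate $W^\phi$ along a solution $\phi$ of \eqref{odez}, substitute $\phi''$ from the ODE to eliminate the second-order term, and check that the resulting expression is a sum of a term proportional to $\tanh x\cdot \phi'(x)^2$ and a term proportional to $\mathrm{sech}^2 x$ times a manifestly nonnegative trigonometric polynomial. This is the exact analogue of the computation in the proof of Lemma~\ref{increase2}; the only real content is verifying that the $\mathrm{sech}^2 x$-coefficient is a nonnegative combination of $\sin^2\phi$ and $\sin^2(\tfrac12\phi-\tfrac{3\pi}{4})$ on $x\geq 0$ (where both $1-\tanh x\geq 0$ and $1+\tanh x>0$). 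Because the statement only claims strict increase (not the dichotomy of Lemma~\ref{increase2}), I do not even need to chase the equality case carefully — but in fact, since a zero of $\tfrac{d}{dx}W^\phi$ at some $x_0>0$ forces $\phi'(x_0)=0$ together with vanishing of the trig part, Picard–Lindel\"of would pin $\phi$ down to a constant solution, and one checks \eqref{odez} has no constant solution (the term $\tfrac{1}{\sqrt2}(1+\tanh x)^{3/2}\cos\phi$ is non-constant in $x$ unless $\cos\phi\equiv 0$, and then the $\sin 2\phi$ term survives), so strictness holds outright.

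Next I would derive the bound $|\phi'(x)|\leq 3$ for $x\geq 0$ under the hypothesis $\lim_{x\to\infty}\phi(x)=-\tfrac{3\pi}{2}$. The idea is the contrapositive of the reasoning in Lemma~\ref{bounded2}: write $W^\phi(x)=\tfrac12\phi'(x)^2 - \tfrac{1-\tanh x}{2}\sin^2\phi(x)+\sqrt2(1+\tanh x)^{3/2}\sin^2(\tfrac12\phi-\tfrac{3\pi}{4})$, and bound the potential part uniformly on $\R$. The negative term is at least $-\tfrac12$ in absolute value near $x\to-\infty$ but we only care about $x\ge 0$, where $1-\tanh x\in(0,1]$ and $1+\tanh x\in[1,2)$, so the potential part lies in $[-\tfrac12,\,2\sqrt2]$ or a comparable explicit interval; I would compute the precise supremum $C$ of $|{-}\tfrac{1-\tanh x}{2}\sin^2\phi + \sqrt2(1+\tanh x)^{3/2}\sin^2(\cdots)|$ over all $(x,\phi)$ with $x\geq 0$. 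If $|\phi'(x_0)|$ exceeded $3$ at some $x_0\geq 0$, then $W^\phi(x_0)>\tfrac92 - C$, and since $W^\phi$ is strictly increasing, $W^\phi(x)\geq W^\phi(x_0)$ for all $x\geq x_0$; this yields $\phi'(x)^2\geq 2W^\phi(x_0)-2C>0$ with a fixed sign for $\phi'$, forcing $\phi(x)\to\pm\infty$ and contradicting $\lim_{x\to\infty}\phi(x)=-\tfrac{3\pi}{2}$. The number $3$ should come out exactly right when $C = \tfrac92 - 2\cdot\tfrac12\cdot(\text{something})$; concretely $\tfrac12\cdot 3^2 = \tfrac92$ and $C$ must satisfy $\tfrac92 - C \geq$ the supremum of the potential along any solution, so I expect $C$ to work out to exactly the value making the threshold $9/2$.

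The main obstacle is the bookkeeping in the second part: getting the constant $3$ rather than some larger explicit bound requires pinning down $C = \sup_{x\geq 0,\,\phi}\bigl(-\tfrac{1-\tanh x}{2}\sin^2\phi + \sqrt2(1+\tanh x)^{3/2}\sin^2(\tfrac12\phi - \tfrac{3\pi}{4})\bigr)$ exactly, and then also a matching lower bound $\inf(\cdots)$ so that the difference $\sup - \inf$ over the relevant range is at most $9/2$. Since $\sqrt2\cdot 2^{3/2} = 4$ and $\sqrt2\cdot 1^{3/2}=\sqrt2$, the positive term ranges over $[0,4)$ as $x\to\infty$ but is multiplied by $\sin^2(\cdots)\in[0,1]$; combined with the $[-\tfrac12,0]$ contribution of the negative term, the potential lies in roughly $[-\tfrac12, 4)$, giving $\sup-\inf$ close to $\tfrac92$. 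The delicate point is whether the supremum $4$ is actually attained in the limit along a solution with the prescribed asymptotics — as $x\to\infty$ we have $\phi\to-\tfrac{3\pi}{2}$, so $\tfrac12\phi - \tfrac{3\pi}{4}\to -\tfrac{3\pi}{2}$, hence $\sin^2(\tfrac12\phi-\tfrac{3\pi}{4})\to 1$ and the positive term does approach its supremum $4$, while $\sin^2\phi\to 1$ makes the negative term approach $0$; so $W^\phi(x)\to 4$ along such a solution, which is consistent with $\phi'(x)\to 0$ and $\tfrac12\cdot 0 + 4 = 4 < \tfrac92$, confirming that $|\phi'|$ never needs to reach $3$. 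I would write this out carefully, using $\tanh x\geq 0$ for $x\geq 0$ to control signs, and invoking Lemma-style monotonicity exactly as above.
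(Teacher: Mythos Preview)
Your proposal is correct and follows precisely the route the paper intends: the paper omits the proof of this lemma, stating that it is ``proved in analogy to the corresponding results of the preceding subsection,'' i.e., by transplanting the arguments of Lemma~\ref{increase2} and Lemma~\ref{bounded2} to the transformed ODE~\eqref{odez}. Your differentiation of $W^\phi$ along a solution yields
\[
\tfrac{d}{dx}W^\phi(x)=\tanh x\cdot\phi'(x)^2+\mathrm{sech}^2x\Bigl(\tfrac12\sin^2\phi(x)+\tfrac{3}{\sqrt2}(1+\tanh x)^{1/2}\sin^2\bigl(\tfrac12\phi(x)-\tfrac{3\pi}{4}\bigr)\Bigr),
\]
which is nonnegative for $x\geq 0$, exactly mirroring Lemma~\ref{increase2}; and your contrapositive argument for the derivative bound is the analogue of Lemma~\ref{bounded2}, with the constant $3$ arising from $\tfrac12\cdot 3^2=\tfrac92=4-(-\tfrac12)$, where $4=\sup_{x\geq 0}\sqrt2(1+\tanh x)^{3/2}$ and $-\tfrac12=\inf_{x\geq 0}\bigl(-\tfrac{1-\tanh x}{2}\bigr)$.

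One small correction: your Picard--Lindel\"of detour for strictness is both unnecessary and contains a slip (the ODE~\eqref{odez} \emph{does} admit the constant solutions $\phi\equiv(2k+1)\tfrac{\pi}{2}$). Strictness follows more directly: at any $x_0\geq 0$, vanishing of $\tfrac{d}{dx}W^\phi(x_0)$ would force both $\sin\phi(x_0)=0$ and $\sin\bigl(\tfrac12\phi(x_0)-\tfrac{3\pi}{4}\bigr)=0$, and these two conditions are incompatible. Hence $\tfrac{d}{dx}W^\phi>0$ on $[0,\infty)$ for every solution, with no exceptional case.
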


\begin{lemma}
\label{bound2}
Let $\phi$ be a solution of the ODE (\ref{odez}).
If there exists a point $x_0>0$ with $\lvert\phi'(x_0)\lvert>-g(-x_0)$ then $\lim_{x\rightarrow\infty}\pm\phi(x)=\infty$.
\end{lemma}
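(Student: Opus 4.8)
The plan is to mirror the proof of Lemma~\ref{bound}, which is the analogous statement for the untransformed equation. First I would reduce to a single sign. The map $\phi\mapsto-\phi-3\pi$ sends solutions of (\ref{odez}) to solutions of (\ref{odez}) — under the substitution $\phi(x)=r(-x)-\tfrac{3\pi}{2}$ this is exactly the symmetry $r\mapsto-r$ of the ODE — and it negates $\phi'$. Since for $x_0>0$ one has $\coth(-x_0)<0$ and hence $-g(-x_0)>0$, the hypothesis $\lvert\phi'(x_0)\rvert>-g(-x_0)$ is a genuine positive lower bound (the analogue of $g(x_0)>0$ in Lemma~\ref{bound}), and it means that either $\phi'(x_0)>-g(-x_0)$ or $\phi'(x_0)<g(-x_0)$. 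In the second case $\psi:=-\phi-3\pi$ is a solution of (\ref{odez}) with $\psi'(x_0)=-\phi'(x_0)>-g(-x_0)$, so it suffices to treat the case $\phi'(x_0)>-g(-x_0)$ and conclude $\lim_{x\to\infty}\phi(x)=+\infty$ (the other case then gives $\lim_{x\to\infty}\psi(x)=+\infty$, i.e. $\lim_{x\to\infty}\phi(x)=-\infty$).

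Next I would set up the convexity criterion. Solving (\ref{odez}) for $\phi'$ and using $\coth x>0$ for $x>0$, one sees that for $x>0$ the inequality $\phi''(x)>0$ is equivalent to
\[
\phi'(x)>\coth x\left(-\tfrac{1-\tanh x}{2}\sin2\phi(x)+\tfrac{1}{\sqrt2}(1+\tanh x)^{\frac32}\cos\phi(x)\right).
\]
The right-hand side is at most $\coth x\bigl(\tfrac{1-\tanh x}{2}+\tfrac{1}{\sqrt2}(1+\tanh x)^{\frac32}\bigr)$, and a one-line computation from the definition of $g$ (using $\coth(-x)=-\coth x$ and $\tanh(-x)=-\tanh x$) identifies this last expression with $-g(-x)$. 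Hence $\phi'(x)>-g(-x)$ forces $\phi''(x)>0$. I would also record that $x\mapsto-g(-x)$ is strictly decreasing on the positive $x$-axis — equivalently $g'<0$ on the negative axis, which follows from the same elementary monotonicity analysis of $g$ that was used on the positive axis; concretely $-g(-x)$ decreases from $+\infty$ as $x\to0^+$ to $2$ as $x\to\infty$, and its derivative is readily checked to have constant sign.

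With these two observations in hand the proof proceeds essentially verbatim as in Lemma~\ref{bound}. From $\phi'(x_0)>-g(-x_0)$ we get $\phi''(x_0)>0$. If $\phi''$ vanished somewhere on $(x_0,\infty)$, let $x_1>x_0$ be the first such point, so $\phi''>0$ on $[x_0,x_1)$; then $\phi'$ is strictly increasing on $[x_0,x_1]$, and using that $-g(-\cdot)$ is decreasing we obtain $\phi'(x_1)\ge\phi'(x_0)>-g(-x_0)\ge-g(-x_1)$, whence $\phi''(x_1)>0$, a contradiction. Therefore $\phi''>0$ on all of $[x_0,\infty)$, so $\phi'(x)\ge\phi'(x_0)>0$ there, and thus $\lim_{x\to\infty}\phi(x)=+\infty$, which by the reduction above proves the lemma.

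The argument is routine once the reductions are in place; the only point requiring genuine care — and hence the main obstacle — is the bookkeeping of signs under the two changes of variable $x=\log\tan t$ and $\phi(x)=r(-x)-\tfrac{3\pi}{2}$. In particular one must check that the function replacing $g(x_0)$ from Lemma~\ref{bound} is precisely $-g(-x_0)$, that this function is monotone on the positive axis, and that the relevant symmetry of (\ref{odez}) is $\phi\mapsto-\phi-3\pi$ rather than $\phi\mapsto-\phi$ (the latter is \emph{not} a symmetry, since it flips the sign of the $\cos\phi$-term).
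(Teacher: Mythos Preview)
Your proof is correct and follows precisely the route the paper intends: the paper omits the proof of Lemma~\ref{bound2} entirely, noting only that it is ``proved in analogy to the corresponding results of the preceding subsection'', namely Lemma~\ref{bound}. Your careful tracking of the sign changes under $x\mapsto -x$ and $\phi(x)=r(-x)-\tfrac{3\pi}{2}$ --- in particular that the relevant bound becomes $-g(-x)$, that this function is strictly decreasing on the positive axis, and that the correct symmetry of (\ref{odez}) is $\phi\mapsto -\phi-3\pi$ --- fills in exactly the bookkeeping the paper suppresses.
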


Let $d_{-}>0$ be the unique positive solution of the equation $i(x)+g(-x)=0$.
Note that we have $i(x)\geq -g(-x)$ for all $x\geq d_{-}$. Set $d^{-}:=-d_{-}$.

\begin{lemma}
\label{dm}
For each solution $r$ of the ODE with $\lim_{x\rightarrow -\infty}r(x)=0$ we have $-2\pi<r(x)<2\pi$ for all $x<d^-$.
\end{lemma}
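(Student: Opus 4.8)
The plan is to transfer the analysis of Subsection ``Behavior of $r$ for positive $x$'' to the variable $\phi(x)=r(-x)-\tfrac{3\pi}{2}$ and the ODE \eqref{odez}. The hypothesis $\lim_{x\to-\infty}r(x)=0$ becomes $\lim_{x\to\infty}\phi(x)=-\tfrac{3\pi}{2}$, so Lemma \ref{bound22} applies and gives $\lvert\phi'(x)\rvert\le 3$ for all $x\ge 0$; in particular, by Lemma \ref{bound2} (contrapositive), we have $\lvert\phi'(x)\rvert\le -g(-x)$ whenever $-g(-x)\ge 3$, but more to the point we never hit the ``escape to $\pm\infty$'' alternative of Lemma \ref{bound2}. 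The claim $-2\pi<r(x)<2\pi$ for $x<d^-=-d_-$ is equivalent, after substituting $x\mapsto -x$, to $-\tfrac{7\pi}{2}<\phi(x)<\tfrac{\pi}{2}$ for all $x>d_-$; since $\phi\to-\tfrac{3\pi}{2}$, this is a statement that the graph of $\phi$ stays within the strip of height $2\pi$ centered (loosely) at its limit, once $x\ge d_-$. This is the exact analogue of Lemma \ref{dp} together with Lemma \ref{limit2} in the positive-$x$ setting, and I would prove it by the same mechanism.

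First I would record the elementary monotonicity facts about the coefficient functions: $i$ increases strictly and $x\mapsto g(-x)$ increases strictly on the positive axis (so $-g(-x)$ decreases), hence $d_-$ is well defined and $i(x)\ge -g(-x)$ for $x\ge d_-$, exactly as stated just before the lemma. Then I would establish the analogue of Lemma \ref{dp}: if there is a point $x_0\ge d_-$ at which $\phi(x_0)$ equals an odd multiple of $\tfrac{\pi}{2}$ with $\phi'$ of the appropriate sign pushing $\phi$ out of the strip, then $\phi$ must in fact run off to $\pm\infty$, contradicting $\phi\to-\tfrac{3\pi}{2}$. The argument is the Lyapunov-function argument of Lemma \ref{dp}: using that $W^\phi$ increases strictly (Lemma \ref{bound22}), at the next time $\phi$ reaches the corresponding integer multiple of $\pi$ with nonzero derivative one gets $\phi'(\cdot)^2\ge i(\cdot)^2$ (this is where the function $i$ enters, playing the role $f$ played before), hence $\lvert\phi'\rvert\ge i(x)\ge -g(-x)$, and then Lemma \ref{bound2} forces $\phi\to\pm\infty$.

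Next I would run the analogue of Lemma \ref{limit2} to rule out the remaining boundary behavior: near the relevant odd-multiple-of-$\tfrac{\pi}{2}$ levels bounding the strip, either $\phi$ crosses (handled by the previous paragraph, giving escape to infinity, contradiction), or $\phi$ stays strictly between two consecutive such levels and converges monotonically to the enclosed integer multiple of $\pi$ — but here the sign of $h$-type quantity at $x_0\ge d_-$ forces $W^\phi(x_0)\ne 0$, and strict monotonicity of $W^\phi$ then excludes convergence to that multiple of $\pi$, just as in the third case of Lemma \ref{limit2}. Putting these together: a solution with $\lim_{x\to\infty}\phi(x)=-\tfrac{3\pi}{2}$ cannot, for $x\ge d_-$, have its graph leave the open strip $-\tfrac{7\pi}{2}<\phi<\tfrac{\pi}{2}$, which is the desired conclusion after undoing the substitution. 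The main obstacle, and the only place real work is hidden, is the sign-bookkeeping in adapting Lemma \ref{dp}/Lemma \ref{limit2}: one must check that with the shift by $\tfrac{3\pi}{2}$ and the reflection $x\mapsto -x$, the inequalities $\phi'(x_1)^2\ge i(x_0)^2$ and $i(x)\ge -g(-x)$ come out with the correct direction at the correct critical levels — but since the excerpt explicitly says ``the proofs of those results which are proved in analogy to the corresponding results of the preceding subsection are omitted,'' this verification is exactly the routine transcription the author intends to suppress.
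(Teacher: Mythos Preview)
Your overall plan is sound and would work, but the paper's proof is markedly simpler and avoids the case split you propose. The key observation you are missing is that here the limit $\lim_{x\to\infty}\phi(x)=-\tfrac{3\pi}{2}$ is a \emph{hypothesis}, not a conclusion to be determined; this lets the paper bypass the analogue of Lemma~\ref{limit2} entirely. Concretely: supposing $\phi(x_0)=\tfrac{\pi}{2}$ for some $x_0\ge d_-$, the paper makes no assumption on the sign of $\phi'(x_0)$. Since $\phi\to-\tfrac{3\pi}{2}$, the intermediate value theorem gives a point $x_1>x_0$ with $\phi(x_1)=-\tfrac{\pi}{2}$. Comparing $W^\phi(x_1)\ge W^\phi(x_0)$ at these two levels (where $\sin^2\phi=1$ at both, but the $\sin^2(\tfrac{1}{2}\phi-\tfrac{3\pi}{4})$ term drops from $1$ to $0$) gives $\phi'(x_1)^2\ge i(x_0)^2$, hence $\lvert\phi'(x_1)\rvert\ge i(x_0)\ge -g(-x_0)\ge -g(-x_1)$, and Lemma~\ref{bound2} forces $\phi\to\pm\infty$, a contradiction. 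The case $\phi(x_0)=-\tfrac{7\pi}{2}$ is analogous.

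So your two-stage decomposition (Lemma~\ref{dp}-analogue followed by Lemma~\ref{limit2}-analogue) is unnecessary. In fact, the Lemma~\ref{limit2}-analogue you sketch has a small wrinkle: the argument ``$W^\phi(x_0)\ne 0$ excludes convergence to the enclosed integer multiple of $\pi$'' does not transcribe cleanly, because unlike $W$, the function $W^\phi$ does not tend to $0$ when $\phi\to 0$ (the $\sin^2(\tfrac{1}{2}\phi-\tfrac{3\pi}{4})$ term contributes $\sin^2(-\tfrac{3\pi}{4})=\tfrac{1}{2}$ at $\phi=0$). You could still close this case directly---if $\phi$ were trapped in $(0,\tfrac{\pi}{2})$ for all large $x$, its limit could not be $-\tfrac{3\pi}{2}$---but that only underlines that the paper's route, which uses the known limit at the outset, is the cleaner one.
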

\begin{proof}
Let $\phi$ solve the ODE (\ref{odez}) with $\lim_{x\rightarrow\infty}\phi(x)=-\frac{3\pi}{2}$. We prove that
there cannot exists a point $x_0\geq d_{-}$ with $\phi(x_0)=\frac{\pi}{2}$ or $\phi(x_0)=-\frac{7\pi}{2}$.
This statement is obviously equivalent to the claim.

\smallskip

Suppose $\phi(x_0)=\tfrac{\pi}{2}$ for an $x_0\geq d_{-}$. Since $\lim_{x\rightarrow\infty}\phi(x)=-\tfrac{3\pi}{2}$, continuity of $\phi$ implies that there exists a point $x_1>x_0$ with
$\phi(x_1)=-\tfrac{\pi}{2}$. Thus $W^{\phi}(x_1)\geq W^{\phi}(x_0)$,
which is equivalent to
\begin{align*}
\tfrac{1}{2}\phi'(x_1)^2-\tfrac{1}{2}(1-\tanh x_1)\geq 
\tfrac{1}{2}\phi'(x_0)^2-\tfrac{1}{2}(1-\tanh x_0)+\sqrt{2}(1+\tanh x_0)^{\frac{3}{2}}.
\end{align*}
This in turn implies $\phi'(x_1)^2\geq i(x_0)^2$.
Consequently, we either have $\phi'(x_1)\geq i(x_0)\geq -g(-x_0)\geq -g(-x_1)$
or $\phi'(x_1)\leq -i(x_0)\leq g(-x_0)\leq g(-x_1)$.
Lemma\,\ref{bound2} thus yields $\lim_{x\rightarrow\infty}\phi(x)=\pm\infty$, which contradicts our assumption.

\smallskip

The case $\phi(x_0)=-\tfrac{7\pi}{2}$ for an $x_0\geq d_{-}$ is treated analogously.
\end{proof}

\subsection{Restrictions on the Brouwer degree}
\label{sec5}
In this subsection we prove Theorem\,A:
by combining the results of the previous subsections we give restrictions for the possible integers $\ell$ in $r(\tfrac{\pi}{2})=(2\ell+1)\tfrac{\pi}{2}$.
Theorem\,3.4 in \cite{puttmann} implies that the Brouwer degree of $\psi_r$ is given by
$\mbox{deg}\,\psi_{r}=2\ell+1$. 
Consequently, giving a restriction for the possible $\ell$ is equivalent to giving a restriction for the possible Brouwer degrees.


\begin{theorem}
\label{brodeg}
Each solution $r$ of the BVP has Brouwer degree $\pm 1, \pm 3$, $\pm 5$ or $\pm 7$.
\end{theorem}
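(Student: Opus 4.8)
The plan is to translate the statement ``$\deg\psi_r\in\{\pm1,\pm3,\pm5,\pm7\}$'' into the equivalent statement that $\ell\in\{-4,-3,\dots,3\}$, i.e. $|2\ell+1|\le 7$, and to obtain this bound by controlling the total ``height'' that a solution $r$ of the BVP can traverse. The solution runs from $\lim_{x\to-\infty}r(x)=0$ to $\lim_{x\to+\infty}r(x)=(2\ell+1)\tfrac{\pi}{2}$. I would split the $x$-axis at the two thresholds $d^-<0<d^+$ produced in the previous two subsections and bound the oscillation of $r$ on each of the three regions $(-\infty,d^-)$, $[d^-,d^+]$, and $(d^+,\infty)$ separately, then add up.

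First I would handle the outer regions, where the earlier lemmas already pin $r$ into a narrow strip. On $(-\infty,d^-)$, Lemma~\ref{dm} gives $-2\pi<r(x)<2\pi$, and since $\lim_{x\to-\infty}r(x)=0$, the value $r(d^-)$ lies in the open interval $(-2\pi,2\pi)$; that is, $r(d^-)$ is within one full period of $0$. On $(d^+,\infty)$ I would argue that the limit $(2\ell+1)\tfrac{\pi}{2}$ must lie within $\pi$ of $r(d^+)$: by Lemma~\ref{dp} the graph cannot cross the lines $(4k+1)\tfrac{\pi}{2}$ upward-or-horizontally nor $(4k+3)\tfrac{\pi}{2}$ downward-or-horizontally without escaping to $\pm\infty$, so once past $d^+$ the solution is trapped in a strip of height $3\pi$ bounded by two such lines; combined with Lemma~\ref{limit2} (which forces the limit, when finite and equal to an odd multiple of $\tfrac{\pi}{2}$, to be $(4k+1)\tfrac{\pi}{2}$ or $(4k+3)\tfrac{\pi}{2}$ at the relevant edge of the strip) and with Lemma~\ref{streifen} (which rules out $\lim r = k\pi$, since a BVP solution converges and does not oscillate infinitely — or rather, Lemma~\ref{streifen} shows convergence to $k\pi$ would force infinitely many oscillations, incompatible with the boundary value being approached monotonically near the strip edge), one concludes that $(2\ell+1)\tfrac{\pi}{2}$ differs from $r(d^+)$ by strictly less than $\pi$.

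The middle region $[d^-,d^+]$ is where I expect the main work and the main obstacle. Here I need an a priori bound on $|r(d^+)-r(d^-)|$. The tool is the Lyapunov function $W$: by Lemma~\ref{bounded2}, if $W(x_0)>1$ at any $x_0\ge 0$ then $r\to\pm\infty$, so for a BVP solution $W(x)\le 1$ for all $x\ge 0$, whence $|r'(x)|\le(2(1+\sqrt2))^{1/2}$ on $[0,d^+]$. One needs a companion bound on $[d^-,0]$ from the analysis of the negative axis (the function $W^\phi$ and Lemma~\ref{bound22}, giving $|r'|\le 3$ there). Integrating $|r'|$ over the compact interval $[d^-,d^+]$ gives $|r(d^+)-r(d^-)|\le 3\,|d^-| + (2(1+\sqrt2))^{1/2}\,d^+$, and I would then check numerically/explicitly — $d^+$ and $d^-$ being the concrete roots of $f=g$ and $i(x)+g(-x)=0$ — that this quantity is small enough. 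Adding the three contributions: $|r(d^+)| < 2\pi + 3|d^-|$ from $r(d^-)\in(-2\pi,2\pi)$ and the middle estimate, and then $|(2\ell+1)\tfrac{\pi}{2}| < |r(d^+)| + \pi$; so altogether $|2\ell+1|\tfrac{\pi}{2} < 3\pi + 3|d^-| + (2(1+\sqrt2))^{1/2}d^+$, which I would verify is $< 8\cdot\tfrac{\pi}{2}=4\pi$, i.e. forces $|2\ell+1|\le 7$.

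The delicate point, and the one to be most careful about, is the bookkeeping at the edges of the strips: Lemmas~\ref{dp}, \ref{limit2}, \ref{streifen} together must be assembled into the clean statement ``a BVP solution, once past $d^+$, stays in a strip of height $3\pi$ and converges to one of its two odd-multiple-of-$\pi/2$ edges, hence $|(2\ell+1)\tfrac{\pi}{2}-r(d^+)|<\pi$'' — and symmetrically that $r(d^-)\in(-2\pi,2\pi)$ — without an off-by-$\pi$ error, since the whole theorem lives or dies by a factor that is tight to within a couple of multiples of $\tfrac{\pi}{2}$. I would therefore write this region-by-region accounting out in full, keeping the inequalities strict, and only at the very end insert the numerical values of $d^+$, $d^-$ to close the estimate.
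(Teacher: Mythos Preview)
Your overall architecture matches the paper's: split the $x$-axis at $d^-$ and $d^+$, use Lemma~\ref{dm} on $(-\infty,d^-)$ to get $r(d^-)\in(-2\pi,2\pi)$, use Lemmas~\ref{dp} and~\ref{limit2} on $(d^+,\infty)$ to confine the limit to within $\pi$ of $r(d^+)$, and bound $|r(d^+)-r(d^-)|$ by integrating a bound on $|r'|$ over $[d^-,d^+]$. (Lemma~\ref{streifen} is not needed here: by hypothesis $r$ is a BVP solution, so its limit is already an odd multiple of $\tfrac{\pi}{2}$.)

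The gap is in the middle estimate. You propose to use only the constant bounds $|r'|\le 3$ on $[d^-,0]$ and $|r'|\le(2(1+\sqrt2))^{1/2}$ on $[0,d^+]$, giving $|r(d^+)-r(d^-)|\le 3|d^-|+(2(1+\sqrt2))^{1/2}d^+$. Numerically $d^+\approx 1.00$ and $|d^-|\approx 0.87$, so this is about $4.82$; to conclude $|2\ell+1|\le 7$ you need this quantity to be strictly below $\tfrac{3\pi}{2}\approx 4.71$. The crude bound misses by a hair, and the inequality you wrote down, $3|d^-|+(2(1+\sqrt2))^{1/2}d^+<\pi$, is false.

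The paper closes exactly this gap by invoking, in addition to the constant bounds from Lemmas~\ref{bounded2} and~\ref{bound22}, the sharper pointwise bounds $|r'(x)|\le g(x)$ for $x>0$ (Lemma~\ref{bound}) and $|r'(x)|\le -g(x)$ for $x<0$ (Lemma~\ref{bound2}). Since $g$ is large near $0$ and decreases toward $1$ on the positive axis, one takes the minimum of the two bounds: split $[0,d^+]$ at the point $x_1$ where $g(x_1)=(2(1+\sqrt2))^{1/2}$ and $[d^-,0]$ at the $x_0$ where $-g(x_0)=3$, use the constant bound near $0$ and integrate $g$ (respectively $-g$) on the outer pieces. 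This refinement shaves just enough off the middle estimate to get $r(d^+)-r(d^-)<\tfrac{3\pi}{2}$ and hence $|r(d^+)|<\tfrac{7\pi}{2}$, from which the conclusion follows.
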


\begin{proof}
The strategy for the proof is as follows.
\begin{enumerate}
\item By Lemma\,\ref{dm} there exists a constant $d^{-}<0$ such that all solutions $r$ of the BVP satisfy $-2\pi\leq r(x)\leq 2\pi$ for all $x\leq d^{-}$.
\item We prove that each solution $r$ of the BVP satisfies $-7\tfrac{\pi}{2}<r(d^+)<7\tfrac{\pi}{2}$.  In order to do so we use (1) and the fact that for each solution $r$ of the BVP the first derivative of $r$ is bounded by a constant 
\item Lemmas\,\ref{dp} and \ref{limit2} imply that if there exists $k\in\Z$ such that $(2k-1)\tfrac{\pi}{2}\leq r(d^+)\leq (2k+1)\tfrac{\pi}{2}$, then
either $\lim_{x\rightarrow\infty}r(x)=(2k\pm 1)\tfrac{\pi}{2}$ or $\lim_{x\rightarrow\infty}r(x)=\pm\infty$.
Since by assumption $r$ is a solution of the BVP the latter case does not occur. By (2) and Theorem\,3.4 in \cite{puttmann} we thus have
$\mbox{deg}(\psi_r)\in\lbrace\pm 1, \pm 3, \pm 5, \pm 7\rbrace$.

\end{enumerate}

\smallskip

It remains to prove (2).
Let us first consider the region $d^-\leq x\leq 0$. By Lemma\,\ref{bound22} and Lemma\,\ref{bound2} we have $\lvert r'(x)\lvert \leq 3$ and $r'(x)\leq -g(x)$ for all $d^-\leq x\leq 0$.
Let $x_0<0$ be such that $-g(x_0)=3$. Then we have 
\begin{align*}
r(0)\leq\int_{d^-}^{x_0}-g(x)dx-3x_0+r(d^-).
\end{align*}
Let us next consider the region $0\leq x\leq d^+$.
By Lemma\,\ref{bounded2} and Lemma\,\ref{bound} we have $\lvert r'(x)\lvert \leq (2(1+\sqrt{2}))^{\frac{1}{2}}$ and $r'(x)\leq g(x)$ for all $0\leq x\leq d^+$.
Let $x_1>0$ be such that $g(x_1)=(2(1+\sqrt{2}))^{\frac{1}{2}}$. Then we have 
\begin{align*}
r(d^+)\leq (2(1+\sqrt{2}))^{\frac{1}{2}}x_1+\int_{x_1}^{d^+}g(x)dx+r(0).
\end{align*}
By the above estimate for $r(0)$ in this inequality and using the computer program Mathematica to evaluate the integrals, we thus obtain
$r(d+)<\tfrac{3\pi}{2}+r(d^-)<\tfrac{7\pi}{2}$. Analogously, we prove $r(d+)>-\tfrac{3\pi}{2}+r(d^-)>-\tfrac{7\pi}{2}$, which establishes (2) and thus the claim.
\end{proof}

The preceding result does not seem to be optimal: numerical results indicate that all solutions of the BVP have Brouwer degree $\pm 1$ or $\pm 3$.
So the following question remains.

\begin{question*}
Do all solutions of the BVP have Brouwer degree $\pm 1$ or $\pm 3$?
\end{question*}


\section{Construction of infinitely many harmonic self-maps of $\SU(3)$}
\label{sec3}
First of all, note that in terms of the variable $x$ Theorem\,\ref{ivp} states that for every $v\geq 0$ there is a unique solution $r_v:\R\rightarrow\R$ of the ODE
that satisfies $r_v(x)\simeq v\exp(x)$ for $x\rightarrow -\infty$.
The functions $r_v$ and $r_v$ depend continuously on $v$.

\smallskip

We introduce the \textit{nodal number $\frak{N}(r_v)$ of $r_v$} as the number 
of intersection points of $r_v$ with $\pi$. The function $r_1(x)=\arctan\exp x$, i.e. $r(t)=t$, solves the BVP with $\frak{N}(r_1)=0$.
The next lemma ensures that we cannot increase $v$ arbitrarily without increasing the nodal number of $r_v$.
Its proof is based on Gastel's ideas, see Lemmas\,3.3 and 4.2 in \cite{ga}.

\begin{lemma}
\label{nullstellen}
For each $k\in\N$ there exists $c(k)>0$ such that $\frak{N}(r_v)\geq k$ for $v>c(k)$.
\end{lemma}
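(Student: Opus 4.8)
The plan is to show that if the initial velocity $v$ is large then $r_v$ must cross the level $\pi$ at least $k$ times before $x$ reaches some fixed finite value, and then to argue — as in the cited lemmas of Gastel — that the number of such crossings on the bounded region $[d^-,d^+]$ grows without bound as $v\to\infty$. Concretely, I would fix a closed interval $I=[a,b]\subset(-\infty,\infty)$ (say inside the region where $x\geq 0$, so that the Lyapunov function $W$ from Lemma~\ref{increase2} is available and increasing) and count oscillations of $r_v$ about $\pi$ there. The key point is a comparison argument: on intervals where $r_v$ stays in a fixed compact band around $\pi$, the ODE $r''=\tanh x\, r' -\tfrac{1+\tanh x}{2}\sin 2r +\tfrac{1}{\sqrt 2}(1-\tanh x)^{3/2}\sin r$ behaves, after the substitution removing the first-order term, like an oscillator $u''+c(x)u=0$ with a strictly positive coefficient bounded below on $I$; by Sturm comparison, a solution that enters $I$ with large enough $\lvert r_v'\rvert$ is forced to oscillate about $\pi$ as many times as we wish before leaving the band or leaving $I$.

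The first step is therefore to show that $\lvert r_v'\rvert$ can be made arbitrarily large on a fixed compact set by choosing $v$ large. This follows from continuous dependence on $v$ (Theorem~\ref{ivp}): since $r_v(x)\simeq v\exp(x)$ as $x\to-\infty$, for each fixed $x_*$ the value $r_v(x_*)$ and derivative $r_v'(x_*)$ depend continuously on $v$, and one checks from the integral form of the equation that $r_v'(x_*)\to\infty$ as $v\to\infty$ (the linearization near $r=0$ dominates while $r_v$ is still small, giving $r_v'\sim v e^{x}$ there, and the equation cannot decelerate $r_v'$ faster than a fixed exponential rate on a compact set). The second step is the dichotomy: for such large $v$, either $r_v$ has already crossed $\pi$ at least $k$ times for $x\leq x_*$, and we are done, or $r_v$ enters a neighborhood of some multiple of $\pi$ with very large speed, and then the Sturm-type oscillation argument of the previous paragraph produces the required crossings of $\pi$ on a bounded $x$-interval — here one uses that $\lvert r_v'\rvert$ large together with Lemma~\ref{bounded2} / Lemma~\ref{bound} pins down where $r_v$ can go, preventing it from escaping monotonically to $\pm\infty$ before oscillating.

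The main obstacle will be making the oscillation count \emph{lower bound} uniform and explicit enough to produce a genuine threshold $c(k)$: one must ensure that the band around $\pi$ in which the comparison coefficient $c(x)$ stays positive is not exited prematurely, and that the monotone-escape phenomena of Lemmas~\ref{bound} and \ref{dp} do not kick in and carry $r_v$ off to infinity before $k$ crossings of $\pi$ occur. I would handle this by choosing the comparison interval $I$ close to $-\infty$ where the coefficients of the ODE are essentially constant ($\tanh x\approx -1$), so that the equation is nearly autonomous and the phase-plane picture is a perturbed pendulum whose separatrix structure is well understood; a trajectory launched with large speed near $r=\pi$ winds around the center at $r=\pi$ a number of times proportional to $\log v$ before its energy (measured by $W$) carries it past a saddle. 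Counting these windings and translating each half-winding into a crossing of the level $\pi$ gives $\mathfrak{N}(r_v)\geq k$ once $\log v$ exceeds a constant depending on $k$, which is the desired $c(k)$.
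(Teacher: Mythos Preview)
Your first step contains a claim that is actually false and on which the rest of the argument hinges. You assert that for fixed $x_*$ one has $r_v'(x_*)\to\infty$ as $v\to\infty$, arguing from the linearization $r_v'\sim v e^{x}$. But that asymptotic is valid only while $r_v$ is still small, i.e.\ for $x\ll -\log v$; once $x$ reaches the order of $-\log v$ the nonlinearity takes over, and for any \emph{fixed} $x_*$ the derivative $r_v'(x_*)$ stays bounded. In fact the paper already proves this uniformly: Lemma~\ref{bound22} (translated back via $\phi(x)=r(-x)-\tfrac{3\pi}{2}$) gives $\lvert r_v'(x)\rvert\le 3$ for all $x\le 0$ and all $v$. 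So the scenario ``trajectory launched with large speed near $r=\pi$'' simply does not occur, and the Sturm/phase-plane count you build on it has nothing to work with. The later heuristic that the number of windings is ``proportional to $\log v$'' via an energy carried past a saddle is likewise off: large energy in a pendulum produces rotation over the top, not winding about the center, and the Lyapunov function $W$ you invoke is only monotone for $x\ge 0$, whereas the relevant dynamics takes place far to the left.

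What the paper does is a rescaling, not a large-velocity argument. Writing $\varphi_v=r_v-\pi$ and shifting $x\mapsto x-\log v$, the coefficients satisfy $\tanh(x-\log v)\to -1$, so the ODE limits to the autonomous damped pendulum $\psi''+\psi'+2\sin\psi=0$ with $\psi(x)\simeq -\pi+e^{x}$ as $x\to-\infty$. One shows (via the decreasing energy $U=\psi'^2+8\sin^2(\psi/2)$) that $\psi(x)\to 0$, and linearizing at $0$ gives the oscillatory tail $\psi(x)\simeq c_1 e^{-x/2}\sin(\tfrac{\sqrt7}{2}x-c_2)$, so $\psi$ has infinitely many zeros. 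Then, as in Gastel's Lemma~3.3, $\varphi_v(\,\cdot\,-\log v)\to\psi$ in $C^1$, which transfers any prescribed number $k$ of simple zeros of $\psi$ to $\varphi_v$ once $v$ is large enough. In short, the oscillations live in a \emph{moving} window near $x\approx -\log v$, not on a fixed compact set; your proposal misses this translation and therefore cannot produce the crossings.
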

\begin{proof}
We denote by $\psi:\R\rightarrow\R$ the solution of the differential equation
\begin{align*}
\tfrac{d^2}{dx^2}\psi(x)+\tfrac{d}{dx}\psi(x)+2\sin\psi(x)=0,
\end{align*}
satisfying $\psi(x)\simeq -\pi+\exp(vx)$ as $x\rightarrow -\infty$.
There exists a unique solution with this properties, which can be proved as in \cite{ga}.
We define
$U(\psi,x):=\psi'(x)^2 +8\sin^2\tfrac{\psi(x)}{2}$, where we make use of the abbreviation $\psi'(x):=\tfrac{d}{dx}\psi(x)$.
By using the above differential equation we thus obtain
\begin{align}
\label{mon}
\tfrac{d}{dx}U(\psi, x) = -2\psi'(x)^2.
\end{align}
Consequently, $U(\psi,\,\cdot\,)$ is monotonically decreasing. Since it is also bounded from below by $0$, its limit for $x\rightarrow\infty$ exists, which can be only $0$
 by the above ordinary differential equation and (\ref{mon}), i.e., we have $U(\psi, \infty)=0$.
 
\smallskip
  
A solution $\psi$ of the above differential equation converges to $0$ as $x\rightarrow\infty$, and so does $\psi'$, because of (\ref{mon}). 
From this and the fact that $\psi$ asymptotically solves 
\begin{align*}
\tfrac{d^2}{dx^2}\psi(x)+\tfrac{d}{dx}\psi(x)+2\psi(x)=0,
\end{align*}
we get
\begin{align*}
\psi(x)\simeq c_1\exp(-x/2)\sin(\omega x-c_2)
\end{align*}
as $x\rightarrow\infty$, with constants $c_1,c_2\in\R$ and $\omega=\tfrac{1}{2}\sqrt{7}$.
As $v\rightarrow\infty$, the functions
$$\varphi_v:=r_v-\pi$$ converge to $\psi$ in $C^1(\R)$, which is proved as in Lemma\,3.3 in \cite{ga}.
This in turn implies the claim.
 \end{proof}

We now prove Theorem\,C: we show that for each $k\in\N$ there exist
a solution of the ODE with nodal number $k$. 



\begin{theorem}
\label{infam}
For each $k\in\N_0$ there exists a solution $r_v$ of the BVP with $\frak{N}(r_v)=k$.
Infinitely many of these solutions have Brouwer degree of absolute value greater or equal to three.
\end{theorem}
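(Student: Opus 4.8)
The plan is to use the shooting method at $t=0$ (equivalently $x=-\infty$) together with a continuity-plus-intermediate-value argument. For each $v\geq 0$ we have by Theorem \ref{ivp} the unique solution $r_v$ of the ODE with $r_v(x)\simeq v\exp(x)$ as $x\to-\infty$, depending continuously on $v$; and by Lemma \ref{limit3} the limit $\lim_{x\to\infty}r_v(x)$ is either of the form $k\tfrac{\pi}{2}$ for some $k\in\Z$ or $\pm\infty$. First I would rule out the case $\lim_{x\to\infty}r_v(x)=j\pi$: by Lemma \ref{streifen} such an $r_v$ would oscillate infinitely often around $j\pi$, hence $r_v$ would cross $\pi$ infinitely often if $j=1$ (and if $j\neq 1$ one must argue that this forces infinitely many crossings of $\pi$ too, or that it cannot be a BVP solution because the required boundary value is a half-integer multiple of $\pi$ that is not an integer multiple of $\pi$) — in any case, for the construction of BVP solutions we only need that a solution with a finite, \emph{odd}-half-integer limit $(2\ell+1)\tfrac{\pi}{2}$ is automatically a BVP solution. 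Combining with the negative-$x$ analysis (Lemma \ref{dm}, which guarantees $\lim_{x\to-\infty}r_v(x)=0$ for these shooting solutions), each $r_v$ that does not run off to $\pm\infty$ and does not limit to an integer multiple of $\pi$ is a genuine solution of the BVP.

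The heart of the argument is then a counting/continuity step. Define, for those $v$ for which $\lim_{x\to\infty}r_v(x)$ is finite, an integer-valued invariant — essentially the nodal number $\mathfrak{N}(r_v)$, the number of intersections of $r_v$ with $\pi$, or equivalently a suitable index recording which horizontal line $m\tfrac{\pi}{2}$ the solution terminates on. The function $r_0\equiv 0$ has $\mathfrak{N}=0$, and $r_1(x)=\arctan\exp x$ solves the BVP with $\mathfrak{N}(r_1)=0$. By Lemma \ref{nullstellen}, for each $k$ there is $c(k)$ with $\mathfrak{N}(r_v)\geq k$ once $v>c(k)$. Now I would run the standard shooting dichotomy: as $v$ increases from $0$, the solutions $r_v$ vary continuously in $C^1_{loc}$; the nodal number can only change as $v$ crosses a value $v_*$ at which $r_{v_*}$ is \emph{tangent} to the line $\pi$ (i.e. touches $\pi$ with zero derivative) or at which $r_{v_*}$ escapes to $\pm\infty$. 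A tangency $r(x_0)=\pi$, $r'(x_0)=0$ is excluded by Picard–Lindelöf (it would force $r\equiv\pi$, contradicting $r(-\infty)=0$), exactly as in Lemma \ref{increase2}. The escape values form an open set by continuous dependence, and on each maximal open interval of "good" $v$ the nodal number is constant; at the boundary of such an interval the solution must limit to some $m\tfrac{\pi}{2}$. The key monotonicity input — that $\mathfrak{N}$ is non-decreasing in $v$ on the relevant range, or at least that it takes all values $0,1,2,\dots$ — then follows by tracking how, as $v$ passes an escape interval, the reentering solution has picked up exactly one more crossing of $\pi$; this is where the Lyapunov function $W$ and the a priori derivative bounds (Lemmas \ref{bounded2}, \ref{bound}, \ref{dp}, \ref{limit2}) are used to control the behavior on both sides of the escape region and to show the invariant jumps by exactly one. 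Hence every value $k\in\N_0$ is attained by some $\mathfrak{N}(r_v)$, and each such $r_v$ is a BVP solution by the first paragraph.

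For the second assertion, I would combine the above with the explicit relation $\deg\psi_{r_v}=2\ell+1$ where $r_v(\tfrac{\pi}{2})=(2\ell+1)\tfrac{\pi}{2}$: the limit value $m\tfrac{\pi}{2}=\lim_{x\to\infty}r_v(x)$ determines $\ell$, and since the nodal number $\mathfrak{N}(r_v)$ counts crossings of $\pi$, a solution with large nodal number must have terminal value far from $\pi$, hence $|\ell|$ (and with it $|\deg\psi_{r_v}|$) grows. Concretely, once $\mathfrak{N}(r_v)$ is large the graph of $r_v$ has swept through many of the bands between consecutive multiples of $\tfrac{\pi}{2}$, so its terminal band index is correspondingly large; in particular $|\deg\psi_{r_v}|\geq 3$ for all but finitely many of the constructed $k$. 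Since there are infinitely many $k$, infinitely many of the constructed solutions have $|\deg\psi_{r_v}|\geq 3$.

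I expect the main obstacle to be the monotonicity/continuity bookkeeping in the second paragraph: making precise that the nodal invariant changes by exactly one as $v$ traverses an "escape interval", and that no value is skipped. This requires carefully pasting together the positive-$x$ estimates (the $d^+$ analysis, Lemmas \ref{dp}–\ref{streifen}) with continuous dependence on $v$, and handling the boundary behavior where $r_v$ limits to a half-integer or integer multiple of $\pi$ rather than escaping. The asymptotic analysis near $x\to\infty$ from Lemma \ref{nullstellen} (the damped-oscillator comparison with decay rate $\tfrac12$ and frequency $\tfrac12\sqrt7$) is the tool that makes the "one extra crossing per escape" claim rigorous.
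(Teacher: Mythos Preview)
Your approach to the first assertion is essentially the paper's: both use the shooting family $r_v$, continuous dependence, the impossibility of tangential contact with $\pi$ (via Picard--Lindel\"of), Lemma~\ref{nullstellen} to force the nodal number to grow, and the Lyapunov machinery (Lemmas~\ref{bounded2}--\ref{streifen}) to control the transition values. The paper organizes this by setting $v_k=\sup\{v:\mathfrak{N}(r_v)=k\}$ and showing directly that $r_{v_k}$ is a BVP solution with nodal number exactly $k$; your ``escape interval'' language is a reparametrization of the same idea, and you correctly flag the jump-by-one bookkeeping as the delicate point.

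Your argument for the second assertion, however, is wrong. You claim that ``a solution with large nodal number must have terminal value far from $\pi$, hence $|\ell|$ grows'' and that large $\mathfrak{N}(r_v)$ means the graph ``has swept through many of the bands between consecutive multiples of $\tfrac{\pi}{2}$''. This contradicts Theorem~A, which shows $\deg\psi_r\in\{\pm1,\pm3,\pm5,\pm7\}$ for \emph{every} BVP solution: the terminal value is uniformly bounded, no matter how large the nodal number. The nodal number counts crossings of the \emph{single} line $y=\pi$, not distinct bands visited; a solution can oscillate about $\pi$ arbitrarily often and still terminate at $\tfrac{3\pi}{2}$.

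The correct argument is a parity observation. Since $r_v(-\infty)=0<\pi$ and every crossing of $\pi$ is transversal (a tangency forces $r_v\equiv\pi$), a solution with \emph{odd} nodal number ends up on the side $r_v>\pi$ after its last crossing. Its terminal value $(2\ell+1)\tfrac{\pi}{2}$ therefore satisfies $2\ell+1\geq 3$, so $|\deg\psi_{r_v}|\geq 3$. As the construction yields one BVP solution for every $k\in\N_0$ and infinitely many $k$ are odd, the claim follows.
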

\begin{proof}
The strategy of the proof is to show that for each $k\in\N$ the function $r_{v_k}$, with $v_k=\mbox{sup}\left\{v\,\lvert\,\frak{N}(r_{v})=k\right\}$, is a solution of the BVP with nodal number $k$. 

\smallskip

\textit{First Step: consider $r_{v_0}$.}\\
The function $r_1(x)=\arctan\exp x$ solves the BVP with $\frak{N}(r_1)=0$.
Consequently, $v_0=\mbox{sup}\left\{v\,\lvert\,\frak{N}(r_v)=0\right\}$ is well-defined and
Lemma\,\ref{nullstellen} implies $v_0<\infty$.\\ 
We prove $\frak{N}(r_{v_0})=0$ by contradiction, i.e., we
assume that there exists a point $x_0\in\R$ with $r_{v_0}(x_0)=\pi$.
We have $r_{v_0}'(x_0)\neq 0$ since otherwise $r\equiv \pi$ which contradicts our assumption. Consequently, $r_{v_0}-\pi$ has opposite signs in the intervals $(-\infty,x_0)$ and $(x_0,\infty)$, respectively.
Since $r_v$ depends continuously on $v$ there exists a sequence $(c_i)_{i\in\N}$ with
$c_i<v_0$, $\lim_{i\rightarrow\infty}c_i=v_0$ and $\frak{N}(r_{c_i})=0$. Thus each of the functions $r_{c_i}-\pi$ has a different sign than $r_{v_0}-\pi$ on the interval $(x_0,\infty)$. This contradicts the fact that $r_v-\pi$ depends continuously on $v$.
Consequently, $\frak{N}(r_{v_0})=0$.

\smallskip

\textit{Second Step: there exists $\epsilon>0$ such that $\frak{N}(r_v)=1$ for $v\in \left(v_0,v_0+\epsilon\right)$.}\\
Recall that there cannot exist a point $x_0\in\R$ such that $r_v(x_0)=\pi$ and $r'_v(x_0)=0$. 
Since $r_v$ depends continuously on $v$, an additional node can thus only arise at infinity, i.e., there exists $\epsilon>0$ such that
$r_v-\pi$ has at least one zero $z_1(v)$ for each $v\in \left(v_0,v_0+\epsilon\right)$ and $\lim_{v\searrow v_0}z_{1}(v)=\infty$.  
Clearly, $r_v'(z_1(v))>0$.

\smallskip

Let $v\in \left(v_0,v_0+\epsilon\right)$ and $x_1\in\R$ such that $\tanh(x)\geq 0.99$ for all $x\geq x_1$.
Lemma\,\ref{limit3} implies that $\lim_{x\rightarrow\infty}r_{v_0}(x)=j\tfrac{\pi}{2}$ with $j\in\Z, j\leq 1$ or $\lim_{x\rightarrow\infty}r_{v_0}(x)=-\infty.$
Consequently, for $\delta\in (0,\tfrac{\pi}{2})$ we can choose the above $\epsilon$ so small that $\mbox{min}\,\lbrace \lvert r_v(x)-\tfrac{\pi}{2}\lvert\,\,\mid {x\geq x_1}\rbrace\leq\delta$.
Note that the minimum exists since we can minimize over the compact interval $\lbrack x_1,z_1(v)\rbrack$.
In other words, we can choose $\epsilon$ so small that $r_v$ becomes arbitrary close to $\tfrac{\pi}{2}$ on the interval $\lbrack x_1,z_1(v)\rbrack$.
Let $x(v)\geq 0$ be such that $\lvert r_v(x(v))-\tfrac{\pi}{2}\lvert\leq\delta$. Clearly, $x(v)<z_1(v)$.

\smallskip

By Lemma\,\ref{increase2} we have $W(z_1(v))>W(x(v))$ which implies $$r_v'(z_1(v))^2>2W(x(v)).$$
We choose $\delta>0$ so small that
\begin{align*}
r_v'(z_1(v))\geq 1.1.
\end{align*}
By the ODE we thus get $r_v''(x)>0$ for all $x\geq z_1(v)$, i.e., these solution all have nodal number equal to one.
Consequently, we have shown that there exists an $\epsilon>0$ such that $\frak{N}(r_v)=1$ for $v\in \left(v_0,v_0+\epsilon\right)$.
Furthermore, $v_1=\mbox{sup}\left\{v\,\lvert\,\frak{N}(r_{v})=1\right\}$ is well-defined and $v_1>v_0$. Lemma\,\ref{nullstellen} implies $v_1<\infty$. 

\smallskip

\textit{Third Step: proceed inductively.}\\
Lemma\,\ref{nullstellen} implies that for each $k\in\N$ the number $v_k=\mbox{sup}\left\{v\,\lvert\,\frak{N}(r_{v})=k\right\}$ is well-defined and in particular finite.
Furthermore, as in Step $2$ we prove $v_k>v_{k-1}$. Analogously to the considerations for $v_1$ we prove that $\varphi_{v_k}$ has exactly $k$ zeros and that there exists $\epsilon_k>0$ such that each $\varphi_v$, $v\in\left(v_{k},v_{k}+\epsilon_{k}\right)$,
has exactly $k+1$ zeros.

\smallskip

\textit{Fourth Step: for each $k\in\N_0$, $r_{v_k}$ is a solution of the BVP.}\\
Since $\frak{N}(r_{v_k})=k$, Lemma\,\ref{streifen} implies that $\lim_{x\rightarrow\infty}r_k(x)=j\pi$ for an $j\in\Z$ is not possible.
Consequently, Lemma\,\ref{limit3} implies that there exists $\ell_0\in\Z$ such that $\lim_{x\rightarrow \infty}r_{v_i}(x)=\ell_0\pi+\frac{\pi}{2}$ or $\lim_{x\rightarrow \infty}r_{v_i}(x)=\pm\infty$.

\smallskip

Below we assume that the later case occurs. We may assume without loss of generality $\lim_{x\rightarrow \infty}r_{v_k}(x)=-\infty$.
Recall $\frak{N}(r_{v_k})=k$ and that there exists an $\epsilon_k>0$ such that $\frak{N}(r_{v})=k+1$ for $v\in\left(v_k,v_k+\epsilon_k\right)$.
Similarly as in Step $2$ we prove that we can choose $\epsilon_k>0$ such that $\lim_{x\rightarrow \infty}\varphi_{v}(x)=\infty$
for $v\in\left(v_{k},v_{k}+{\epsilon_{k}}\right)$.

\smallskip

On the other hand, the fact that $\varphi_{v}$ depends continuously on $v$ implies that
for each $v\in\left(v_{k},v_{k}+{\epsilon_{k}}\right)$ there exist $k_0\in\Z$ and $x_{k_0}>d^{+}$ such that $\varphi_v(x_{k_0})=(4k_0+3)\tfrac{\pi}{2}$ and
$\varphi_v'(x_{k_0})<0$. Lemma\,\ref{dp} thus implies $\lim_{x\rightarrow -\infty}\varphi_{v}(x)=-\infty$, which contradicts the results of the preceding paragraph.
Consequently, there exists $\ell_0\in\Z$ such that $\lim_{x\rightarrow \infty}r_{v_k}(x)=\ell_0\pi+\tfrac{\pi}{2}$ and thus each $r_{v_k}$, $k\in\N$, is a solution of the BVP with nodal number $k$.
This proves the first claim.

\smallskip

The second claim is an immediate consequence of the above construction and Theorem\,3.4 in \cite{puttmann}.
\end{proof}

\section{Limit configuration}
\label{sec4}
After providing one preparatory lemma we show Theorem\,D. 

\smallskip

For any solution $r$ of the ODE we define the function $W_{-}:\R\rightarrow\R$ by
\begin{align*}
W_{-}(x)=\tfrac{1}{2}r'(x)^2-\tfrac{1+\tanh x}{2}\cos^2(r(x))+\sqrt{2}(1-\tanh x)^{\tfrac{3}{2}}\cos^2(\tfrac{1}{2}r(x)).
\end{align*}
The function $W_{-}$ decreases strictly for $x\leq 0$.
First we show that for every interval of the form $\lbrack x_0, d^-\rbrack$, the energy $W_{-}$ of $r_v$ becomes arbitrarily small on this interval
if we chose the velocity $v$ to be \lq large enough\rq. Keep in mind that the energy $W_{-}$ of $r_v$ depends on $v$.

\begin{lemma}
\label{bigv}
For $\epsilon>0$ and $x_0\leq d^{-}$ there exists $v_0>0$ such that the energy $W_{-}$ of $r_v$ satisfies $W_{-}(x)<\epsilon$ for $x_0\leq x\leq d^{-}$ and $v\geq v_0$.
\end{lemma}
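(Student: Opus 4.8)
The plan is to control the solution $r_v$ on the interval $[x_0, d^-]$ by a two-part argument: first, use Theorem~\ref{ivp} and the asymptotic $r_v(x)\simeq v\exp(x)$ as $x\to-\infty$ to show that on a \emph{fixed} compact interval the solution $r_v$ together with $r_v'$ can be made uniformly small (in sup-norm) by taking $v$ large --- but wait, this is the opposite of what one wants, since the asymptotic says $r_v$ is large where $v$ is large. So instead I would argue the reverse: fix a point $x_1 < x_0$ far enough to the left. Since $W_-$ decreases strictly for $x\le 0$, it suffices to bound $W_-(x_1)$ from above and conclude $W_-(x)\le W_-(x_1)$ for all $x\in[x_1,0]\supseteq[x_1,d^-]$. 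Hence the whole problem reduces to: \emph{given $\epsilon>0$, choose $x_1$ and then $v_0$ so that $W_-(x_1)<\epsilon$ for all $v\ge v_0$.}

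To get the bound on $W_-(x_1)$ I would exploit that $r_v$ is small near $-\infty$ in the following precise sense: for the first-order system obtained in the proof of Theorem~\ref{ivp} (in the variable $s=t^2$, or equivalently directly in $x$ using $r_v(x)\simeq v e^x$, $r_v'(x)\simeq v e^x$), the solution depends continuously on the initial data and, more importantly, $r_v(x_1)$ and $r_v'(x_1)$ are of order $v e^{x_1}$ for $x_1$ very negative. Now observe the structure of $W_-$: as $x\to-\infty$ we have $\tanh x\to -1$, so $\tfrac{1+\tanh x}{2}\to 0$ and $\sqrt{2}(1-\tanh x)^{3/2}\to 2\sqrt{2}\cdot 2^{1/2}\cdot\ldots$ --- the coefficient $(1-\tanh x)^{3/2}\to 2^{3/2}$, and $\cos^2(\tfrac12 r_v(x))\to\cos^2 0=1$ if $r_v(x)\to 0$. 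So $W_-(x)\to \tfrac12 r_v'(x)^2 + (\text{something like } 4)$, which does \emph{not} go to zero; rather $W_-(-\infty)=4$ roughly. This means my naive reduction is wrong, and $W_-$ is NOT small near $-\infty$; the quantity $W_-$ measures "distance from $r\equiv \tfrac{3\pi}{2}$-type configuration," and near $-\infty$ where $r\approx 0$ it is bounded below. I should reconsider.

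The correct mechanism, I believe, is this: one wants $W_-$ \emph{small on $[x_0,d^-]$}, and since $W_-$ is \emph{decreasing} for $x\le 0$, small-ness at the \emph{right} endpoint $d^-$ propagates leftward is false --- decreasing means small at $d^-$ does not force small at $x_0<d^-$; rather large at $x_0$ is allowed. So actually one needs $W_-(x_0)<\epsilon$, i.e. smallness at the \emph{left} endpoint $x_0$, and then monotonic decrease gives $W_-(x)\le W_-(x_0)<\epsilon$ on all of $[x_0,d^-]$. So the real task is: \emph{for fixed $x_0\le d^-$, make $W_-(x_0)<\epsilon$ by choosing $v$ large.} For this I would use that, by Theorem~\ref{infam}/Theorem~D's companion reasoning, for large $v$ the solution $r_v$ is forced to be close to $\pi$ on compact subsets of $(0,\pi/2)$; but near $x_0$ (which is negative, corresponding to $t$ near $0$) one instead uses the shooting asymptotics. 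Concretely: evaluate $W_-$ at the point where $r_v$ first reaches, say, $\pi/2$ or use the conserved-quantity estimates of Lemma~\ref{bound22}/Lemma~\ref{bound2}. The cleanest route: show $r_v(x_0)\to\pi$ and $r_v'(x_0)\to 0$ as $v\to\infty$ is too strong for $x_0$ fixed negative; rather, I expect the intended argument is that $r_v(x_0)$ passes through $\pi$ and beyond, and one tracks $W_-$ across a node of $r_v-\pi$, using that at such a node $\cos^2(r_v)=1$ but the other terms are controlled, combined with strict monotonicity of $W_-$ to absorb the error.

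\textbf{Main obstacle.} The crux is finding the right quantity that genuinely becomes small at the left endpoint $x_0$ as $v\to\infty$, reconciling it with the fact that near $x=-\infty$ the solution $r_v$ is small (so $r_v\approx 0$, where $\cos^2 r_v\approx 1$), which a priori makes $W_-(x_0)$ bounded below, not small. I expect the resolution is that for large $v$ the solution $r_v$ has already swept past $\pi$ well to the left of $x_0$ (the node $z_1(v)\to -\infty$?), so at $x_0$ we in fact have $r_v(x_0)$ near $\pi$ rather than near $0$; then $\cos^2(\tfrac12 r_v(x_0))\approx\cos^2(\pi/2)=0$ and $\cos^2(r_v(x_0))\approx 1$, and one needs $\tfrac{1+\tanh x_0}{2}$ to be small or to cancel --- which for $x_0$ negative and fixed it is not. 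Honestly, I suspect the proof instead proceeds by integrating the derivative identity $\tfrac{d}{dx}W_-$ and using the $C^1$-convergence $r_v - \pi \to 0$ on compacta (Theorem~D, or its proof via Lemma~\ref{nullstellen}-type comparison) to make $\int$ small; that would be the step requiring the most care, namely transferring the large-$v$ limit-configuration behavior known on $(0,\pi/2)$ down to the fixed negative point $x_0$ via the comparison-ODE argument of Lemma~\ref{nullstellen}.
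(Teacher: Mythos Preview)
Your proposal is not a proof but an exploration, and it misses the mechanism the paper actually uses. Two concrete issues:

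\textbf{A sign slip.} When $r_v(x_0)\approx\pi$ you correctly get $\cos^2\bigl(\tfrac12 r_v(x_0)\bigr)\approx 0$ and $\cos^2 r_v(x_0)\approx 1$, but the corresponding term in $W_-$ is $-\tfrac{1+\tanh x_0}{2}\cos^2 r_v(x_0)$, which is \emph{negative}. It therefore helps, not hurts; there is nothing to ``cancel.'' With $r_v(x_0)\approx\pi$ and $r_v'(x_0)\approx 0$ one in fact gets $W_-(x_0)\approx -\tfrac{1+\tanh x_0}{2}<0<\epsilon$.

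\textbf{Circularity and the missing idea.} The real problem is establishing $r_v(x_0)\approx\pi$ and $r_v'(x_0)\approx 0$ at a \emph{fixed} $x_0$. Your last paragraph proposes to invoke the $C^1$-convergence of $r_v-\pi$ on compacta, i.e.\ Theorem~D; but Lemma~\ref{bigv} is precisely the lemma used to prove Theorem~D, so this is circular. The paper breaks the circle by evaluating $W_-$ not at the fixed $x_0$ but at a \emph{moving} point $x_*=x_2-\log v\to-\infty$. At such a point one has independent control: the rescaled convergence $\varphi_v(\,\cdot\,-\log v)\to\psi$ in $C^1$ (the comparison-ODE input from Gastel, already used in Lemma~\ref{nullstellen}) together with $\psi(x)\to 0$, $\psi'(x)\to 0$ as $x\to\infty$ lets one choose $x_2$ large so that $r_v(x_*)\approx\pi$ and $r_v'(x_*)\approx 0$; and since $x_*\to-\infty$ one also has $1+\tanh x_*\to 0$. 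All three terms of $W_-(x_*)$ are then small, hence $W_-(x_*)<\epsilon$. For large $v$ one has $x_*\le x_0$, and the strict decrease of $W_-$ on $(-\infty,0]$ propagates this to $W_-(x)<\epsilon$ on $[x_0,d^-]$. You gestured at a node moving to $-\infty$, which is the right intuition, but the precise tool is this rescaled $C^1$-limit, and without it the argument does not close.
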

\begin{proof}
Since $\lim_{x\rightarrow -\infty}r_v'(x)=0$, there exists $x_1\leq d^{-}$ such that $r_v'(x)^2<\epsilon$ for $x\leq x_1$.
Furthermore, by the proof of Lemma\,3.3 in \cite{ga} we get
\begin{align}
\label{limit}
\lim_{v\rightarrow\infty}\varphi_v(x-\log v)=\psi(x)
\end{align}
for all $x\in\R$, where
 $\psi:\R\rightarrow\R$ is the unique solution of the differential equation
\begin{align*}
\tfrac{d^2}{dx^2}\psi(x)+\tfrac{d}{dx}\psi(x)+2\sin\psi(x)=0,
\end{align*}
satisfying $\psi(x)\simeq -\pi+\exp(vx)$ as $x\rightarrow -\infty$. Recall that we have defined $\varphi_v:=r_v-\pi$.
From \cite{ga} we further have $\lim_{x\rightarrow\infty}\psi(x)=0$.
Consequently, for a given $\epsilon_0>0$ there exists $x_2\in\R$ such that $2\lvert\psi(x_2)\lvert<\epsilon_0$. 
By (\ref{limit}) there thus 
exists an $v_0\in\R$ such that $\lvert \varphi_v(x_2-\log v)\lvert<\epsilon_0$ for all $v\geq v_0$.
We furthermore assume that $v_0$ is chosen such that $1+\tanh(x_2-\log v)<2\epsilon_0$ 
and $x_2-\log v_0\leq\min(x_0,x_1)$ for all $v\geq v_0$.
We chose $\epsilon_0>0$ so small that $W_-(x_2-\log v)-\tfrac{1}{2}r_v'(x_2-\log v)^2<\tfrac{1}{2}\epsilon$ for all $v\geq v_0$.
Thus we get $W_-(x_2-\log v)<\epsilon$ for $v\geq v_0$.
Since $W_{-}$ decreases strictly on the negative $x$-axis, we obtain the claim.
\end{proof}

We now show Theorem\,D, i.e., we verify that $\left(\varphi_v(x),\varphi_v'(x)\right)$ stays close to zero for bounded $x\geq d^{-}$ provided that $v$ is chosen large enough.
The proof of this result follows Lemma\,4 in \cite{BC}. 
As in \cite{BC} we introduce the distance function 
$$\rho_v:\R\rightarrow\R,\, x\mapsto\sqrt{\varphi_v(x)^2+\varphi_v'(x)^2},$$ which clearly satisfies $\rho_v>0$.

\begin{theorem}
\label{stayclose}
For any finite interval $I\subset \R$ and $\eta>0$, there exists $v_0\in\R$ such that $v\geq v_0$ implies $\rho_v(x)<\eta$ for $x\in I$.
\end{theorem}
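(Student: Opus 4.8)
The plan is to analyze the distance function $\rho_v$ directly, following the strategy of Lemma~4 in \cite{BC}. The starting observation is that $\varphi_v = r_v - \pi$ satisfies a linear inhomogeneous-looking equation whose coefficients are bounded on all of $\R$: rewriting the ODE in the variable $x$ gives
\begin{align*}
\varphi_v''(x) = \tanh x\cdot\varphi_v'(x) - \tfrac{1+\tanh x}{2}\sin 2\varphi_v(x) + \tfrac{1}{\sqrt 2}(1-\tanh x)^{\frac{3}{2}}\sin\varphi_v(x),
\end{align*}
where I have used $\sin 2r = \sin(2\varphi_v+2\pi) = \sin 2\varphi_v$ and $\sin r = \sin(\varphi_v+\pi) = -\sin\varphi_v$. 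The point is that $|\tanh x|\le 1$ and the nonlinear terms are globally Lipschitz in $\varphi_v$ with a uniform constant, say $L$, that does not depend on $v$. First I would compute, for $x$ in a range where $\varphi_v$ is not yet known to be small, the derivative of $\tfrac{1}{2}\rho_v^2 = \tfrac{1}{2}(\varphi_v^2+\varphi_v'^2)$ and bound it: using the ODE,
\begin{align*}
\tfrac{d}{dx}\bigl(\tfrac{1}{2}\rho_v(x)^2\bigr) = \varphi_v\varphi_v' + \varphi_v'\varphi_v'' = \varphi_v'\bigl(\varphi_v + \varphi_v''\bigr),
\end{align*}
and since $\varphi_v + \varphi_v'' = \tanh x\,\varphi_v' + \varphi_v + (\text{Lipschitz terms bounded by } L|\varphi_v|)$, one gets a differential inequality of the form $\bigl|\tfrac{d}{dx}\rho_v^2\bigr| \le C\,\rho_v^2$ with $C$ independent of $v$. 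Gr\"onwall's inequality then controls $\rho_v$ on the finite interval $I$ in terms of its value at a single left endpoint.

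The second ingredient is to produce a point $x_\ast$, slightly to the left of $I$ (in fact at or below $d^-$), at which $\rho_v(x_\ast)$ is already as small as we like once $v$ is large. This is exactly what Lemma~\ref{bigv} delivers: for $x_0 \le d^-$ and $v$ large, $W_-$ of $r_v$ is smaller than any prescribed $\epsilon$ on $[x_0,d^-]$, and since $W_-(x) = \tfrac{1}{2}r_v'(x)^2 - \tfrac{1+\tanh x}{2}\cos^2 r_v(x) + \sqrt 2(1-\tanh x)^{\frac 32}\cos^2(\tfrac{1}{2}r_v(x))$, smallness of $W_-$ together with smallness of $r_v'$ forces $\cos^2(\tfrac 12 r_v)$ small, hence $r_v$ close to an odd multiple of $\pi$, i.e.\ $\varphi_v$ close to an even multiple of $\pi$; combined with the sign/continuity bookkeeping already established (the solutions under consideration have $\lim_{x\to-\infty} r_v = 0$ and the graph is trapped in the strip $-2\pi < r_v < 2\pi$ for $x < d^-$ by Lemma~\ref{dm}), this pins $\varphi_v$ near $0$ rather than near $\pm 2\pi$. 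So I would fix $x_\ast := \min(x_0, \inf I) - 1$ or similar, apply Lemma~\ref{bigv} on $[x_\ast, d^-]$, and conclude $\rho_v(x_\ast) < \delta$ for all $v \ge v_0(\delta)$.

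Putting the two pieces together: choose the interval $[x_\ast, \sup I]$, which is finite; on it $\rho_v$ satisfies the uniform Gr\"onwall bound $\rho_v(x) \le \rho_v(x_\ast)\, e^{C(\,\sup I - x_\ast)}$; the exponential factor is a fixed constant $K$ depending only on $I$, not on $v$. Given $\eta > 0$, apply Lemma~\ref{bigv} with $\delta = \eta/K$ to obtain $v_0$ so that $v \ge v_0$ forces $\rho_v(x_\ast) < \eta/K$, and therefore $\rho_v(x) < \eta$ for all $x \in [x_\ast, \sup I] \supset I$. That completes the argument.

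I expect the main obstacle to be the \emph{bookkeeping in the Gr\"onwall step near the degenerate point} $x=d^-$ and, more precisely, making sure the constant $C$ in the differential inequality genuinely does not depend on $v$ — the nonlinear terms are fine because $\sin$ is globally Lipschitz, but one has to be a little careful that the factor $(1-\tanh x)^{3/2}$ stays bounded (it does, it is $\le 2^{3/2}$) and that no hidden $v$-dependence sneaks in through the interval of integration. A secondary subtlety is the sign argument that identifies the limit of $\varphi_v(x_\ast)$ as being near $0$ and not near $\pm 2\pi$; here I would lean on Lemma~\ref{dm} together with the fact that $r_v \simeq v\exp(x)$ as $x\to-\infty$ is positive and small, so by continuity $r_v$ cannot have reached $2\pi$ by $x = d^-$ unless it first crossed $\pi$, and the convergence-to-$\psi$ picture from \cite{ga} controls exactly where the first crossings sit.
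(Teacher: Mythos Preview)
Your overall strategy---Gr\"onwall on $\rho_v^2$ combined with Lemma~\ref{bigv} to make the initial value $\rho_v(x_\ast)$ small---is exactly the paper's approach, and your derivation of the differential inequality $\bigl|\tfrac{d}{dx}\rho_v^2\bigr|\le C\rho_v^2$ with $C$ independent of $v$ is correct.

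The gap is in the step where you ``pin $\varphi_v$ near $0$ rather than near $\pm 2\pi$.'' Lemma~\ref{bigv} together with the smallness of $1+\tanh x$ at $x_\ast$ only tells you that $r_v'(x_\ast)$ is small and $\cos^2\bigl(\tfrac12 r_v(x_\ast)\bigr)$ is small; combined with Lemma~\ref{dm} this forces $r_v(x_\ast)$ to be close to $\pi$ \emph{or} to $-\pi$. Your explicit argument (``$r_v$ cannot have reached $2\pi$ unless it first crossed $\pi$'') addresses the wrong alternative: reaching $2\pi$ is already excluded by Lemma~\ref{dm}, and nothing you wrote prevents $r_v(x_\ast)$ from sitting near $-\pi$. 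Starting from $r_v\simeq v e^x>0$ at $-\infty$ does not by itself stop $r_v$ from dipping to $-\pi$ somewhere on the long interval before $x_\ast$, and the convergence $\varphi_v(\cdot-\log v)\to\psi$ holds only on compact sets of the shifted variable, so it does not directly control $r_v$ at the fixed point $x_\ast$.

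The paper closes this gap differently and less directly than you suggest: it carries both alternatives forward (defining $\hat\varphi_v=r_v+\pi$ and $\hat\rho_v$ in parallel), observes that by continuity in $v$ exactly one of $\rho_v<\eta$ or $\hat\rho_v<\eta$ holds on $I$ for \emph{all} $v\ge v_0$, and then rules out the $\hat\rho_v$ alternative by invoking Theorem~\ref{infam} (existence of BVP solutions $r_{v_k}$ with odd nodal number for arbitrarily large $v_k$) together with Lemma~\ref{dp}. A more direct repair along the lines you were reaching for is possible: the proof of Lemma~\ref{bigv} already produces a point $x_2-\log v$ at which $\varphi_v$ is close to $0$, and since $W_-$ and $1+\tanh x$ are both small on all of $[x_2-\log v,\,x_\ast]$, so is $\cos^2\bigl(\tfrac12 r_v\bigr)$; continuity then prevents $r_v$ from transitioning from near $\pi$ to near $-\pi$ on that interval. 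Either way, this step needs an explicit argument rather than the appeal to ``sign/continuity bookkeeping.''
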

\begin{proof}
We assume without loss of generality that $I=\lbrack x_0,x_1\rbrack$ where $x_0,x_1\in\R$ with $x_0\leq x_1$.
The ODE and $\varphi_v'(x)^2\leq \rho_v(x)^2$, $2\lvert\varphi_v(x)\varphi_v'(x)\lvert\leq \rho_v(x)^2$ imply that there exists a constant $c>0$ such that
\begin{align*}
\rho_v(x)\rho_v'(x)\leq c\rho_v(x)^2.
\end{align*}
Thus we get $\tfrac{\rho_v'(x)}{\rho_v(x)}\leq c$ and integrating this inequality from a given $T_-\leq\min(x_0,d^{-})$ to a point $x\geq T_-$ implies
\begin{align}
\label{ine}
\rho_v(x)\leq \exp(c(x-T_{-}))\rho_v(T_-).
\end{align}
Let $\epsilon>0$ be given and $x_2\in\R$ such that $1+\tanh x\leq\epsilon$ for $x\leq x_2$.
In what follows we assume that $T_-$ satisfies $T_-\leq\min(x_0,x_2,d^{-})$.
Lemma\,\ref{bigv} guarantees the existence of a velocity $v_1>0$ such that $W_{-}(T_{-})<\tfrac{1}{2}\epsilon$ for all $v\geq v_1$. 
We thus obtain
\begin{align*}
\lvert r_v'(T_{-})\lvert<\sqrt{2 \epsilon}\hspace{0.2cm}\,\mbox{and}\hspace{0.2cm}\cos^{2}(\tfrac{1}{2}r_v(T_{-}))<\tfrac{1}{\sqrt{2}}(1-\tanh T_{-})^{-\tfrac{3}{2}}{\epsilon}
\end{align*}
for all $v\geq v_1$. 
From this we get that $ r_v'(T_{-})$ becomes arbitrarily small if $\epsilon$ converges to zero. Furthermore, $ r_v(T_{-})$ becomes arbitrarily close to $-\pi$ or $\pi$.

\smallskip

Let us first assume that the latter case occurs. Hence, for any $T_+\geq\max(x_1,d^{-})$ and $\eta>0$ there exists a velocity $v_2>0$ such that $$\rho_v(T_{-})<\exp(-c(T_+-T_{-}))\eta$$ for all $v\geq v_2$. Substituting this into (\ref{ine}) yields $\rho_v(x)<\eta$ for $T_{-}\leq x\leq T_+$ and $v\geq v_0:=\max(v_1,v_2)$, whence the claim.

\smallskip

In what follows we assume that $ r_v(T_{-})$ becomes arbitrarily close to $-\pi$.
In this case we define $\hat{\varphi_v}:=r_v+\pi$ and $$\hat{\rho}_v:\R\rightarrow\R,\, x\mapsto\sqrt{\hat{\varphi}_v(x)^2+\hat{\varphi}_v'(x)^2}.$$
Similarly as above we prove that for any $x_0,x_1\in\R$ with $x_0\leq x_1$ and $\eta>0$, there exists $v_0\in\R$ such that $v\geq v_0$ implies $\hat{\rho}_v(x)<\eta$ for $x_0\leq x\leq x_1$.

\smallskip 

From the above considerations we get that for each $v\geq v_0$ we either have ${\rho}_v(x)<\eta$ or $\hat{\rho}_v(x)<\eta$ for $x_0\leq x\leq x_1$. 
Since $r_v$ depends continuously on $v\in\R$ we exactly one of the following two cases occurs
\begin{compactenum}[(i)]
\item ${\rho}_v(x)<\eta$ for all $v\geq v_0$ and for $x_0\leq x\leq x_1$; 
\item $\hat{\rho}_v(x)<\eta$ for all $v\geq v_0$ and for $x_0\leq x\leq x_1$.
\end{compactenum}
We assume that the second case occurs and choose $x_1\geq d^{+}$. 
By the proof of Theorem\,\ref{infam} there exists a velocity $v_4 \geq v_0$ such that $r_{v_4}$ is a solution of the BVP with odd nodal number.
Consequently, there has to exists a point $x_2\geq x_1\geq d^{+}$ with $r_{v_4}(x_0)=\tfrac{\pi}{2}$ and $r'(x_0)\geq 0$. Lemma\,\ref{dp} thus implies $\lim_{x\rightarrow\infty}r_{v_4}(x)=\infty$.
This contradicts the fact that $r_{v_4}$ is a solution of the BVP. Consequently, case (ii) does not occur.
\end{proof}

Note that the preceding theorem does not imply $\lim_{x\rightarrow\infty}r_v(x)=\pi$ for $v\geq v_0$!
The following corollary is a consequence of this theorem and Lemma\,\ref{limit2}.

\begin{corollary}
There exists a $v_0\in\R$ such that each solution $r_v$ of the BVP with $v_0\geq v_0$ has Brouwer degree $\pm 1$ or $\pm 3$.
\end{corollary}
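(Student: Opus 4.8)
The plan is to combine Theorem~\ref{stayclose} with Lemma~\ref{dp} and Lemma~\ref{limit2}. Recall that any solution $r_v$ of the BVP satisfies $\lim_{x\to\infty}r_v(x)=(2\ell+1)\tfrac{\pi}{2}$ for some $\ell\in\Z$, that this limit is finite (in particular $\neq\pm\infty$), and that $\deg\psi_{r_v}=2\ell+1$ by Theorem~3.4 in \cite{puttmann}. Hence it suffices to show that, for $v$ large enough, $\lim_{x\to\infty}r_v(x)\in\{\tfrac{\pi}{2},\tfrac{3\pi}{2}\}$; this is in fact sharper than the asserted $\deg\psi_{r_v}\in\{\pm1,\pm3\}$, since Theorem~\ref{stayclose} forces $r_v$ to be close to $\pi$ near $d^{+}$.

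First I would apply Theorem~\ref{stayclose} to a finite interval $I$ with $d^{+}\in I$, taking $\eta<\tfrac{\pi}{2}$: this produces $v_0$ such that $\lvert r_v(d^{+})-\pi\rvert<\tfrac{\pi}{2}$, hence $r_v(d^{+})\in(\tfrac{\pi}{2},\tfrac{3\pi}{2})$, for every $v\geq v_0$. Now fix $v\geq v_0$ with $r_v$ a solution of the BVP, and suppose first, for contradiction, that $\lim_{x\to\infty}r_v(x)\geq\tfrac{5\pi}{2}$. Since this limit is an odd multiple of $\tfrac{\pi}{2}$ that is $\geq\tfrac{5\pi}{2}$, we have $r_v(x)>\tfrac{3\pi}{2}$ for all large $x$, whereas $r_v(d^{+})<\tfrac{3\pi}{2}$; therefore the set $\{x\geq d^{+}:r_v(x)=\tfrac{3\pi}{2}\}$ is non-empty, closed and bounded, and has a largest element $x_0$. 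By the choice of $x_0$ we have $r_v(x)>\tfrac{3\pi}{2}$ for $x>x_0$, so $r_v'(x_0)\geq 0$. Lemma~\ref{dp}(ii) (with $k=0$) rules out $r_v'(x_0)\leq 0$, since $\lim_{x\to\infty}r_v(x)=-\infty$ would contradict the BVP; thus $r_v'(x_0)>0$, and Lemma~\ref{limit2}(2), using again that the limit is finite, forces $\lim_{x\to\infty}r_v(x)=\tfrac{3\pi}{2}$ --- a contradiction. The mirror argument at the level $\tfrac{\pi}{2}$ --- taking $x_0$ to be the largest element of $\{x\geq d^{+}:r_v(x)=\tfrac{\pi}{2}\}$, which is bounded precisely because the limit is $<\tfrac{\pi}{2}$, and invoking Lemma~\ref{dp}(i) and Lemma~\ref{limit2}(1) --- rules out $\lim_{x\to\infty}r_v(x)\leq-\tfrac{\pi}{2}$. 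Since the limit is an odd multiple of $\tfrac{\pi}{2}$, it must equal $\tfrac{\pi}{2}$ or $\tfrac{3\pi}{2}$, whence $\deg\psi_{r_v}\in\{1,3\}\subseteq\{\pm1,\pm3\}$.

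The only point that needs care is the bookkeeping at the extremal crossing $x_0$: one must check that the relevant crossing set is bounded --- this is exactly where the hypothesis that $r_v$ solves the BVP is used, since it guarantees that the limit is a prescribed odd multiple of $\tfrac{\pi}{2}$ rather than $\pm\infty$ or genuinely oscillatory behaviour --- and that the one-sided monotonicity of $r_v$ past $x_0$ determines the sign of $r_v'(x_0)$ up to the cases already excluded by Lemma~\ref{dp}. Apart from this, the argument is a direct application of Theorem~\ref{stayclose} and Lemmas~\ref{dp} and~\ref{limit2}, and introduces no new estimates.
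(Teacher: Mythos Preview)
Your argument is correct and follows essentially the same route the paper has in mind: it is the combination of Theorem~\ref{stayclose} with the mechanism already isolated in step~(3) of the proof of Theorem~\ref{brodeg}, namely that once $r_v(d^{+})$ is trapped in an interval $[(2k-1)\tfrac{\pi}{2},(2k+1)\tfrac{\pi}{2}]$, Lemmas~\ref{dp} and~\ref{limit2} force $\lim_{x\to\infty}r_v(x)\in\{(2k\pm1)\tfrac{\pi}{2}\}$ for any solution of the BVP. Your write-up simply unpacks that step explicitly at the levels $\tfrac{\pi}{2}$ and $\tfrac{3\pi}{2}$, and your observation that one in fact obtains $\deg\psi_{r_v}\in\{1,3\}$ (not merely $\{\pm1,\pm3\}$) is a genuine, if minor, sharpening that the paper's own argument also yields.
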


\section*{Acknowledgements}
It is a pleasure to thank Wolfgang Ziller for making me aware of the Theorem of Malgrange. Furthermore, I would like to thank him for the many conservations
during the last year and for the wonderful time I had at the University of Pennsylvania.

\nocite{*}


\begin{thebibliography}{GWZ}
%
\bibitem{abresch}
U.~Abresch, {\em Isoparametric hypersurfaces with four or six distinct principal curvatures}, Math.\ Ann. {\bf 264} (1983),
283--302.
%
\bibitem{BC}
P.~Bizon, T.~Chmaj, {\em Harmonic map between spheres}, Proc.\ Royal\ Soc.\ London\ Ser.\ A {\bf 453} (1997), 403--415.
%
\bibitem{cartan}
E.~Cartan, {\em Sur quelque familles remarquables d'hypersurfaces}, in\ C.\ R.\ Congres\ Math.\ Liege (1939),
30--41.
%
\bibitem{ding}
W.\,Y.~Ding, {\em Harmonic Hopf constructions between spheres}, Int.\ J.\ Math.\ {\bf 5} (1994), 849--860.
%
\bibitem{eellsl}
J.~Eells, L.~Lemaire, {\em A report on harmonic maps}, Bull.\ London\ Math.\ Soc.\ {\bf 10} (1978), 1-68.
%
\bibitem{eells2}
J.~Eells, L.~Lemaire, {\em Another report on harmonic maps}, Bull.\ London\ Math.\ Soc.\ {\bf 20} (1988), 385--524.
%
\bibitem{eells3}
J.~Eells, J.~Sampson, {\em Harmonic mappings of Riemannian manifolds}, Amer.\ J.\ Math. {\bf 86} (1964), 109--160.
%
\bibitem{er}
J.~Eells, A.~Ratto, {\em Harmonic maps and minimal immersions with symmetries}, Ann.\ of \ Math.\ Studies {\bf 130} (1993).
%
\bibitem{fkm}
D.~Ferus, H.~Karcher, H.\,F.~M\"unzner, {\em Cliffordalgebren und neue isoparametrische Hyperfl\"achen}, Math.\ Z. {\bf 177} (1981), 479--502.
%
\bibitem{haskins}
L.~Foscolo, M.~Haskins, {\em New $\Gtwo$-holonomy cones and exotic nearly K\"ahler structures on $\Sph^6$ and $\Sph^3\times\Sph^3$}, arXiv: 1501.07838.
%
\bibitem{ga}
A.~Gastel, {\em On the harmonic Hopf construction}, Proc.\ of\ the\ Am.\ Math.\ Soc. {\bf 132} (2003), 607-615.
%
\bibitem{ga2}
A.~Gastel, {\em Torus equivariant harmonic maps between spheres}, Topology {\bf 41} (2002), 213-227.
%
\bibitem{munzner}
H.\,F.~M\"unzner, {\em Isoparametrische Hyperfl\"achen in Sph\"aren},
Math.\ Ann. {\bf 251} (1980), 57-71.
%
\bibitem{m2}
H.\,F.~M\"unzner, {\em Isoparametrische Hyperfl\"achen in Sph\"aren II},
Math.\ Ann. {\bf 255} (1981), 215-232.
%
\bibitem{puttmann}
T.~P\"uttmann, {\em Cohomogeneity one manifolds and self-maps of nontrivial degree},
Transform.\ Groups {\bf 14} (2009), 225-247.
%
\bibitem{ps}
T.~P\"uttmann, A.~Siffert, {\em New equivariant harmonic maps between cohomogeneity one manifolds}, in preparation.
%
\bibitem{ratto}
A.~Ratto, {\em Harmonic maps of spheres and the Hopf construction}, Topology {\bf 28} (1989), 379--388. 
%
\bibitem{si}
A.~Siffert, {\em An infinite family of homogeneous polynomial self-maps of spheres}, Manuscripta Mathematica {\bf 144} (2014), 303--309.
%
\bibitem{si2}
A.~Siffert, {\em Infinite families of harmonic self-maps of spheres}, Journal of Differential Equations {\bf 260} (2016), pp. 2898--2925
%
\bibitem{smith}
R.\,T.~Smith, {\em Harmonic mappings of spheres},
Amer.\ J.\ Math. {\bf 97} (1975), 364-395.
%
%
%
\end{thebibliography}
\end{document}